\documentclass[10pt]{amsart}
\usepackage[style=alphabetic]{biblatex} 
\usepackage{tikz}
\usepackage{amssymb}
\usepackage{amsthm}
\usepackage[all]{xy}
\usepackage{xcolor}
\usepackage{thmtools}
\usepackage{hyperref}
\usepackage[nameinlink]{cleveref}
\usepackage{graphicx}
\usepackage{mathrsfs}
\usepackage{tikz-3dplot}

\makeatletter
    
    \@addtoreset{equation}{section}
  \makeatother

\hypersetup{
 colorlinks,
 linkcolor={teal},
 citecolor={teal},
 urlcolor={teal}
}

\calclayout

\DeclareFieldFormat
  [article,book,inbook,incollection,inproceedings,patent,thesis,unpublished]
  {title}{\emph{#1\isdot}}

\theoremstyle{plain}
	\newtheorem{theorem}{Theorem}[section]
	\newtheorem{lemma}[theorem]{Lemma}
	\newtheorem{corollary}[theorem]{Corollary}
	
	\newtheorem{proposition}[theorem]{Proposition}

\theoremstyle{definition} 
	\newtheorem{remark}[theorem]{Remark}
	\newtheorem{definition}[theorem]{Definition}
	\newtheorem{example}[theorem]{Example}



\newcommand{\Ch}{\mathrm{Ch}}
\newcommand{\Flag}{\mathrm{Flag}}
\newcommand{\zero}{\mathbf{0}}
\newcommand{\one}{\mathbf{1}}
\newcommand{\NN}{\mathbb{N}}
\newcommand{\ZZ}{\mathbb{Z}}

\newcommand{\Qq}{\mathbb{Q}[\![q]\!]}
\newcommand{\ol}[1]{\overline{#1}}
\DeclareMathOperator{\supp}{supp}
\DeclareMathOperator{\rank}{rank}
\DeclareMathOperator{\codim}{codim}
\DeclareMathOperator{\sgn}{sgn}
\newcommand{\A}{\mathcal{A}}
\newcommand{\B}{\mathcal{B}}
\newcommand{\Tpos}{\mathsf{T}}
\newcommand{\Npos}{\mathsf{N}}
\newcommand{\Cset}{\mathcal{C}}
\newcommand{\Tope}{\mathcal{T}}
\newcommand{\Face}{\mathcal{F}}

\newcommand{\MC}{\mathrm{MC}}
\newcommand{\MH}{\mathrm{MH}}
\newcommand{\Br}{\mathrm{Br}}
\newcommand{\Bl}{\mathrm{Bl}}

\newcommand{\Mag}{\mathrm{Mag}}
\newcommand{\wt}{\mathrm{wt}}
\newcommand{\Cay}{\mathrm{Cay}}
\newcommand{\adj}{\mathrm{adj}}

\begin{document}

\title{Magnitude homology of tope graphs}
\author[J. Koizumi]{Junnosuke Koizumi}
\address{RIKEN iTHEMS, Wako, Saitama 351-0198, Japan}
\email{junnosuke.koizumi@riken.jp}

\date{\today}
\subjclass[2020]{Primary 52C35; Secondary 05E45, 51F15, 55N35}

\begin{abstract}
We completely determine the magnitude homology of tope graphs of real hyperplane arrangements.
Their ranks can be described as the Hilbert functions of the Stanley--Reisner rings of certain simplicial complexes naturally associated with the arrangements.
For Coxeter arrangements, this gives a computation of the magnitude homology of the Cayley graph of the corresponding Coxeter group.
We also prove the homological reciprocity for central arrangements conjectured by Koizumi--Liu.
The proof combines poset combinatorics, the Edelman--Walker theorem, and Alexander duality.
\end{abstract}

\maketitle
\setcounter{tocdepth}{1}
\tableofcontents

\enlargethispage*{20pt}
\thispagestyle{empty}

\section{Introduction}

Magnitude is an invariant of metric spaces, or more generally of enriched categories, introduced by Leinster \cite{Leinster2013,Leinster2019}.
For a finite connected graph, its magnitude is a formal power series in $q$ whose constant term is the number of vertices.
Magnitude has various cardinality-like properties, such as additivity and multiplicativity, and may be regarded as an extension of cardinality that reflects metric structure.
Hepworth and Willerton \cite{Hepworth2017} constructed a homology theory called magnitude homology and showed that the coefficient of $q^\ell$ in the magnitude is the Euler characteristic of the length-$\ell$ magnitude homology.
Since then, the magnitude homology of graphs has been studied using various tools from combinatorics and topology, including order complexes \cite{Kaneta2021}, simplicial complex models \cite{Asao2021}, and Morse-theoretic methods \cite{Gu2018,Tajima2023}.
These studies have not only developed various computational methods but also revealed structural properties such as the existence of torsion \cite{Kaneta2021, SazdanovicSummers} and the relationship between diagonality and girth \cite{AHK}.

Because magnitude refines cardinality, it is natural to seek magnitude refinements of enumeration problems involving objects equipped with metrics or graph structures.
In joint work with Liu, the author studied such a ``magnitude refinement'' of the classical problem of counting the chambers of a real hyperplane arrangement \cite{Koizumi_Liu}.
For a real hyperplane arrangement $\A$, the tope graph $\Tope(\A)$ has the chambers as vertices, with edges joining adjacent chambers.
The author and Liu proved that when $\A$ is central, the magnitude and magnitude homology of this graph admit rich combinatorial structures, including a decomposition indexed by faces.
The purpose of this paper is to develop these results further and completely determine the magnitude homology of tope graphs.

In fact, there are still relatively few examples of graphs whose magnitude homology has been completely determined in every bidegree. Known examples include trees and complete graphs \cite{Hepworth2017}, cycles \cite{Gu2018}, certain outerplanar graphs \cite{SazdanovicSummers}, and geodetic graphs \cite{Asao_Wakatsuki}.
These existing computations relied either on decomposing graphs into simpler ones using Mayer--Vietoris sequences or the K\"unneth formula, or on the simplicity of their geodesic structures.
Since these existing methods do not appear to apply directly to tope graphs, we use a different approach in our computations.

Our main result can be described in terms of combinatorial objects called face flags of a hyperplane arrangement.
A face flag relative to a chamber $B$ is a weakly decreasing sequence
\[
\mathscr{F}=(F_1\geq F_2\geq \cdots\geq F_m)
\]
of faces of $\overline{B}$.
Each face flag $\mathscr{F}$ is assigned a bidegree $(k,\ell)$; see \Cref{sec:affine_closed}. Denote the set of face flags of bidegree $(k,\ell)$ by $\Flag_{k,\ell}(\A)_B$, and define $\Flag_{k,\ell}(\A)$ to be their disjoint union over all chambers $B$.
Then our formula is
\[
\MH_{k,\ell}(\Tope(\A))
\cong
\ZZ^{\oplus \Flag_{k,\ell}(\A)}.
\]
In particular, the magnitude homology of a tope graph is torsion-free.
If $\A$ is finite, we also show that the ranks of these groups are determined by the intersection poset.
This proves parts (4) and (5) of \cite[Conjecture 7.1]{Koizumi_Liu}.
Moreover, applying our result to the Coxeter arrangements associated with Coxeter systems of finite or affine type yields an explicit formula for the magnitude homology of the Cayley graphs of the corresponding Coxeter groups.

Interestingly, the magnitude Betti numbers described above can also be expressed through the Hilbert functions of Stanley--Reisner rings of simplicial complexes naturally associated with the arrangement.
For each chamber $B$, let $P_B$ be the poset of proper faces of $\overline{B}$, and let $R_{\A,B}$ be the Stanley--Reisner ring of $\Delta P_B$. Assign to the variable $x_F$ corresponding to a face $F$ the bidegree $(k,\ell)$ determined by $F$.
If $\Tope(\A)$ is of finite, we obtain
\[
\sum_{k,\ell\geq0}\rank\MH_{k,\ell}(\Tope(\A))u^kq^\ell
=
\sum_{B\in \Ch(\A)} \operatorname{Hilb}(R_{\A,B};u,q).
\]
This identity suggests a connection between magnitude theory and combinatorial commutative algebra.

The starting point of our proof is the observation that a tope graph carries structure richer than its metric alone: one can record the number of times each hyperplane is crossed. Correspondingly, magnitude homology also admits a direct-sum decomposition:
\[
    \MH_{\ast,\ell}(\Tope(\A))=
    \bigoplus_{\substack{\alpha\in \NN^{\oplus \A}\\|\alpha|=\ell}} \MH_{\ast,\alpha}(\Tope(\A)).
\]
First, we prove the localization theorem for these direct summands: if $\A_\alpha=\supp(\alpha)$, then $\MH_{k,\alpha}(\Tope(\A))$ ``localizes'' to the subarrangement $\A_\alpha$.
Second, we prove a periodicity theorem for central hyperplane arrangements:
if $\A$ is a central hyperplane arrangement of rank $r$ and $\alpha\geq\one$, then
\[
        \MH_{k,\alpha}(\Tope(\A))
        \cong
        \MH_{k-r,\alpha-\one}(\Tope(\A)).
\]
The proof of this periodicity uses an approach based on the topology of order complexes of posets and Alexander duality.
Finally, we show that the recursion for face flags matches the recursion for magnitude homology obtained by applying localization and periodicity.

For central hyperplane arrangements, we also obtain the homological reciprocity theorem, which resembles other combinatorial reciprocity theorems such as the Ehrhart--Macdonald reciprocity.
If $\A$ is a central hyperplane arrangement of rank $r$ and $n=\#\A$, then we have
\[
        \MH_{k,\ell}^{\circ}(\Tope(\A))
        \cong
        \MH_{k-r,\ell-n}(\Tope(\A)).
\]
Here, the left-hand side is the interior magnitude homology defined by Koizumi--Liu \cite{Koizumi_Liu}.
This resolves part (6) of \cite[Conjecture 7.1]{Koizumi_Liu}.

Finally, we apply our formula to concrete examples.
The formula recovers Gu's computation of the magnitude homology of even cycles \cite{Gu2018}.
It also gives a complete computation for the Cayley graph of $S_n$ with respect to adjacent transpositions.
We also construct, for every $n\geq 1$, a graph whose magnitude homology is concentrated on the diagonal for $k\leq n$, but is not diagonal in all degrees.

The paper is organized as follows.
Section~2 recalls the necessary material from magnitude homology and affine hyperplane arrangements.
Section~3 proves the localization theorem and states the periodicity theorem.
Sections~4 and 5 prove the periodicity theorem.
Section~6 proves the main theorem, gives an interpretation in terms of Stanley--Reisner rings, and discusses the case of Coxeter arrangements.
Section~7 treats examples, including even cycles and the Cayley graph of $S_n$.

\subsection*{Use of AI}
In writing this paper, we used AI in the following ways.
\begin{itemize}
    \item Some ideas used in the proofs of \Cref{lem:homological-quillen}, \Cref{lem:nonfacial_acyclic}, \Cref{lem:nonfacial_criterion}, and \Cref{prop:strong_acyclicity} were suggested by ChatGPT-5.5 Pro.
    \item We used ChatGPT-5.5 Pro to improve the prose and to identify grammatical and mathematical errors.
\end{itemize}
All AI-generated suggestions and text incorporated into the paper were carefully checked by the author.
The author takes full responsibility for the text of this paper.

\subsection*{Conventions and notation}
We write $\NN=\{0,1,2,\ldots\}$.
If $E$ is a set and $\alpha\in\NN^{\oplus E}$, we write
\[
        \supp(\alpha):=\{e\in E\mid \alpha_e>0\}
        \quad\text{and}\quad
        |\alpha|:=\sum_{e\in E}\alpha_e.
\]
For $E'\subseteq E$, the restriction of $\alpha$ to $E'$ is denoted by
$\alpha|_{E'}\in\NN^{\oplus E'}$.
When $E$ is finite, we write $\one_E\in\NN^{\oplus E}$ for the vector all of whose coordinates are equal to
$1$, and we abbreviate it to $\one$ when the ground set is clear.
Similarly, $\zero$ denotes the zero vector on the relevant ground set.
Thus $\supp(\alpha)=\varnothing$ is equivalent to $\alpha=\zero$.
The order on $\NN^{\oplus E}$ is always the coordinatewise order: for
$\alpha,\beta\in \NN^{\oplus E}$,
\[
        \alpha\leq \beta
        \quad\Longleftrightarrow\quad
        \forall e\in E,\quad \alpha_e\leq \beta_e.
\]
For a subposet $P\subseteq \NN^{\oplus E}$ and $x\in\NN^{\oplus E}$, put
$P_{\leq x}:=\{p\in P\mid p\leq x\}$.
For $\alpha\in\NN^{\oplus E}$, let
\[
        \ol\alpha:=(\alpha\bmod 2)\in \{0,1\}^E
\]
be its coordinatewise parity vector.

Graphs are assumed to be simple and undirected.
Hyperplane arrangements are assumed to be real.
Topological statements about posets are interpreted through their order
complexes.
For example, a poset $P$ is called \emph{acyclic} if
\[
        \forall i\in \ZZ,\quad \widetilde H_i(\Delta P)=0.
\]
We use the standard convention that the empty complex has reduced homology
$\widetilde H_{-1}(\varnothing)=\ZZ$ and no other reduced homology.
Thus the empty poset is not acyclic, while every contractible poset is acyclic.

\section{Preliminaries}

We first recall basic concepts and results about magnitude homology and hyperplane arrangements.

\subsection{Magnitude of $\NN$-valued metric spaces}
An $\NN$-valued metric space is a set $X$ equipped with a map $d\colon X\times X\to \NN$ satisfying the usual metric axioms.
For example, if $G$ is a connected graph, possibly infinite, then $V(G)$ can be regarded as an $\NN$-valued metric space via the shortest-path metric.
We denote the resulting $\NN$-valued metric space simply by $G$.

Let $X$ be an $\NN$-valued metric space.
We say that $X$ is \emph{of finite type} if for every $x\in X$ and $r\geq 0$, the set $B(x,r)=\{y\in X\mid d(x,y)\leq r\}$ is finite.
In this situation, for any $w \colon X\to\Qq$ and $x\in X$, the sum
\[
\sum_{y\in X}q^{d(x,y)}w(y)
\]
converges in $\Qq$, since this becomes a finite sum modulo $q^n$ for every $n\geq 0$.
A \emph{magnitude weighting} for $X$ is a map $\wt \colon X\to \Qq$ satisfying
\[
\forall x\in X,\quad \sum_{y\in X}q^{d(x,y)}\wt(y)=1.
\]

A proper $k$-chain in $X$ is a $(k+1)$-tuple $\vec{x}=(x_0,x_1,\ldots,x_k)\in X^{k+1}$ such that $x_i\neq x_{i+1}$ for all $i$.
Define its length by
\[
\ell(\vec{x})=d(x_0,x_1)+d(x_1,x_2)+\cdots+d(x_{k-1},x_k).
\]
We write $P_{k}(X)$ for the set of proper $k$-chains in $X$, and set
\begin{align*}
P_{k,\ell}(X)&=\{\vec{x}\in P_k(X)\mid \ell(\vec{x})=\ell\},\\
P_{k,\ell}(X)_{a\to\bullet}&=\{\vec{x}\in P_k(X)\mid \ell(\vec{x})=\ell,\ x_0=a\},\\
P_{k,\ell}(X)_{\bullet\to b}&=\{\vec{x}\in P_k(X)\mid \ell(\vec{x})=\ell,\ x_k=b\},\\
P_{k,\ell}(X)_{a\to b}&=\{\vec{x}\in P_k(X)\mid \ell(\vec{x})=\ell,\ x_0=a,\ x_k=b\}
\end{align*}
for $a,b\in X$.
Note that $P_{k,\ell}(X)=\varnothing$ for $k>\ell$.
If $X$ is of finite type, then $P_{k,\ell}(X)_{a\to\bullet}$ and $P_{k,\ell}(X)_{\bullet\to b}$ are finite sets.

\begin{lemma}\label{lem:mag_expansion}
    Let $X$ be an $\NN$-valued metric space of finite type.
    Then there exists a unique magnitude weighting $\wt\colon X\to \Qq$ for $X$, which is given by
    \[
    \wt(b)=
    \sum_{\ell=0}^\infty
    \sum_{k=0}^\ell
    (-1)^k \#P_{k,\ell}(X)_{\bullet\to b}\,q^\ell.
    \]
\end{lemma}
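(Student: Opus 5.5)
We treat the defining equations of a magnitude weighting as a linear system over $\Qq$ and handle existence and uniqueness by working one power of $q$ at a time. Write $Z(x,y)=q^{d(x,y)}$ and $Z=I+N$, where $N(x,y)=q^{d(x,y)}$ for $x\neq y$ and $N(x,x)=0$. Since $d(x,y)\geq 1$ whenever $x\neq y$, every entry of $N$ lies in $q\Qq$. A weighting is a function $w$ with $\sum_y Z(x,y)w(y)=1$ for all $x$, i.e.\ $Zw=\mathbf{1}$ read row by row. The candidate formal inverse is $\sum_{k\geq 0}(-1)^kN^k$. Its $(a,b)$ entry is
\[
\sum_{k\geq0}(-1)^k\sum_{\vec x\in P_k(X)_{a\to b}}q^{\ell(\vec x)} ,
\]
because a product $N(x_0,x_1)\cdots N(x_{k-1},x_k)$ is nonzero exactly when consecutive points differ. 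Applying this to the constant vector $\mathbf 1$ (summing over $a$, not $b$) gives exactly the claimed formula: the coefficient of $q^\ell$ is $\sum_{k\le \ell}(-1)^k\#P_{k,\ell}(X)_{\bullet\to b}$. Here we use that $P_{k,\ell}=\varnothing$ for $k>\ell$.

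The first step is well-definedness. For fixed $b$ and $\ell$, the set $P_{k,\ell}(X)_{\bullet\to b}$ is finite by finite type, as noted before the lemma, and only $k\le\ell$ contributes. So $\wt(b)\in\Qq$ is well defined.

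The second step is to verify the weighting equation at each $x$. Split $\sum_y q^{d(x,y)}\wt(y)$ into the term $y=x$ and the terms $y\neq x$. Every chain ending at $y\neq x$ extends to a chain ending at $x$ with one more step. Conversely, every chain ending at $x$ with $k\ge1$ arises uniquely this way from its truncation $(x_0,\dots,x_{k-1})$, where $x_{k-1}=y\neq x$. Hence, coefficientwise in $q^\ell$,
\[
\sum_{y\neq x}q^{d(x,y)}\wt(y)=-\bigl(\wt(x)-1\bigr),
\]
where the $1$ is the unique $0$-chain $(x)$ in degree $q^0$. Therefore the total sum is $1$.

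This bijection needs the distance symmetry $d(y,x)=d(x,y)$, so that the extension step contributes exactly the factor $q^{d(x,y)}$. It also needs interchanging infinite sums, which is justified because modulo $q^n$ everything is a finite sum: only $y\in B(x,n)$ matter, and for each such $y$ only finitely many chains do.

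The third step is uniqueness, which I expect to be the only real obstacle, since $X$ may be infinite. It suffices to show that $Zv=0$ forces $v=0$. Suppose $v\neq 0$, and pick $n$ minimal such that some $v(y)$ has a nonzero $q^n$ coefficient, and such a $y=x$. Then
\[
(Zv)(x)=v(x)+\sum_{y\neq x}q^{d(x,y)}v(y).
\]
Every term in the sum lies in $q^{n+1}\Qq$, because $d(x,y)\ge1$ and $v(y)\in q^n\Qq$ by minimality of $n$. So the $q^n$ coefficient of $(Zv)(x)$ equals that of $v(x)$, which is nonzero, a contradiction. The difference of two weightings satisfies $Zv=\mathbf 0$, so weightings are unique. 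This argument avoids any need for the matrix $Z$ to be invertible as an infinite matrix.
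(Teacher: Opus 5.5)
Your proposal is correct. The paper gives no argument of its own for this lemma. It simply defers to \cite[Corollary~3.23]{Asao_filtered}, where the statement is obtained in Asao's more general setting of filtered-set enriched categories.

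You instead prove it directly, in two steps.
\begin{itemize}
\item \emph{Existence.} You check the weighting equation through the bijection that extends a proper chain ending at $y\neq x$ by the final step $y\to x$. This telescoping is the entrywise content of $\bigl(\sum_{k}(-1)^kN^k\bigr)(I+N)=I$.
\item \emph{Uniqueness.} You use the lowest-order-coefficient argument. This is the right tool for infinite $X$, because it never requires inverting $Z$ as an infinite matrix.
\end{itemize}

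What your route buys is transparency about the hypotheses. Finite type is used only to make each $q^\ell$-coefficient a finite sum. The symmetry $d(x,y)=d(y,x)$ is genuinely needed: without it, the chains-ending-at-$b$ formula would solve the coweighting equation $\sum_{x}\wt(x)q^{d(x,y)}=1$ rather than the weighting equation. This is also why your heuristic lands on the right formula. Summing the $(a,b)$-entries of the formal inverse over $a$ computes $\mathbf{1}^{T}Z^{-1}$ rather than $Z^{-1}\mathbf{1}$, and the two agree only because $Z$ is symmetric. You do handle this correctly in your second step.

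The citation, in exchange, places the formula inside a larger theory where it needs no separate verification. It also covers settings beyond symmetric $\NN$-valued metrics.
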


\begin{proof}
    See \cite[Corollary 3.23]{Asao_filtered}.
\end{proof}

When $X$ is finite, we define the \emph{magnitude} of $X$ to be the total sum of its magnitude weighting:
\[
\Mag(X)=\sum_{x\in X}\wt(x)\in \Qq.
\]
Note that we have $\Mag(X)|_{q=0}=\#X$, since $\wt(x)|_{q=0}=1$ for every $x\in X$.
Therefore, magnitude may be regarded as a natural refinement of cardinality.

\subsection{Magnitude homology of $\NN$-valued metric spaces}
Let $X$ be an $\NN$-valued metric space.
We define $\MC_{k,\ell}(X)$ to be the free abelian group generated by the set $P_{k,\ell}(X)$ of proper $k$-chains of length $\ell$.
We define the boundary operator $\partial:\MC_{k,\ell}(X)\to \MC_{k-1,\ell}(X)$ by $\partial=\sum_{i=1}^{k-1}(-1)^i\partial_i$, where
\[
    \partial_i(x_0,\ldots,x_k)=\begin{cases}
    (x_0,\ldots,\widehat{x_i},\ldots,x_k), & \text{ if } d(x_{i-1},x_{i+1})=d(x_{i-1},x_i)+d(x_i,x_{i+1});\\
    0, & \text{ otherwise}.
\end{cases}
\]
Here, $\widehat{x_i}$ denotes deletion of $x_i$. It is straightforward to check that $\partial^2=0$.
The homology of the chain complex $(\MC_{\ast,\ell}(X),\partial)$ is called the length-$\ell$ \emph{magnitude homology} of $X$ and is denoted by $\MH_{\ast,\ell}(X)$.
When $\MH_{k,\ell}(X)$ is of finite rank, we define the \emph{magnitude Betti number} of $X$ by $\beta_{k,\ell}(X)=\rank \MH_{k,\ell}(X)$.

Let $a,b\in X$.
We define
\[
\MC_{\ast,\ell}(X)_{a\to\bullet},\quad
\MC_{\ast,\ell}(X)_{\bullet\to b},\quad
\MC_{\ast,\ell}(X)_{a\to b}
\]
to be the subcomplexes of $\MC_{\ast,\ell}(X)$ spanned by
\[
P_{\ast,\ell}(X)_{a\to\bullet},\quad
P_{\ast,\ell}(X)_{\bullet \to b},\quad
P_{\ast,\ell}(X)_{a\to b},
\]
respectively.
Then we have
\[
    \MC_{\ast,\ell}(X)
    \cong \bigoplus_{a\in X}\MC_{\ast,\ell}(X)_{a\to \bullet}
    \cong \bigoplus_{b\in X}\MC_{\ast,\ell}(X)_{\bullet\to b}
    \cong \bigoplus_{a,b\in X}\MC_{\ast,\ell}(X)_{a\to b}.
\]
This induces decompositions of homology groups
\begin{equation}\label{vertexdecomp}
    \MH_{k,\ell}(X)
    \cong\bigoplus_{a\in X}\MH_{k,\ell}(X)_{a\to\bullet}
    \cong\bigoplus_{b\in X}\MH_{k,\ell}(X)_{\bullet\to b}
    \cong\bigoplus_{a,b\in X}\MH_{k,\ell}(X)_{a\to b}.
\end{equation}
If $X$ is of finite type, then $\MH_{k,\ell}(X)_{a\to \bullet}$, $\MH_{k,\ell}(X)_{\bullet\to b}$, and $\MH_{k,\ell}(X)_{a\to b}$ are finitely generated, since $P_{k,\ell}(X)_{a\to \bullet}$, $P_{k,\ell}(X)_{\bullet\to b}$, and $P_{k,\ell}(X)_{a\to b}$ are finite.
The next lemma shows that $\MH_{k,\ell}(X)_{\bullet\to b}$ categorifies the magnitude weighting:

\begin{lemma}\label{lem:weight}
    Let $X$ be an $\NN$-valued metric space of finite type.
    Then the unique magnitude weighting $\wt \colon X\to \Qq$ for $X$ is given by
    \[
    \wt(b)=\sum_{\ell=0}^\infty\sum_{k=0}^\ell(-1)^k\rank \MH_{k,\ell}(X)_{\bullet\to b}\,q^\ell.
    \]
\end{lemma}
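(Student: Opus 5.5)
The plan is to combine \Cref{lem:mag_expansion} with an Euler characteristic computation for the finite chain complexes $\MC_{\ast,\ell}(X)_{\bullet\to b}$. The expansion already gives
\[
\wt(b)=\sum_{\ell=0}^\infty\sum_{k=0}^\ell(-1)^k\#P_{k,\ell}(X)_{\bullet\to b}\,q^\ell .
\]
So it suffices to show, for each fixed $\ell$ and $b$,
\[
\sum_{k=0}^\ell(-1)^k\#P_{k,\ell}(X)_{\bullet\to b}=\sum_{k=0}^\ell(-1)^k\rank\MH_{k,\ell}(X)_{\bullet\to b}.
\]

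First I will check that $\MC_{\ast,\ell}(X)_{\bullet\to b}$ really is a subcomplex. The boundary $\partial=\sum_{i=1}^{k-1}(-1)^i\partial_i$ only deletes interior points $x_i$ with $1\le i\le k-1$. Hence the endpoints $x_0$ and $x_k$ are preserved.

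Next I check that the length is preserved. The face $\partial_i$ is nonzero only when $d(x_{i-1},x_{i+1})=d(x_{i-1},x_i)+d(x_i,x_{i+1})$, so the total length stays $\ell$. The deleted chain is also still proper: since $x_i\neq x_{i+1}$, the right-hand side is positive, which forces $x_{i-1}\neq x_{i+1}$. This confirms the decomposition \eqref{vertexdecomp} used in the statement.

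Second, I will use finiteness. Because $X$ is of finite type, each $P_{k,\ell}(X)_{\bullet\to b}$ is finite, as noted before the lemma. Moreover $P_{k,\ell}=\varnothing$ for $k>\ell$, since each step of a proper chain has length at least $1$. Therefore $\MC_{\ast,\ell}(X)_{\bullet\to b}$ is a bounded complex of finitely generated free abelian groups, concentrated in degrees $0\le k\le\ell$.

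The standard Euler characteristic identity for such complexes now applies. One proves it by tensoring with $\mathbb{Q}$, which is exact, so ranks of homology are preserved by universal coefficients over a field of characteristic zero. Then one uses rank–nullity in each degree. This gives the alternating sum of ranks of chain groups equal to the alternating sum of ranks of homology.

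Substituting this into the formula of \Cref{lem:mag_expansion} yields the claim. There is no real obstacle. The only points needing care are the subcomplex verification and the finiteness that makes ranks and the Euler characteristic well defined. Both are settled by the remarks preceding the lemma.
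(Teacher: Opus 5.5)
Your proposal is correct and takes the same approach as the paper. You use \Cref{lem:mag_expansion} to write $\wt(b)$ as the alternating sum of the ranks of the chain groups $\MC_{k,\ell}(X)_{\bullet\to b}$, and then apply the Euler--Poincar\'e identity to this bounded complex of finitely generated free abelian groups. Your additional checks only spell out details the paper leaves implicit: that $\MC_{\ast,\ell}(X)_{\bullet\to b}$ is a subcomplex, that it vanishes for $k>\ell$, and the rank--nullity argument over $\mathbb{Q}$.
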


\begin{proof}
    By \Cref{lem:mag_expansion}, we have
    \[
    \wt(b)=\sum_{\ell=0}^\infty\sum_{k=0}^\ell(-1)^k\rank \MC_{k,\ell}(X)_{\bullet\to b}\,q^\ell.
    \]
    For each $\ell$, the complex $\MC_{\ast,\ell}(X)_{\bullet\to b}$ is a bounded complex of finitely generated free abelian groups, so the Euler--Poincar\'e identity gives the desired formula.
\end{proof}

We say that an $\NN$-valued metric space $X$ is \emph{diagonal} if $\MH_{k,\ell}(X)=0$ for $k\neq\ell$.
It is known that the complete graph $K_n$ and the hypercube graph $Q_n$ are diagonal \cite{Hepworth2017}.
On the other hand, Gu \cite{Gu2018} computed $\MH_{k,\ell}(C_n)$ and showed that the $n$-cycle $C_n$ is never diagonal for $n\geq 5$.

For $a,b\in X$, define the \emph{closed interval} $[a,b]_X$ by
\[
[a,b]_X=\{x\in X\mid d(a,x)+d(x,b)=d(a,b)\}.
\]
We define a partial order $\preceq_{a,b}$ on $[a,b]_X$ by
\[
        x\preceq_{a,b} y
        \quad\Longleftrightarrow\quad
        d(a,x)+d(x,y)+d(y,b)=d(a,b).
\]
We write $\prec_{a,b}$ for the associated strict order.
The \emph{open interval} $(a,b)_X$ is defined as the subposet obtained from $[a,b]_X$ by removing $a$ and $b$.

\begin{proposition}[{cf. \cite[Proposition 2.3]{Gomi2025}}]\label{intervalMH}
    Let $X$ be an $\NN$-valued metric space.
    For $a,b\in X$ with $d(a,b)=\ell>0$, there is an isomorphism of chain complexes
    \[
    \MC_{\ast,\ell}(X)_{a\to b}\cong \widetilde{C}_{\ast-2}(\Delta(a,b)_X),
    \]
    where the right-hand side is the augmented simplicial chain complex.
    This induces an isomorphism of homology groups
    \[
    \MH_{k,\ell}(X)_{a\to b}\cong\widetilde{H}_{k-2}(\Delta(a,b)_X).
    \]
\end{proposition}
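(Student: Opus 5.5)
The plan is to write down an explicit bijection between generators on the two sides and check that the boundary operators agree, including signs. For this we need to understand what a proper chain of length $\ell=d(a,b)$ from $a$ to $b$ looks like.

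First, let $\vec{x}=(x_0,\ldots,x_k)\in P_{k,\ell}(X)_{a\to b}$. By the triangle inequality,
\[
\ell=d(a,b)\le d(x_0,x_1)+\cdots+d(x_{k-1},x_k)=\ell ,
\]
so equality holds throughout. The same argument applied to partial sums gives
\[
d(a,x_i)+d(x_i,x_j)+d(x_j,b)=d(a,b) \quad\text{for all } i\le j .
\]
Hence every $x_i$ lies in $[a,b]_X$, and $x_0\preceq_{a,b}x_1\preceq_{a,b}\cdots\preceq_{a,b}x_k$. Properness together with $d(x_i,x_{i+1})>0$ makes this chain strict. Since $x_0=a$ and $x_k=b$ are the minimum and maximum, the interior points $x_1,\ldots,x_{k-1}$ are distinct elements of $(a,b)_X$ forming a strict chain, that is, a $(k-2)$-simplex of $\Delta(a,b)_X$. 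For $k=1$ the chain is $(a,b)$, which corresponds to the empty simplex in degree $-1$. For $k=0$ there is nothing, since $a\neq b$; this matches $\widetilde C_{-2}=0$.

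Conversely, any strict chain $a\prec y_1\prec\cdots\prec y_{k-1}\prec b$ gives a proper chain, and additivity of the order relation yields total length $d(a,b)=\ell$. So the map
\[
\vec{x}\mapsto [x_1,\ldots,x_{k-1}]
\]
is a bijection from $P_{k,\ell}(X)_{a\to b}$ onto the $(k-2)$-simplices. Here I orient each simplex by the order of the chain, and transitivity of $\preceq_{a,b}$ is what makes this well defined.

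Next, compare the differentials. For $1\le i\le k-1$, the condition $d(x_{i-1},x_{i+1})=d(x_{i-1},x_i)+d(x_i,x_{i+1})$ always holds, because all the points lie on a geodesic chain. So each $\partial_i$ simply deletes $x_i$, and
\[
\partial\vec{x}=\sum_{i=1}^{k-1}(-1)^i(\ldots,\widehat{x_i},\ldots).
\]
The simplicial boundary of $[x_1,\ldots,x_{k-1}]$ is
\[
\sum_{j=0}^{k-2}(-1)^j[\ldots,\widehat{x_{j+1}},\ldots],
\]
which differs from the above by the global sign $-1$. I would therefore compose the bijection with the sign $(-1)^k$ on degree $k$ so that it becomes a chain map. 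Equivalently, one can note that a uniform sign $-1$ on the differential does not change homology.

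Finally, I check the augmentation degree. For $k=2$, $\partial(a,x_1,b)=-(a,b)$, which matches the augmentation $[x_1]\mapsto[\varnothing]$ up to the same sign convention. Taking homology then gives $\MH_{k,\ell}(X)_{a\to b}\cong\widetilde H_{k-2}(\Delta(a,b)_X)$.

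None of these steps is a real obstacle. The only care needed is in the first step: deriving the order relations for all $i\le j$ from equality in the triangle inequality, which is done by splitting the full sum at $i$ and at $j$.
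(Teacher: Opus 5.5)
Your proposal is correct and follows the paper's proof: geodesicity is forced by $\ell=d(a,b)$, and the isomorphism is $\vec{x}\mapsto(-1)^k[x_1\prec_{a,b}\cdots\prec_{a,b}x_{k-1}]$, where the sign $(-1)^k$ absorbs the index shift between the magnitude and simplicial boundaries. You also write out the checks the paper leaves implicit: the order relations, the converse direction, and the augmentation in degree $k=2$.
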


\begin{proof}
    Any proper chain $\vec{x}\in P_{k,\ell}(X)_{a\to b}$ must be geodesic since $\ell=d(a,b)$.
    Mapping $\vec{x}$ to the element $(-1)^k[x_1\prec_{a,b}\cdots\prec_{a,b} x_{k-1}]$ in $\widetilde{C}_{k-2}(\Delta(a,b)_X)$ defines the desired isomorphism.
\end{proof}

\subsection{Magnitude homology of partial cubes}
Let $E$ be a set.
The \emph{Hamming distance} on the hypercube $\{+,-\}^E$ is defined by
\[
d_H(v,w)=\#\{e\in E\mid v_e\neq w_e\} \in \NN\cup\{\infty\}.
\]
We say that an $\NN$-valued metric space $(X,d)$ is a \emph{partial cube} if there exists an isometric embedding $i\colon X\hookrightarrow\{+,-\}^E$ for some set $E$.
Fixing such an embedding, we can define, for two points $x,y\in X$, the distance vector $\vec{d}(x,y)\in \NN^{\oplus E}$ by
\[
\vec{d}(x,y)_e=
\begin{cases}
1,&\text{if }i(x)_e\neq i(y)_e;\\
0,&\text{if }i(x)_e=i(y)_e.
\end{cases}
\]
This refines the distance function because $\lvert\vec d(x,y)\rvert=d(x,y)$.
This refinement gives rise to a decomposition of magnitude homology, as explained below.

Let $X$ be a partial cube and fix an isometric embedding $i\colon X\hookrightarrow \{+,-\}^E$.
For a proper $k$-chain $\vec{x}\in P_k(X)$, we define its \emph{length vector} by
\[
\vec{\ell}(\vec{x})=\vec{d}(x_0,x_1)+\vec{d}(x_1,x_2)+\cdots+\vec{d}(x_{k-1},x_k) \in \NN^{\oplus E}.
\]
For $\alpha\in \NN^{\oplus E}$ and $a,b\in X$, we set
\begin{align*}
P_{k,\alpha}(X)&=\{\vec{x}\in P_k(X)\mid \vec{\ell}(\vec{x})=\alpha\},\\
P_{k,\alpha}(X)_{a\to\bullet}&=\{\vec{x}\in P_k(X)\mid \vec{\ell}(\vec{x})=\alpha,\ x_0=a\},\\
P_{k,\alpha}(X)_{\bullet\to b}&=\{\vec{x}\in P_k(X)\mid \vec{\ell}(\vec{x})=\alpha,\ x_k=b\},\\
P_{k,\alpha}(X)_{a\to b}&=\{\vec{x}\in P_k(X)\mid \vec{\ell}(\vec{x})=\alpha,\ x_0=a,\ x_k=b\}.
\end{align*}
Define
\[
    \MC_{k,\alpha}(X),\quad
    \MC_{k,\alpha}(X)_{a\to\bullet},\quad
    \MC_{k,\alpha}(X)_{\bullet\to b},\quad
    \MC_{k,\alpha}(X)_{a\to b}
\]
to be the free abelian groups generated by
\[
    P_{k,\alpha}(X),\quad
    P_{k,\alpha}(X)_{a\to\bullet},\quad
    P_{k,\alpha}(X)_{\bullet\to b},\quad
    P_{k,\alpha}(X)_{a\to b},
\]
respectively.
These groups form subcomplexes of $\MC_{\ast,\lvert\alpha\rvert}(X)$.
Write
\[
    \MH_{k,\alpha}(X),\quad
    \MH_{k,\alpha}(X)_{a\to\bullet},\quad
    \MH_{k,\alpha}(X)_{\bullet\to b},\quad
    \MH_{k,\alpha}(X)_{a\to b}
\]
for the $k$-th homology group of these subcomplexes.
Then we have
\[
    \MH_{\ast,\ell}(X)
    \cong
    \bigoplus_{\substack{\alpha\in \NN^{\oplus E}\\ \lvert\alpha\rvert=\ell}}
    \MH_{\ast,\alpha}(X),
\]
and, for every $\alpha\in \NN^{\oplus E}$,
\[
    \MH_{\ast,\alpha}(X)
    \cong \bigoplus_{a\in X}\MH_{\ast,\alpha}(X)_{a\to\bullet}
    \cong \bigoplus_{b\in X}\MH_{\ast,\alpha}(X)_{\bullet\to b}
    \cong \bigoplus_{a,b\in X}\MH_{\ast,\alpha}(X)_{a\to b}.
\]

Let $E'\subseteq E$ be a subset.
A proper $k$-chain $\vec{x}\in P_k(X)$ is called \emph{$E'$-supported} if $\supp(\vec{\ell}(\vec{x}))=E'$.
The $E'$-supported chain group $\MC_{k,\ell}(X;E')\subset\MC_{k,\ell}(X)$ is generated by $E'$-supported proper $k$-chains.
These groups form a subcomplex of $\MC_{\ast,\ell}(X)$, and its homology is denoted by $\MH_{\ast,\ell}(X;E')$.

\subsection{Hyperplane arrangements}
We recall basic facts about hyperplane arrangements which will be needed later.
Standard references are \cite{Orlik1992,Stanley2007}.

Let $\A$ be an affine hyperplane arrangement in $\mathbb R^d$, that is, a locally finite set of affine hyperplanes in $\mathbb R^d$.
Here, $\A$ is said to be locally finite if, for every compact subset $C\subseteq \mathbb R^d$, only finitely many elements of $\A$ intersect $C$.
For each $H\in\A$, fix an affine defining function $f_H:\mathbb R^d\to\mathbb R$ with $H=f_H^{-1}(0)$.
The arrangement $\A$ stratifies $\mathbb R^d$ into relatively open, locally polyhedral faces.
The \emph{sign vector} of a face $F$ is
\[
        (F_H)_{H\in\A}\in\{+,-,0\}^{\A},        
\]
where $F_H=\sgn(f_H(x))$ for any point $x$ in the relative interior of $F$.
We write $\Face(\A)$ for the face poset, ordered by
\[
        F\leq G
        \quad\Longleftrightarrow\quad
        F\subseteq\overline G.
\]
Equivalently, on sign vectors the order is the coordinatewise order generated by
$0<+$ and $0<-$, with $+$ and $-$ incomparable.
Thus chambers are the maximal faces; the set of chambers is denoted by
$\Ch(\A)$.

For two chambers $C,C'\in\Ch(\A)$, let
\[
        S(C,C'):=\{H\in\A\mid C_H\neq C'_H\}
\]
be the set of hyperplanes separating $C$ and $C'$.
The \emph{tope graph} $\Tope(\A)$ is the graph with vertex set $\Ch(\A)$ in which two chambers $C,C'$ are adjacent if $\#S(C,C')=1$.
Its shortest-path metric satisfies
\[
        d(C,C')=\#S(C,C').
\]
Thus the sign vectors define an isometric embedding $\Tope(\A)\hookrightarrow\{+,-\}^{\A}$.
In particular, one can decompose the magnitude homology $\MH_{k,\ell}(\Tope(\A))$ into the groups $\MH_{k,\alpha}(\Tope(\A))$ for $\alpha\in \NN^{\oplus \A}$ with $\lvert\alpha\rvert=\ell$.
For a subarrangement $\B\subseteq\A$, the construction above gives the $\B$-supported magnitude homology $\MH_{k,\ell}(\Tope(\A);\B)$.
Following \cite{Koizumi_Liu}, we define the \emph{interior magnitude homology} of $\Tope(\A)$ by
\[
\MH_{k,\ell}^\circ (\Tope(\A))=\MH_{k,\ell}(\Tope(\A);\A).
\]
Note that we trivially have $\MH_{k,\ell}^\circ (\Tope(\A))=0$ for infinite arrangements.

The \emph{intersection poset} of $\A$ is defined by
\[
L(\A)=\biggl\{\bigcap_{H\in\mathcal{B}}H\mid \mathcal{B}\subseteq \A\biggr\}\setminus\{\varnothing\},
\]
where the order is given by reverse inclusion.
An element of $L(\A)$ is called a \emph{flat}; the atoms of $L(\A)$ are precisely the
hyperplanes of $\A$.
For a flat $X\in L(\A)$, define the \emph{localization} and \emph{restriction}
\[
    \A_X:=\{H\in\A\mid X\subseteq H\},
    \qquad
    \A^X:=\{H\cap X\mid H\in\A\setminus\A_X,\ H\cap X\neq\varnothing\},
\]
where repeated hyperplanes are omitted from $\A^X$, which is regarded as an
arrangement in $X$.  If $\widehat 0=\mathbb R^d$ denotes the minimum of $L(\A)$,
then $L(\A_X)$ identifies with the interval $[\widehat{0},X]$, while $L(\A^X)$
identifies with the principal filter $L(\A)_{\geq X}$.
We say that $\A$ is \emph{central} if $\bigcap_{H\in\A}H\neq\varnothing$.
For a face $F\in\Face(\A)$, we similarly write
\[
    \A_F:=\{H\in\A\mid F\subseteq H\}.
\]
This localization is central.

Let $\A$ be a central hyperplane arrangement.
Then $\#\A$ is finite since $\A$ is locally finite.
We define
\[
        F_\A:=\bigcap_{H\in\A}H
\]
and call it the \emph{center} of $\A$.
We define the \emph{rank} of $\A$ by $\rank(\A):=\codim F_\A$.
Every chamber $C$ has an opposite chamber, denoted by $-C$, obtained by changing all signs.

\begin{example}\label{topeboolean}
    Let $\A=\Bl_d$ be the $d$-th Boolean arrangement, consisting of the $d$ coordinate hyperplanes in $\mathbb R^d$.
    Then $\Tope(\A)$ is the whole hypercube $\{+,-\}^d$.
\end{example}

\begin{example}\label{topegeneric}
    Let $r\geq 2$ and let $\A$ be a central hyperplane arrangement in $\mathbb R^r$ with $\#\A\geq r$.
    We say that $\A$ is \emph{generic} if every $r$-element subarrangement of $\A$ has rank $r$.
    We write $U_{r,n}$ for a generic central hyperplane arrangement of $n$ hyperplanes in $\mathbb R^r$.
    For $\A=U_{r,r+1}$, the tope graph $\Tope(\A)$ is isomorphic to the induced subgraph of $\{+,-\}^{\A}\cong Q_{r+1}$ obtained by removing $(+,\dots,+)$ and $(-,\dots,-)$.
\end{example}

\begin{example}\label{topecoxeter}
    Let $(W,S)$ be a Coxeter system of finite or affine type.
    Let $\A_W$ be the associated Coxeter arrangement.
    Then $W$ acts freely and transitively on the chambers of $\A_W$.
    The chambers of $\A_W$ are in bijection with the elements of $W$, and adjacency of chambers corresponds to multiplication on the right by elements of $S$.
    Hence the tope graph $\Tope(\A_W)$ is isomorphic to the Cayley graph $\Cay(W,S)$ of $W$ with respect to $S$.
    Since $S$ is finite, the Cayley graph $\Cay(W,S)$ has finite degree and hence finite balls; thus it is of finite type.
    Its magnitude weighting is given by the reciprocal of the Poincar\'e series:
    \[
    \wt(b)=\frac{1}{\sum_{w\in W}q^{\ell(w)}}.
    \]
    Here, $\ell(w)$ denotes the length of $w\in W$.
\end{example}

\begin{definition}[Tits product]
    For sign vectors $F,G\in \{+,-,0\}^\A$, define their \emph{Tits product} $F\circ G$ by
    \[
    (F\circ G)_H=\begin{cases}
    F_H, &\text{if }F_H\neq 0;\\
    G_H, &\text{if }F_H=0.
    \end{cases}
    \]
    If $F,G\in\Face(\A)$, then $F\circ G\in \Face(\A)$.
    Geometrically, if $x\in F$ and $y\in G$ are relative interior points, then $F\circ G$ contains $(1-\varepsilon)x+\varepsilon y$ for all sufficiently small $\varepsilon>0$.
    If $F\in \Face(\A)$ and $G\in \Ch(\A)$, then $F\circ G\in \Ch(\A)$.
\end{definition}

For a face $F$ of an affine hyperplane arrangement $\A$, write
\[
        \Ch(\A)_F:=\{C\in\Ch(\A)\mid F\leq C\}.
\]
This is the set of chambers whose closures contain $F$.
An interval $[C,D]_{\Tope(\A)}$ is called \emph{facial} if
$[C,D]_{\Tope(\A)}=\Ch(\A)_F$ for some $F\in\Face(\A)$.
In this situation, the relevant face is the one whose zero coordinates are the
hyperplanes separating $C$ and $D$, and whose nonzero coordinates are the
common signs of $C$ and $D$.

\begin{theorem}[{\cite[Theorem 2.2]{Edelman1985}; see also \cite[Theorem 4.4.2]{Bjorner1999}}]\label{thm:interval_homotopy}
    Let $\A$ be a central hyperplane arrangement and let $C,D\in \Ch(\A)$ be distinct chambers.
    If the interval $[C,D]_{\Tope(\A)}$ is facial, that is,
    \[
    [C,D]_{\Tope(\A)}=\Ch(\A)_F
    \]
    for some $F\in \Face(\A)$, then the open interval $(C,D)_{\Tope(\A)}$ is homotopy equivalent to a sphere $S^{\codim F-2}$.
    Otherwise, the open interval $(C,D)_{\Tope(\A)}$ is contractible.
\end{theorem}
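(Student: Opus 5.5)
This is the Edelman--Walker theorem, and the approach I would take is the one in their paper: study the poset $[C,D]_{\Tope(\A)}$ through the geometry of the arrangement, using the order-theoretic description
\[
X\preceq_{C,D} Y \iff S(C,X)\subseteq S(C,Y)\subseteq S(C,D).
\]
In the first step I would identify $[C,D]$ with an intersection of ``halfspaces'' of chambers: a chamber $X$ lies in $[C,D]$ iff $X_H=C_H$ for all $H\notin S(C,D)$. Hence $[C,D]=\Ch(\A)_{G}$, where $G$ is the sign vector equal to $C_H$ off $S(C,D)$ and $0$ on $S(C,D)$, provided $G$ is a face. So the interval is facial iff this $G$ is an actual face; that is exactly the dichotomy in the statement.

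\textbf{Facial case.} Here $[C,D]=\Ch(\A)_F$ can be identified with $\Ch(\A_F)$. After projecting along $F$, $\A_F$ is an essential central arrangement of rank $c=\codim F$, and $C,D$ become opposite chambers. The poset of chambers ordered from $C$ is the weak order (poset of regions) based at $C$. I would then use the standard fact that the proper part of the poset of regions between opposite chambers is homotopy equivalent to the boundary of the zonotope/unit sphere cut by the arrangement minus two antipodal open cells, giving $S^{c-2}$. Concretely, I would apply a Quillen fiber / nerve argument. For each chamber $X$ in the open interval, consider the set of faces $F'$ with $F'\le X$ and $F'\circ C\ne X$, i.e.\ faces of $X$ ``seen from $C$''. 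These assemble into a regular cell complex on the unit sphere whose union is the complement of the open stars of $C$ and $D$, which is homeomorphic to $S^{c-2}\times$ an interval.

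\textbf{Non-facial case.} I would use the Björner--Edelman--Ziegler style argument: choose a generic linear functional, or equivalently a point, separating the configuration, and show the order complex collapses. Alternatively, I would induct on $\#S(C,D)$ via a closure operator. Since $G$ is not a face, the set of chambers $X\in(C,D)$ is covered by the subposets $[C,Y]$ for the coatoms $Y$ of the interval. I would show that any nonempty intersection of such lower intervals is itself an interval $[C,\bigvee]$ that is proper, hence contractible by its top element. The nerve theorem then reduces the claim to the nerve of the coatoms. The non-facial hypothesis should make this nerve a full simplex, because all coatoms share a common upper bound below $D$ in the lattice of sets $S(C,\cdot)$ realized geometrically.

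The main obstacle is the non-facial contractibility: proving the meet/join property of these intervals genuinely requires geometry (the poset of regions is not a lattice in general). What I would try first is the original topological proof. Realize $\Delta(C,D)$ as homotopy equivalent to the subcomplex of the dual sphere consisting of faces $F'$ with $C\circ$-conditions, i.e.\ the intersection of the sphere with the region $\{x: f_H(x)\,C_H>0 \text{ for } H\notin S(C,D)\}$ minus the stars. Then show this region is star-shaped (convex), hence contractible, exactly when $G$ fails to be a face, while it is a sphere minus two antipodal disks, i.e.\ $S^{c-2}$, when $G$ is a face.
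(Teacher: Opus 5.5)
The paper does not prove this result; it imports it from Edelman--Walker and the oriented matroid book. Judged as a stand-alone argument, your proposal has a genuine gap.

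Your first step is fine. Put $R=\{x : C_Hf_H(x)>0 \text{ for all } H\notin S(C,D)\}$. Then $[C,D]$ is the set of chambers contained in $R$, it is facial exactly when your sign vector $G$ is a face, and in that case one may pass to $\A_F$, where $C$ and $D$ become opposite.

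After that, the facial case rests on the ``standard fact'' that the proper part of the poset of regions between opposite chambers is $\simeq S^{c-2}$. That is just the statement itself for opposite chambers, and your ``concrete'' sketch does not establish it:
\begin{itemize}
\item You never write down an order-preserving map between $(C,D)$ (chambers ordered by inclusion of $S(C,\cdot)$, which are pairwise incomparable in $\Face(\A)$) and a poset of faces. So there is nothing to which a fiber or nerve lemma is applied.
\item The faces you single out do not fill the complement of $C\cup D$. Every nonzero proper face $F'<C$ satisfies $F'\leq X$ and $F'\circ C=C\neq X$ for some $X\in(C,D)$. A facet $F'$ of $D$ never qualifies, since its only other chamber is $F'\circ C$. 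For $c=2$ and $n$ lines you get $2n-2$ isolated rays, which is not homotopy equivalent to $S^0$ once $n\geq 3$.
\end{itemize}

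The nonfacial sketch also fails as stated. Distinct maximal elements of $(C,D)$ have no common upper bound below $D$. Intersections of the sets $(C,Y]$ are governed by meets (which need not exist), not joins. And the nerve need not be a simplex. For example, take $U_{3,4}$ with chambers $\{+,-\}^4\setminus\{\pm(+,+,+,+)\}$ (cf.\ \Cref{topegeneric}), $C=(+,+,+,-)$ and $D=(-,-,+,+)$:
\begin{itemize}
\item The interval is nonfacial because $H_1\cap H_2\cap H_4=\{0\}$.
\item Its maximal elements are $(-,-,+,-)$, $(-,+,+,+)$ and $(+,-,+,+)$.
\item The last two have disjoint lower sets $\{(-,+,+,-),(-,+,+,+)\}$ and $\{(+,-,+,-),(+,-,+,+)\}$, so the nerve is a path, not a simplex.
\end{itemize}
Likewise, $R$ is an open convex cone in \emph{both} cases, so convexity cannot detect faciality.

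What is missing is an actual mechanism. Here is one that works.
\begin{itemize}
\item Let $\mathcal Q$ be the set of faces $G\subseteq R$ with $G\not\leq C$ and $G\neq D$. Then $G\mapsto G\circ C$ is an order-preserving map $\mathcal Q\to(C,D)$.
\item Its fiber over $\{Y\succeq X\}$ consists of the faces other than $D$ lying in the open convex cone $R\cap\bigcap_{H\in S(C,X)}\{C_Hf_H<0\}$.
\item Contractibility of these fibers, and of $\Delta\mathcal Q$ itself in the nonfacial case, rests on a geometric lemma your proposal never mentions. Working on the unit sphere of an essential arrangement: for chambers $Y\neq\pm T$, the union of the closed facets of $T$ lying on walls that do not separate $T$ from $Y$ is a nonempty proper PL ball in $\partial T$. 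This is a Bruggesser--Mani visibility/shelling fact.
\item In the facial case, after localizing, $R$ is the whole space. Then $\mathcal Q$ is the face poset of the sphere with the closed cell $\overline C$ and the open cell $D$ removed, which gives $S^{c-2}$.
\end{itemize}
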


\section{Localization and periodicity}

First we prove a localization theorem used in the proof of the main theorem.
The theorem shows that the $\alpha$-summand $\MH_{k,\alpha}(\Tope(\A))_{\bullet\to B}$ either localizes to $\operatorname{supp}(\alpha)$ when this support is the localization at a face incident to $B$, or vanishes otherwise.

\begin{theorem}[Localization]\label{thm:localization}
Let $\A$ be an affine hyperplane arrangement and let $B\in \Ch(\A)$.
Let $\zero\neq \alpha\in\NN^{\oplus \A}$ and put $\A_{\alpha}=\supp(\alpha)$.
If there exists a face $F\in \Face(\A)_{<B}$ with $\A_F=\A_{\alpha}$, then
\[
    \MH_{k,\alpha}(\Tope(\A))_{\bullet\to B}
    \cong
    \MH_{k,\alpha|_{\A_\alpha}}(\Tope(\A_\alpha))_{\bullet\to B_\alpha},
\]
where $B_\alpha$ is the unique chamber of $\A_\alpha$ containing $B$.
Otherwise, we have
\[
    \MH_{k,\alpha}(\Tope(\A))_{\bullet\to B}=0.
\]
\end{theorem}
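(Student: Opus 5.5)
The plan is to compare chains in $\Tope(\A)$ with chains in $\Tope(\A_\alpha)$ through the restriction of sign vectors to $\A_\alpha$.

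\emph{Step 1: all chains stay in one region.} Let $\vec x=(x_0,\dots,x_k)\in P_{k,\alpha}(\Tope(\A))_{\bullet\to B}$. For $H\notin\A_\alpha$ we have $\alpha_H=0$, so no step $x_{i-1}\to x_i$ changes the $H$-coordinate. Since $x_k=B$, every $x_i$ therefore agrees with $B$ on $\A\setminus\A_\alpha$. Let $K$ be the chamber of $\A\setminus\A_\alpha$ containing $B$. Then every $x_i$ is a chamber of $\A$ contained in $K$.

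\emph{Step 2: identify these chambers with part of $\Tope(\A_\alpha)$.} Consider the map $\rho\colon C\mapsto C_\alpha$, sending a chamber to the chamber of $\A_\alpha$ containing it, restricted to chambers of $\A$ inside $K$. It is injective, because such a $C$ is determined by its $\A_\alpha$-signs together with the fixed signs of $B$. It preserves distance vectors after restriction to $\A_\alpha$, since the $\vec d$-coordinates outside $\A_\alpha$ vanish. The boundary condition $d(x_{i-1},x_{i+1})=d(x_{i-1},x_i)+d(x_i,x_{i+1})$ is a coordinatewise condition on distance vectors, so it is also preserved. Hence $\rho$ identifies $\MC_{\ast,\alpha}(\Tope(\A))_{\bullet\to B}$ with the subcomplex of $\MC_{\ast,\alpha|_{\A_\alpha}}(\Tope(\A_\alpha))_{\bullet\to B_\alpha}$ spanned by chains all of whose vertices lie in $S:=\{D\in\Ch(\A_\alpha)\mid D\cap K\neq\varnothing\}$.

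\emph{Step 3: the facial case.} Suppose a face $F<B$ has $\A_F=\A_\alpha$. Take $p$ in the relative interior of $F$ and a small ball $U$ around $p$. Then $U$ meets only hyperplanes of $\A_F$. Since $F\le B$, every point of $U$ has the same signs as $B$ on $\A\setminus\A_F$, so $U\subseteq K$. On the other hand, every chamber of the central arrangement $\A_\alpha=\A_F$ meets $U$, because $p$ lies on its center. Hence $S=\Ch(\A_\alpha)$, the identification in Step 2 is an isomorphism of complexes, and we obtain the first isomorphism.

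\emph{Step 4: the vanishing case (main obstacle).} Now $S$ is a proper subset of $\Ch(\A_\alpha)$, and we must show the restricted complex is acyclic. I would filter it by the penultimate vertex $y=x_{k-1}$, via the spectral sequence of the filtration by $\vec d(y,B)$. Its $E^1$-term is a sum of terms
\[
\MH_{\ast,\alpha-\vec d(y,B)}(\cdots)_{\bullet\to y}\otimes \widetilde H_\ast\bigl(\Delta(y,B)\bigr).
\]
By \Cref{thm:interval_homotopy}, applied after passing to the central localization at a point of the relevant face, non-facial intervals contribute nothing. Facial intervals $[y,B]=\Ch(\A)_G$ with $G\le B$ reduce, by induction on $|\alpha|$, to terms that are either already zero or localize to $\A_{\alpha-\vec d(y,B)}$. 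The hardest part is showing that the surviving contributions cancel whenever no face of $\overline B$ has localization exactly $\A_\alpha$. I would first try to prove directly that the relevant poset of partial chains is acyclic, for example by a homological Quillen fiber argument. If that fails, I would instead find a hyperplane $H\in\A_\alpha$ whose crossings can be contracted by a discrete-Morse matching, pairing each chain with the one that inserts or deletes a crossing of $H$ adjacent to $B$.
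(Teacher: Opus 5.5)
Your Steps 1--3 are correct and agree with the paper's treatment of the first case. The paper also confines every chain to the chamber $U$ of $\A\setminus\A_\alpha$ containing $B$, and notes that the chambers of $\A$ inside $U$ span a convex, hence isometric, subgraph. When $U\cap\bigcap\A_\alpha\neq\varnothing$, it identifies that subgraph isometrically with $\Tope(\A_\alpha)$.

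The gap is Step 4, which carries the whole content of the vanishing half of the statement, and which you do not prove. The $E^1$-term you write down does not come from the filtration you describe. If you filter by the length of the last step $(x_{k-1},B)$, the associated graded just gives $\bigoplus_y \MH_{\ast-1,\alpha-\vec d(y,B)}(\cdots)_{\bullet\to y}$, with no factor $\widetilde H_\ast(\Delta(y,B))$. To produce interval homology you would have to split off a maximal geodesic tail. But then the condition that the chain is not geodesic at the junction couples the two pieces, so the graded pieces are not plain tensor products. More importantly, you explicitly leave open the decisive point: that the surviving facial contributions cancel when no face of $\overline B$ has localization $\A_\alpha$. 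You offer two strategies (a Quillen-fiber argument or a discrete-Morse matching) without carrying out either.

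Your remark that $S$ is a proper subset of $\Ch(\A_\alpha)$ is also false in general. If $\A$ consists of two parallel hyperplanes and $\alpha\geq\one$, then $K$ is the whole space and $S=\Ch(\A_\alpha)$, yet no face satisfies $\A_F=\A_\alpha$. So the vanishing cannot come from confinement to $S$. It must come from $K\cap\bigcap\A_\alpha=\varnothing$, including the non-central case where this intersection is empty outright.

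The paper fills this step with an external result rather than a direct computation. The sign vectors of faces inside $U$, restricted to $\A_\alpha$, form a realizable conditional oriented matroid whose tope graph is $\Tope(\A)_{\subseteq U}$. Its antipodal subgraphs with full support $\A_\alpha$ correspond to faces lying in $\bigcap\A_\alpha$ and meeting $U$, and there are none. The vanishing theorem of Koizumi--Liu (Theorem 5.3 there) then gives $\MH_{k,\lvert\alpha\rvert}(\Tope(\A)_{\subseteq U};\A_\alpha)=0$. Since $\MH_{k,\alpha}(\Tope(\A))_{\bullet\to B}$ is a direct summand of this group, it vanishes. Without that theorem, or a complete substitute such as an explicit acyclic matching, the ``otherwise'' clause remains unproved.
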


\begin{proof}
Let $U$ be the unique chamber of $\A\setminus\A_\alpha$ containing $B$.
Consider a proper $k$-chain
\[
\vec{C}=(C_0,\ldots,C_k=B)\in P_{k,\alpha}(\Tope(\A))_{\bullet\to B}.
\]
Since $\vec{C}$ crosses no hyperplane in $\A\setminus\A_\alpha$, we have $C_0,\dots,C_k\subseteq U$.
Let $\Ch(\A)_{\subseteq U}$ denote the set of chambers of $\A$ contained in $U$, and let $\Tope(\A)_{\subseteq U}$ denote the induced subgraph of $\Tope(\A)$ spanned by $\Ch(\A)_{\subseteq U}$.
The subgraph $\Tope(\A)_{\subseteq U}$ is convex, hence isometric, in
$\Tope(\A)$.
Thus
\[
        \MC_{k,\alpha}(\Tope(\A))_{\bullet\to B}
        =
        \MC_{k,\alpha}(\Tope(\A)_{\subseteq U})_{\bullet\to B}.
\]

Let $\bigcap \A_\alpha$ denote the intersection of all hyperplanes of $\A_\alpha$.
Suppose first that $F=U\cap \bigcap \A_\alpha\neq\varnothing$; see \Cref{fig:localization}.
Then $F$ is the unique element of $\Face(\A)_{<B}$ with $\A_F=\A_\alpha$.
There is a bijection
\[
        \iota_F:\Ch(\A)_{\subseteq U}\xrightarrow{\sim}\Ch(\A_\alpha)
\]
which sends $C\in \Ch(\A)_{\subseteq U}$ to the unique chamber of $\A_\alpha$ containing $C$.
Moreover, $\iota_F$ is an isometry.
Thus $\iota_F$ identifies $\MH_{k,\alpha}(\Tope(\A))_{\bullet\to B}=\MH_{k,\alpha}(\Tope(\A)_{\subseteq U})_{\bullet\to B}$ with $\MH_{k,\alpha|_{\A_\alpha}}(\Tope(\A_\alpha))_{\bullet\to B_\alpha}$.

\begin{figure}[ht]
\begin{tikzpicture}[
    scale=0.8,
    line cap=round,
    line join=round,
    blackline/.style={black, line width=0.6pt},
    redline/.style={teal, line width=0.6pt}
]
% 特別な交点を先に指定する
\coordinate (T)  at (0,3.7);       % 上側の二重点
\coordinate (L)  at (-2.15,2.05); % 左上の三重点
\coordinate (R)  at ( 2.15,2.05); % 右上の三重点
\coordinate (BL) at (-1.25,-0.35);% 左下の三重点
\coordinate (BR) at ( 1.25,-0.35);% 右下の三重点
\coordinate (F) at ( 0,0.54);% 交点
\filldraw[black!10] (F) -- (L) -- (BL) -- cycle;
% 黒い直線配置
\draw[blackline]($(L)!-0.18!(T)$) -- ($(L)!1.18!(T)$);
\draw[blackline]($(R)!-0.18!(T)$) -- ($(R)!1.18!(T)$);
\draw[blackline]($(L)!-0.16!(BL)$) -- ($(L)!1.20!(BL)$);
\draw[blackline]($(R)!-0.16!(BR)$) -- ($(R)!1.20!(BR)$);
\draw[blackline]($(BL)!-0.30!(BR)$) -- ($(BL)!1.30!(BR)$);
% 赤い2直線
\draw[redline]($(L)!-0.14!(BR)$) -- ($(L)!1.14!(BR)$);
\draw[redline]($(BL)!-0.14!(R)$) -- ($(BL)!1.14!(R)$);
\draw[redline]($(L)!-0.14!(BR)$) -- ($(L)!1.14!(BR)$);
\draw[redline]($(BL)!-0.14!(R)$) -- ($(BL)!1.14!(R)$);
\fill[teal] (F) circle[radius=2.5pt];
% ラベル
\node at (-1.5,3.5) {$\mathcal A\setminus\mathcal A_{\alpha}$};
\node[teal] at (3.2,2.5) {$\mathcal A_{\alpha}$};
\node at (0,1.5) {$U$};
\node at (-1,0.55) {$B$};
\node[teal] at (0,0.1) {$F$};
\end{tikzpicture}
\caption{The first case in the proof of \Cref{thm:localization}}
\label{fig:localization}
\end{figure}

Suppose next that $U\cap \bigcap \A_\alpha=\varnothing$.
In this case, there is no face $F\in \Face(\A)_{<B}$ with $\A_F=\A_\alpha$.
We want to show that $\MH_{k,\alpha}(\Tope(\A))_{\bullet\to B}=0$.
The sign vectors of faces contained in $U$, restricted to the coordinates in
$\A_\alpha$, form a realizable conditional oriented matroid (COM) \cite{Bandelt2018}.
In particular, $\Tope(\A)_{\subseteq U}$ is the tope graph of a realizable COM\@.
As in the proof of \cite[Proposition 6.10]{Koizumi_Liu}, its antipodal subgraphs
supported on all of $\A_\alpha$ correspond to faces whose realization is contained in
$\bigcap\A_\alpha$ and meets $U$.  The assumption
$U\cap\bigcap\A_\alpha=\varnothing$ therefore excludes such a subgraph.  The
vanishing theorem \cite[Theorem 5.3]{Koizumi_Liu} now gives
\[
\MH_{k,\lvert\alpha\rvert}(\Tope(\A)_{\subseteq U};\A_\alpha)=0.
\]
Since the group $\MH_{k,\alpha}(\Tope(\A)_{\subseteq U})_{\bullet\to B}$ is a direct summand of $\MH_{k,\lvert\alpha\rvert}(\Tope(\A)_{\subseteq U};\A_\alpha)$, it also vanishes.
\end{proof}

Next we state a periodicity theorem for central hyperplane arrangements.
Computations in \cite{Koizumi_Liu} suggest a periodic pattern in the magnitude Betti numbers under the shift $(k,\ell)\mapsto(k+r,\ell+n)$, where
$r=\rank\A$ and $n=\#\A$.
This pattern does not give an isomorphism for each total-length summand.
After refining total length to the length vector, it becomes the following isomorphism.

\begin{theorem}[Periodicity]\label{thm:periodicity}
Let $\A$ be a central hyperplane arrangement of rank $r$, and fix a chamber $B\in \Ch(\A)$.
Let $\alpha\in\NN^{\oplus \A}$ satisfy $\supp(\alpha)=\A$ (equivalently, $\alpha\geq\one$).
Then
\[
    \MH_{k,\alpha}(\Tope(\A))_{\bullet\to B}
    \cong
    \MH_{k-r,\alpha-\one}(\Tope(\A))_{\bullet\to B},
\]
where the right-hand side is interpreted as zero for $k<r$.
\end{theorem}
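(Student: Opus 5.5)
We will prove the isomorphism by filtering the complex $\MC_{\ast,\alpha}(\Tope(\A))_{\bullet\to B}$ according to the first chamber $C_0$ and to the way the chain "wraps around" the central arrangement. Since $\A$ is central, every hyperplane separates each chamber $C$ from its opposite $-C$. Hence $\vec d(C,-C)=\one$, and $-C$ is the unique chamber at distance $n=\#\A$ from $C$.

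The first step is to describe the pieces combinatorially. Fix $C_0=A$. A chain $\vec C=(A=C_0,\dots,C_k=B)$ with $\vec\ell(\vec C)=\alpha$ determines the parity vector $\ol\alpha=\vec d(A,B)$. This already forces $A$ once $B$ and $\alpha$ are fixed, since a chamber is determined by its separation set from $B$. Hence
\[
\MC_{\ast,\alpha}(\Tope(\A))_{\bullet\to B}=\MC_{\ast,\alpha}(\Tope(\A))_{A\to B}
\]
for the single chamber $A$ with $S(A,B)=\supp(\ol\alpha)$.

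The chain complex $\MC_{\ast,\alpha}(\Tope(\A))_{A\to B}$ is then a generalized "interval complex." Consider the poset of partial length vectors $\beta\leq\alpha$ that are realized by prefixes of chains. Each such $\beta$ determines its endpoint chamber, namely the chamber $C$ with $S(A,C)=\supp(\ol\beta)$. Following the proof of \Cref{intervalMH}, I expect an isomorphism of $\MC_{\ast,\alpha}(\cdot)_{A\to B}$ with the shifted augmented chain complex of the order complex of a poset $Q_\alpha$. Here $Q_\alpha\subseteq\NN^{\oplus\A}_{\leq\alpha}$ consists of those $\beta$ with $\zero<\beta<\alpha$ such that the step $\beta\to\gamma$ is realizable by a single distance vector, i.e.\ $\gamma-\beta\in\{0,1\}^{\A}$ equals $\vec d$ of the corresponding chambers. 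The boundary $\partial_i$ should correspond to deleting an element exactly when two consecutive steps compose geodesically, which is precisely the covering/comparability condition in $Q_\alpha$. Concretely, the order complex must be taken of the poset whose chains of comparabilities are geodesic subdivisions. To do this cleanly, I would prove a homological Quillen-type lemma: a chain complex built from "multi-step" chains in a poset with a length function is quasi-isomorphic to the order-complex chains of the poset of lattice points.

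Next, compare $\alpha$ with $\alpha-\one$ via the map $\beta\mapsto\beta$ restricted to the subposet of points avoiding certain regions. The key geometric input is the Edelman--Walker theorem (\Cref{thm:interval_homotopy}) applied to each local interval $[C,D]$: a step $\vec d(C,D)$ contributes a sphere $S^{\codim F-2}$ if facial and is contractible otherwise. I would decompose $Q_\alpha$ into the subposet $Q'$ of $\beta$ with $\beta\not\geq\one$ and its complement. The summands of $\MH_{\ast,\alpha}$ where no intermediate point dominates $\one$ should be acyclic. This acyclicity is where the nonfacial contractibility enters, and it is the expected obstacle. Roughly, $\Delta Q_\alpha$ will be shown to be homotopy equivalent to a suspension-type join $\Delta Q_{\alpha-\one}*S^{r-1}$, with the sphere $S^{r-1}$ coming from the full wrap $C\to -C$. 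The interval $(C,-C)$ is facial with face the center, of codimension $r$, so it contributes $S^{r-2}$, and the join adds the two endpoints. This yields $\widetilde H_{i}(\Delta Q_\alpha)\cong\widetilde H_{i-r}(\Delta Q_{\alpha-\one})$, i.e.\ the degree shift by $r$.

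For the hardest step, the acyclicity of the part not containing a full antipodal crossing, I would first try Alexander duality inside the boundary sphere of the zonotope (the face poset of the arrangement minus the bottom). The subcomplex spanned by non-$\one$-dominating configurations is identified with the complement of a union of closed stars of faces. Its Alexander dual should be a cone, hence contractible. Failing that, I would use discrete Morse matching pairing chains by inserting or removing an antipode. Finally, the case $k<r$ is immediate, since the shifted complex lives in degrees $\geq r$ by the dimension count above.
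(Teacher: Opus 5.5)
Your outline has the right global shape: isolate the chains that reach $-C_0$ after crossing every hyperplane exactly once, get the shift by $r$ from the Edelman--Walker sphere $(A,-A)_{\Tope(\A)}\simeq S^{r-2}$, and show that everything else is acyclic. However, two structural steps are wrong as stated, and the decisive acyclicity step is missing.

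\textbf{The poset model.} No poset $Q_\alpha$ of the kind you describe exists. The condition $\gamma-\beta\in\{0,1\}^{\A}$ is not transitive. Moreover, the magnitude differential kills a deletion exactly when it produces a jump of size $\geq 2$ in some coordinate, whereas the simplicial boundary of an order complex keeps every deletion. The workable model is relative. Prefix length vectors $\zero=\lambda_0<\cdots<\lambda_k=\alpha$ identify chains starting at $A$ with certain simplices of the order complex $\Delta_\alpha$ of the box $\NN^{\oplus n}_{\leq\alpha}$ (a PL ball): those whose vertices have parity in $\Cset_A$ and which do not lie in $\partial\Delta_\alpha$. The non-geodesic deletions are exactly those landing in $\partial\Delta_\alpha$.

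\textbf{The splitting.} Splitting by ``$\beta\not\geq\one$'' isolates nothing. If $\alpha\geq 2\cdot\one$, the last interior vertex satisfies $\lambda_{k-1}\geq\alpha-\one\geq\one$, so your ``non-dominating'' part is zero. For $\alpha=\one$ it is the whole complex, whose homology is $\ZZ$ in degree $r$, not zero. The subcomplex that works consists of chains with no prefix length vector \emph{equal} to $\one$. The quotient by it is $\MC_{\ast,n}(\Tope(\A))_{A\to -A}\otimes\MC_{\ast,\alpha-\one}(\Tope(\A))_{-A\to B}$, because the boundary term deleting the gluing vertex is zero or non-special. K\"unneth then gives the shift.

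\textbf{Acyclicity of the non-special chains.} This is the heart of the theorem, and your proposal only gestures at it. The zonotope boundary sphere only sees paths crossing each hyperplane at most once, so it is unclear how chains with $\alpha_H\geq 2$ would even live there. You also give no reason why any dual complex should be a cone.

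What is needed is Alexander duality in the ball $\Delta_\alpha$ for the vertex partition $\NN^{\oplus n}_{\leq\alpha}=\Tpos_{A,\leq\alpha}\sqcup\Npos_{A,\leq\alpha}$. This turns the relative homology of the non-special part into $\widetilde H^\ast$ of the poset $\Npos_{A,\leq\alpha}=\{\beta\leq\alpha\mid\ol\beta\notin\Cset_A\}\cup\{\one\}$. One then needs a genuinely inductive proof that this poset is acyclic.

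The paper proves the stronger claim that $\Npos_{\leq\gamma}$ is acyclic whenever $\gamma\wedge q\in\Npos$ for \emph{some} chamber vector $q$; the strengthening is needed because the induction must switch from $q$ to $\ol\gamma$. The argument has three parts:
\begin{enumerate}
\item It glues the acyclic lower ideals $\Npos_{\leq\gamma-\one_X}$.
\item It extends over the remaining elements by an acyclic-carrier (Quillen fiber) lemma.
\item In the base case $\gamma\leq\one$, it uses the Tits product to show that the interval from $A$ to the chamber with sign vector $\gamma$ is non-facial, so Edelman--Walker contractibility applies.
\end{enumerate}
Without an argument of this strength, your plan does not prove the theorem.
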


The next two sections are devoted to the proof of \Cref{thm:periodicity}.

\section{Poset-topological lemmas}

The proof of the periodicity theorem reduces to the acyclicity of certain finite posets.
We record three elementary tools: an acyclic extension lemma, a gluing lemma for
lower ideals, and Alexander duality for PL balls.

\begin{lemma}[Acyclic extension lemma]\label{lem:homological-quillen}
Let $i\colon P\hookrightarrow Q$ be an inclusion of posets.  Suppose that for every
$x\in Q\setminus P$, the lower fiber
\[
        P_{\leq x}=\{p\in P\mid p\leq x\}
\]
is acyclic.  Then the inclusion $i\colon P\hookrightarrow Q$ induces isomorphisms on
reduced homology groups.
\end{lemma}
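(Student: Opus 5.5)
We will add the elements of $Q\setminus P$ one at a time and check that each step leaves reduced homology unchanged. In the finite case we fix a linear extension of $Q\setminus P$ compatible with the order of $Q$. Thus we list $Q\setminus P=\{x_1,\dots,x_N\}$ so that $x_i<x_j$ implies $i<j$, and we set $Q_j:=P\cup\{x_1,\dots,x_j\}$. The order complex $\Delta Q_j$ is obtained from $\Delta Q_{j-1}$ by adding the simplices that contain $x_j$. Because $x_j$ is maximal in $Q_j$, every such simplex is a chain lying in $(Q_j)_{<x_j}\cup\{x_j\}$. Hence
\[
\Delta Q_j=\Delta Q_{j-1}\cup_{\Delta (Q_{j-1})_{<x_j}} \bigl(x_j * \Delta (Q_{j-1})_{<x_j}\bigr),
\]
where $(Q_{j-1})_{<x_j}=(Q_j)_{<x_j}$ is the strict lower set of $x_j$ in $Q_{j-1}$. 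Since the cone is contractible, the Mayer--Vietoris sequence shows that $\Delta Q_{j-1}\hookrightarrow\Delta Q_j$ is a reduced-homology isomorphism as soon as $L_j:=(Q_{j-1})_{<x_j}$ is acyclic. This reduces the lemma to proving that each $L_j$ is acyclic.

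To prove this, observe that $L_j=Q_{<x_j}$, because every element of $Q\setminus P$ below $x_j$ occurs earlier in the list. Hence
\[
L_j=P_{\leq x_j}\cup\{x\in Q\setminus P: x<x_j\},
\]
using $P_{<x_j}=P_{\le x_j}$ for $x_j\notin P$. We now apply the same adjoining argument inside $L_j$. Start from $P_{\leq x_j}$, which is acyclic by hypothesis, and adjoin the elements $x_i<x_j$ of $Q\setminus P$ in increasing order. Each such $x_i$ has link $(\text{current poset})_{<x_i}=Q_{<x_i}=L_i$, and $L_i$ is acyclic by induction on $i$. Here the base case is that a minimal element $x_1$ of $Q\setminus P$ has $L_1=P_{\le x_1}$, which is acyclic. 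So each step preserves reduced homology, and $L_j$ has the same reduced homology as $P_{\le x_j}$, namely zero. This strong induction on $j$ (or on the height of $x_j$ within $Q\setminus P$) carries both claims at once: $L_j$ is acyclic, and $\Delta Q_{j-1}\to\Delta Q_j$ is an isomorphism. Composing the isomorphisms over $j$ gives the lemma when $Q\setminus P$ is finite.

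For infinite posets we use that simplicial homology commutes with filtered colimits. Every finite chain of $Q$, and every cycle, lies in $\Delta Q'$ for some $Q'=P\cup S$ with $S\subseteq Q\setminus P$ finite and downward closed in $Q\setminus P$. For finite downward closed $S$ the argument above applies verbatim: the lower fibers are unchanged, and the induction uses only elements below the current one, which lie in $S$. If the sets $\{x\in Q\setminus P: x<y\}$ can be infinite, we first prove acyclicity of every $Q_{<y}$ by well-founded induction, assuming local finiteness. Without that assumption, we instead take the colimit over all finite subsets $S$ and apply the finite argument to $P\cup S$ in place of $Q$. The hypothesis then has to hold for the fibers of $P\cup S$, and it does, since $P_{\le x}$ depends only on $P$ and $x$. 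Taking $S$ arbitrary finite (not necessarily downward closed), the finite argument still gives $\widetilde H(\Delta P)\cong\widetilde H(\Delta(P\cup S))$ for every finite $S$. Passing to the colimit over $S$ yields the isomorphism for $Q$, which resolves the infinite case cleanly.

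The main obstacle is the nested induction showing that the links $Q_{<x}$ are acyclic rather than merely $P_{\le x}$. The key point is to order the additions by a linear extension, so that each link is built from $P_{\le x}$ by steps whose own links are earlier, already-acyclic sets. Everything else is Mayer--Vietoris with a cone.
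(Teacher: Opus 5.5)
There is a genuine gap in the inductive step. You claim that $x_j$ is maximal in $Q_j=P\cup\{x_1,\dots,x_j\}$, but this can fail. Your linear extension only orders the elements of $Q\setminus P$, and the lemma is stated for an arbitrary subposet $P\subseteq Q$, so elements of $P$ may lie above $x_j$.

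Take $Q=\{a<b<c\}$ and $P=\{a,c\}$, where $P_{\le b}=\{a\}$ is acyclic. Here $\Delta Q$ is a $2$-simplex. Your displayed decomposition gives only the union of the edges $\{a,c\}$ and $\{a,b\}$.

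In general, the simplices through $x_j$ are chains in $Q_{<x_j}\cup\{x_j\}\cup P_{>x_j}$. So the link of $x_j$ in $\Delta Q_j$ is the join $\Delta(Q_{<x_j})*\Delta(P_{>x_j})$, not $\Delta L_j$. The same error recurs in your inner induction: adjoining $x_i<x_j$ has link $\Delta(Q_{<x_i})*\Delta(\{p\in P\mid x_i<p<x_j\})$. As written, your Mayer--Vietoris step computes the wrong complex.

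The argument can be repaired with one extra fact: the join of an acyclic complex with an arbitrary, possibly empty, complex is acyclic. This holds because the augmented chain complex of $A*B$ is, after a shift by one, $\widetilde C(A)\otimes\widetilde C(B)$, and a free acyclic augmented complex is contractible. Once the links are corrected, the rest of your argument goes through:
\begin{itemize}
\item the nested induction showing each $Q_{<x_j}$ is acyclic;
\item the Mayer--Vietoris step with a cone;
\item the final colimit over arbitrary finite $S\subseteq Q\setminus P$.
\end{itemize}
The detours through downward-closed $S$ and local finiteness are unnecessary. In the paper's only application, $P=W$ is a lower ideal of $\Npos_{\le\gamma}$, so your maximality claim happens to hold there, but not for the lemma as stated.

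The paper takes a different route, using an acyclic carrier. It assigns to each simplex $\sigma$ of $\Delta Q$ the complex $\Delta P_{\le m(\sigma)}$, where $m(\sigma)$ is the top vertex of $\sigma$. This yields a chain homotopy inverse to $i_\#$ directly, for arbitrary $P$ and possibly infinite $Q$. It needs no links, no linear extension, and no colimit argument.
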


\begin{proof}
Put $K=\Delta Q$ and $L=\Delta P$.  For a nonempty simplex
$\sigma=(q_0<\cdots<q_m)$ of $K$, set $m(\sigma)=q_m$ and define
\[
        \Phi(\sigma):=\Delta P_{\leq m(\sigma)}\subseteq L.
\]
This complex is acyclic.  Indeed, if $m(\sigma)\in P$, then
$P_{\leq m(\sigma)}$ has the maximum element $m(\sigma)$; if
$m(\sigma)\notin P$, this is exactly the hypothesis.  Moreover, if
$\tau\leq\sigma$ is a face, then $m(\tau)\leq m(\sigma)$, and hence
$\Phi(\tau)\subseteq\Phi(\sigma)$.  Thus $\Phi$ is an acyclic carrier from
$K$ to $L$.
By the augmented acyclic carrier theorem
\cite[Theorem~13.3]{Munkres},
there exists an
augmentation-preserving chain map
\[
        r:C_\ast(K)\longrightarrow C_\ast(L)
\]
carried by $\Phi$.  The maps $r\circ i_\#$ and
$\mathrm{id}_{C_\ast(L)}$ are carried by the same acyclic carrier on $L$, so they
are chain homotopic.  Similarly, the identity on $C_\ast(K)$ and $i_\#\circ r$
are both carried by the carrier
\[
        \Psi(\sigma)=\Delta Q_{\leq m(\sigma)},
\]
whose values are cones.  Hence $i_\#\circ r\simeq\mathrm{id}_{C_\ast(K)}$.
The homotopies preserve augmentations, and therefore $i_\ast$ is an isomorphism
on reduced homology.
\end{proof}

\begin{lemma}[Gluing lemma]\label{lem:gluing}
Let $P$ be a poset, and let $\Sigma$ be a nonempty finite family of lower
ideals of $P$.  Assume that $\Sigma$ is closed under pairwise intersections.  If every
$Q\in\Sigma$ is acyclic, then
$\bigcup_{Q\in\Sigma}Q$
is acyclic.
\end{lemma}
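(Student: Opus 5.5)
We argue by induction on the number of members of $\Sigma$, strengthened to cover every finite family of lower ideals closed under pairwise intersections and consisting of acyclic posets. The order complex of a union of lower ideals is the union of their order complexes, because a chain lies in $\bigcup_{Q\in\Sigma}Q$ only if it lies in the single ideal containing its top element. We also have $\Delta(Q\cap Q')=\Delta Q\cap\Delta Q'$. Hence Mayer--Vietoris for simplicial subcomplexes applies directly.

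The base case $\#\Sigma=1$ is the hypothesis. For the inductive step, choose $Q_0\in\Sigma$ that is maximal under inclusion, and put $\Sigma'=\Sigma\setminus\{Q_0\}$. We may assume $\Sigma'\neq\varnothing$.

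The family $\Sigma'$ is still closed under pairwise intersections. Indeed, if $Q,Q'\in\Sigma'$, then $Q\cap Q'\in\Sigma$. If $Q\cap Q'=Q_0$, then $Q_0\subseteq Q$, and maximality forces $Q=Q_0$, a contradiction. So $X:=\bigcup\Sigma'$ is acyclic by induction.

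Next consider $Q_0\cap X=\bigcup_{Q\in\Sigma'}(Q_0\cap Q)$. Set $\Sigma''=\{Q_0\cap Q\mid Q\in\Sigma'\}\subseteq\Sigma$. It consists of acyclic lower ideals and is closed under pairwise intersections, since $(Q_0\cap Q)\cap(Q_0\cap Q')=Q_0\cap(Q\cap Q')$ with $Q\cap Q'\in\Sigma$. If $Q\cap Q'=Q_0$, this element is $Q_0$ itself. That case cannot occur, because it would give $Q_0\subseteq Q$, contradicting maximality.

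The main obstacle is that $\Sigma''$ need not be smaller than $\Sigma$ in number of members. To fix this, make the induction on the total size $\#\Sigma$ and note $\Sigma''\subseteq\Sigma\setminus\{Q_0\}$. Each $Q_0\cap Q$ is a member of $\Sigma$, and none equals $Q_0$: otherwise $Q_0\subseteq Q$ with $Q\neq Q_0$, contradicting maximality. Hence $\#\Sigma''<\#\Sigma$ and $\Sigma''$ is nonempty, so $Q_0\cap X$ is acyclic by induction.

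Finally, the reduced Mayer--Vietoris sequence for $\Delta Q_0\cup\Delta X$ applies. Both pieces and their intersection $\Delta(Q_0\cap X)$ are nonempty and acyclic, so the sequence gives $\widetilde H_\ast(\Delta\bigcup\Sigma)=0$. Nonemptiness is what makes reduced Mayer--Vietoris valid here: acyclic posets are nonempty by the convention that $\widetilde H_{-1}(\varnothing)=\ZZ$.
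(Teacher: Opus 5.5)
Your proof is correct and is essentially the paper's argument: induct on $\#\Sigma$, remove an inclusion-maximal member $Q_0$, apply induction to $\bigcup\Sigma'$ and to $Q_0\cap\bigcup\Sigma'$, and conclude with reduced Mayer--Vietoris, using that a chain in a union of lower ideals lies in a single ideal. You also check that $\Sigma'$ stays closed under intersections, which the paper leaves implicit; the size worry about $\Sigma''$ is not a real obstacle, since $\Sigma''$ is the image of $\Sigma'$ under $Q\mapsto Q_0\cap Q$ and so has at most $\#\Sigma-1$ members automatically.
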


\begin{proof}
Write $\Sigma=\{Q_1,\ldots,Q_m\}$ and argue by induction on $m$.
The case $m=1$ is clear.
Let $m\geq 2$.
We may assume that $Q_m$ is maximal among $Q_1,\dots,Q_m$.
Put $I=Q_1\cup\cdots\cup Q_{m-1}$.
By induction, $I$ is acyclic.  Also
\[
        I\cap Q_m=\bigcup_{i=1}^{m-1}(Q_i\cap Q_m),
\]
and the family $\{Q_i\cap Q_m\}_{i=1}^{m-1}$ is again closed under
pairwise intersections.  Thus $I\cap Q_m$ is acyclic by induction.
Since $I$ and $Q_m$ are lower ideals, every chain in $I\cup Q_m$ is contained
in either $I$ or $Q_m$.  Hence
\[
        \Delta(I\cup Q_m)=\Delta I\cup\Delta Q_m,
        \quad
        \Delta I\cap\Delta Q_m=\Delta(I\cap Q_m).
\]
The reduced Mayer--Vietoris sequence now gives the claim.
\end{proof}

\begin{lemma}[Alexander duality for PL balls]
\label{lem:alexander_ball}
Let $\Delta$ be a finite simplicial complex whose realization is a
$d$-dimensional PL ball.  Let
$V(\Delta)=X\sqcup Y$
be a partition of its vertex set, and let $\Delta_X$ and $\Delta_Y$ be the
induced subcomplexes on $X$ and $Y$.  Then, for every integer $k$, there is an isomorphism
\[
        H_k(\Delta_X,\Delta_X\cap\partial\Delta)
        \cong
        \widetilde H^{\,d-k-1}(\Delta_Y).
\]
\end{lemma}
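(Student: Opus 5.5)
The strategy is to combine Lefschetz duality for the manifold-with-boundary $|\Delta|$ with the standard deformation retractions between complementary induced subcomplexes. Write $B=|\Delta|$, $\partial B=|\partial\Delta|$, $A=|\Delta_X|$ and $C=|\Delta_Y|$.

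\emph{Step 1: duality.} Since $B$ is a compact orientable $d$-manifold with boundary (orientable because a ball is contractible), and $(A,A\cap\partial B)$ is a compact polyhedral pair, I would apply the relative form of Lefschetz--Alexander duality. It gives
\[
H_k(A,A\cap\partial B)\cong \check H^{d-k}(B,\,(B\setminus A)\cup \partial B),
\]
with \v{C}ech cohomology, which agrees with singular cohomology up to replacing $B\setminus A$ by a polyhedral neighbourhood. One source for this form is Dold's duality theorem for manifolds with boundary. Alternatively, I would prove it directly by passing to the barycentric subdivision $\Delta'$ and using dual cells. In a PL $d$-ball, the dual blocks of simplices not in $\partial\Delta$ form a cell structure that computes the cohomology rel boundary. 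Chains of $\Delta_X$ rel $\Delta_X\cap\partial\Delta$ then correspond to cochains on the dual blocks of simplices of $\Delta_X$.

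\emph{Step 2: identify the complement.} The key combinatorial fact is that $|\Delta|\setminus|\Delta_X|$ deformation retracts onto $|\Delta_Y|$ by straight-line homotopy. A point of $|\Delta|$ outside $|\Delta_X|$ lies in a simplex $\sigma=\sigma_X*\sigma_Y$ with $\sigma_Y\neq\varnothing$, and we push it linearly away from $\sigma_X$ onto $\sigma_Y$. These pushes are compatible across faces, so the retraction is well defined. Consequently $(B\setminus A)\cup\partial B$ is homotopy equivalent to $C\cup\partial B$ compatibly, provided the retraction can be done preserving $\partial B$, which holds because $\partial\Delta$ is a subcomplex and the push respects subcomplexes. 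So the right side of Step 1 becomes $H^{d-k}(B,\,C\cup\partial B)$.

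\emph{Step 3: long exact sequence.} Use the triple $(B,\,C\cup\partial B,\,\partial B)$. Since $(B,\partial B)$ is a $d$-ball with its boundary sphere, $H^\ast(B,\partial B)$ is $\mathbb Z$ in degree $d$ and zero otherwise. The relative group $H^\ast(C\cup\partial B,\partial B)\cong H^\ast(C,C\cap\partial B)$ also enters, and this is not obviously $\widetilde H^\ast(C)$. This is the main obstacle, and I suspect the cleaner route is to apply duality the other way round. Plain Alexander duality in the sphere $S^d=B/\partial B$, or in the double of $B$, pairs $H_k(A,A\cap\partial B)\cong \widetilde H_k(A/(A\cap\partial B))$ with the reduced cohomology of the complement $S^d\setminus (A/(A\cap\partial B))$. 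By the retraction of Step 2 carried out inside $B$, this complement is the open set $(B\setminus A)\setminus\partial B$, which deformation retracts onto $C\setminus\partial B$. I would then argue that $C\setminus \partial B\simeq C$: collar $\partial B$ in the subdivision $\Delta'$ and push $C$ off the boundary into the interior. Alexander duality in $S^d$ gives $\widetilde H_k(\,\cdot\,)\cong\widetilde H^{d-k-1}(\text{complement})$, which yields the claim $\widetilde H^{d-k-1}(\Delta_Y)$, with the degree shift matching the statement.

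The delicate point is therefore the collar argument, namely that removing $\partial B$ from $C$ does not change homotopy type. The case where $A$ contains all of $\partial B$ (so $C\cap\partial B=\varnothing$) is immediate. In general I would use a PL collar $\partial B\times[0,1)\subseteq B$ chosen compatibly with the induced subcomplexes after subdivision, which exists because $\Delta$ is a PL ball.
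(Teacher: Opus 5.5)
\textbf{There is a genuine gap.} Your final route is Alexander duality in $S^d=B/\partial B$, with compact set the image of $A\cup\partial B$ and open complement $\operatorname{int}B\setminus A$. That is fine in outline, but the step identifying the complement with $C$ rests on two false claims.
\begin{enumerate}
\item The straight-line push does not preserve $\operatorname{int}B$. A point of $\operatorname{int}B$ in the open simplex $\sigma_X*\sigma_Y$ lands at time $1$ in $\sigma_Y$, and $\sigma_Y$ may well lie in $\partial\Delta$.
\item $C\setminus\partial B\simeq C$ fails in general. Take $\Delta$ a single edge with $X=\{x\}$, $Y=\{y\}$. Then $C\setminus\partial B=\varnothing$, whereas $C$ is a point and $\operatorname{int}B\setminus A$ is an open interval, which cannot deformation retract onto $C\setminus\partial B$.
\end{enumerate}
This is not a corner case: in the paper's application to $\Delta_\alpha$ with $\alpha\geq\one$, every vertex lies on $\partial\Delta_\alpha$. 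The same example already refutes Step 2, since $(B\setminus A)\cup\partial B=B$ is contractible while $C\cup\partial B$ is two points.

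What you actually need is that the inclusion $\operatorname{int}B\setminus A\hookrightarrow B\setminus A$ is a homotopy equivalence. That is, you must remove $\partial B$ from the complement of $A$, not from $C$. This requires a collar of $\partial B$ compatible with $A$, which you only assert. Two further points are unaddressed. You use Alexander duality with the roles reversed ($\widetilde H_k$ of the compact set against $\widetilde H^{d-k-1}$ of the open complement), which is not the standard statement and needs separate justification. The case $Y=\varnothing$, where the compact set is all of $S^d$, must also be treated separately.

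\textbf{The fix is to cone off the boundary instead of collapsing it.} This is the paper's route.
\begin{itemize}
\item $\Sigma=\Delta\cup_{\partial\Delta}(v*\partial\Delta)$ is a simplicial $d$-sphere; this is where the PL hypothesis is used. The sets $X\cup\{v\}$ and $Y$ partition its vertex set.
\item The induced subcomplex on $Y$ is still $\Delta_Y$.
\item The induced subcomplex on $X\cup\{v\}$ is $\Delta_X\cup v*(\Delta_X\cap\partial\Delta)$. Its reduced homology is $H_k(\Delta_X,\Delta_X\cap\partial\Delta)$. When $\Delta_X\cap\partial\Delta=\varnothing$, $v$ is an isolated point, which makes $\widetilde H_0$ agree with the unreduced $H_0(\Delta_X)$.
\item Your Step 2 retraction, applied inside $\Sigma$, gives $|\Sigma|\setminus|\Delta_Y|\simeq|\Sigma_{X\cup\{v\}}|$.
\item Alexander duality in its standard form, with compact set $|\Delta_Y|$, then finishes the proof.
\end{itemize}
No collar, no Lefschetz duality, and no reversal of duality are needed, so Steps 1 and 3 can be discarded.
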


\begin{proof}
If $Y=\varnothing$, then $\Delta_X=\Delta$ and
\[
H_k(\Delta,\partial\Delta)
\cong
\begin{cases}
\ZZ,&k=d;\\
0,&k\neq d,
\end{cases}
\]
while $\widetilde H^{d-k-1}(\varnothing)$ has the same value under the convention
$\widetilde H^{-1}(\varnothing)=\ZZ$.  Thus the claim holds in this case.
Assume henceforth that $Y\neq\varnothing$.
Let $v$ be a new vertex and form
\[
        \Sigma:=\Delta\cup_{\partial\Delta}(v\ast\partial\Delta).
\]
Then $\Sigma$ is a triangulated $d$-sphere.  Put $U=X\cup\{v\}$.  The induced
subcomplex of $\Sigma$ on $Y$ is $\Delta_Y$, and the induced subcomplex on
$U$ is
\[
        \Sigma_U=
        \Delta_X\cup_{\Delta_X\cap\partial\Delta}
        \bigl(v\ast(\Delta_X\cap\partial\Delta)\bigr).
\]
Thus $\Sigma_U$ is obtained from $\Delta_X$ by coning off
$\Delta_X\cap\partial\Delta$, and therefore
\[
        \widetilde H_k(\Sigma_U)
        \cong
        H_k(\Delta_X,\Delta_X\cap\partial\Delta).
\]

Since $\Sigma_U$ is the induced subcomplex on the complementary vertex set $U=V(\Sigma)\setminus Y$, the usual deformation retraction
$|\Sigma|\setminus |\Delta_Y|\simeq |\Sigma_U|$ applies.  Alexander duality in
the sphere $|\Sigma|$ \cite[Corollary~3.45]{Hatcher} gives
\[
        \widetilde H_k(|\Sigma|\setminus |\Delta_Y|)
        \cong
        \widetilde H^{\,d-k-1}(\Delta_Y).
\]
Combining the preceding isomorphisms proves the lemma.
\end{proof}

\section{Proof of the periodicity theorem}

We now prove \Cref{thm:periodicity}.
We first decompose the magnitude chain complex with length vector $\alpha$ into two parts: the special part and the non-special part.  
The homology of the special quotient is isomorphic to the homology with length vector $\alpha-\one$, shifted by $r$.
The difficult step is to show that the non-special part is acyclic.  
This part is handled using the poset-topological lemmas proved in the previous section together with an inductive argument.

\subsection{Special chains}
Fix a central hyperplane arrangement $\A=\{H_1,\dots,H_n\}$ with $\rank\A=r$.
We also fix a chamber $B\in \Ch(\A)$ and a vector $\alpha\in\NN^{\oplus n}$ with $\alpha\geq\one$.
If $\A=\varnothing$, then $r=0$, $\alpha=\zero$, and the periodicity theorem is the
identity isomorphism.  We therefore assume throughout this section that $n>0$.

\begin{definition}
    Let $\vec C=(C_0,\ldots,C_k)\in P_{k,\alpha}(\Tope(\A))$.
    Put
    \[
            \lambda_i:=\vec{\ell}(C_0,\ldots,C_i),
            \quad 0\leq i\leq k.
    \]
    The chain $\vec C$ is called \emph{special} if $\lambda_i=\one$ for some
    $i$.
    Otherwise it is called \emph{non-special}.
    Let $N_{k,\alpha}(\Tope(\A))$ be the subgroup of $\MC_{k,\alpha}(\Tope(\A))$ generated by non-special chains.
    Define $N_{k,\alpha}(\Tope(\A))_{A\to \bullet}$, $N_{k,\alpha}(\Tope(\A))_{\bullet\to B}$, and $N_{k,\alpha}(\Tope(\A))_{A\to B}$ in a similar manner.
\end{definition}

Since the vectors $\lambda_i$ are strictly increasing in the coordinatewise
order, the index $i$ such that $\lambda_i=\one$, when it exists, is unique.

\begin{lemma}\label{lem:nonspecial_subcomplex}
$N_{\ast,\alpha}(\Tope(\A))$ is a subcomplex of $\MC_{\ast,\alpha}(\Tope(\A))$.
\end{lemma}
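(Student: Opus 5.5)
We need to show that the boundary of a non-special chain is a combination of non-special chains. Since $\partial=\sum_i(-1)^i\partial_i$, it suffices to check each face map $\partial_i$ individually. So fix a non-special chain $\vec C=(C_0,\dots,C_k)\in P_{k,\alpha}(\Tope(\A))$ and an index $1\leq i\leq k-1$ with $\partial_i\vec C\neq 0$. We will show that the chain $\partial_i\vec C=(C_0,\dots,\widehat{C_i},\dots,C_k)$ is again non-special.

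First we compare prefix length vectors. The face $\partial_i\vec C$ is nonzero exactly when $d(C_{i-1},C_{i+1})=d(C_{i-1},C_i)+d(C_i,C_{i+1})$. In a partial cube this equality upgrades to the vector identity
\[
\vec d(C_{i-1},C_{i+1})=\vec d(C_{i-1},C_i)+\vec d(C_i,C_{i+1}).
\]
Indeed, coordinatewise we always have the inequality $\vec d(C_{i-1},C_{i+1})_H\leq \vec d(C_{i-1},C_i)_H+\vec d(C_i,C_{i+1})_H$, since a hyperplane separating $C_{i-1}$ and $C_{i+1}$ separates one of them from $C_i$. Summing over coordinates turns this into equality of totals, which forces equality in every coordinate.

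It follows that $\partial_i\vec C$ still has length vector $\alpha$, so it lies in $\MC_{k-1,\alpha}(\Tope(\A))$. Moreover, its prefix vectors $\lambda'_j$ are obtained from the sequence $(\lambda_0,\dots,\lambda_k)$ of $\vec C$ by deleting the single entry $\lambda_i$:
\begin{itemize}
\item $\lambda'_j=\lambda_j$ for $j<i$;
\item $\lambda'_j=\lambda_{j+1}$ for $j\geq i$.
\end{itemize}

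Now the conclusion is immediate. If some $\lambda'_j=\one$, then some $\lambda_m=\one$ for the original chain, contradicting the assumption that $\vec C$ is non-special. Hence $\partial_i\vec C$ is non-special.

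Therefore $\partial$ maps $N_{k,\alpha}(\Tope(\A))$ into $N_{k-1,\alpha}(\Tope(\A))$. The same argument applies verbatim to the versions with fixed endpoints, since $\partial_i$ with $1\leq i\leq k-1$ never deletes $C_0$ or $C_k$.

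There is no serious obstacle. The only point requiring care is the vector-valued triangle equality, which holds because $\Tope(\A)\hookrightarrow\{+,-\}^{\A}$ is an isometric embedding.
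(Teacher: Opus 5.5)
Your proposal is correct and follows the paper's proof essentially verbatim: the metric equality for a nonzero face upgrades to the vector identity $\vec d(C_{i-1},C_{i+1})=\vec d(C_{i-1},C_i)+\vec d(C_i,C_{i+1})$, so the length vector $\alpha$ is preserved. Every prefix of $\partial_i\vec C$ is a prefix of $\vec C$, so no prefix equals $\one$. Your coordinatewise triangle-inequality argument is a slightly more explicit justification of the vector identity than the paper's one-line remark that the two crossing sets are disjoint.
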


\begin{proof}
Let $\vec C=(C_0,\ldots,C_k)$ be non-special, and suppose that
\[
    d(C_{i-1},C_{i+1})
    =d(C_{i-1},C_i)+d(C_i,C_{i+1}).
\]
This means that the two sets of hyperplanes crossed in the two adjacent steps
are disjoint.
Thus
\[
    \vec{d}(C_{i-1},C_{i+1})
    =\vec{d}(C_{i-1},C_i)+\vec{d}(C_i,C_{i+1}).
\]
Hence deleting $C_i$ preserves the total length vector $\alpha$.
It cannot create a prefix with cumulative length vector $\one$, because every prefix in the shortened chain is already a prefix in the original chain.
Therefore all nonzero boundary terms of a non-special chain are non-special.
\end{proof}

\begin{definition}
Define the quotient complex
\[
        Sp_{\ast,\alpha}(\Tope(\A)):=\MC_{\ast,\alpha}(\Tope(\A))/N_{\ast,\alpha}(\Tope(\A)).
\]
We also define $Sp_{\ast,\alpha}(\Tope(\A))_{A\to \bullet}$, $Sp_{\ast,\alpha}(\Tope(\A))_{\bullet\to B}$, and $Sp_{\ast,\alpha}(\Tope(\A))_{A\to B}$ in a similar manner.
\end{definition}

\begin{proposition}\label{prop:tensor_decomp}
There is a natural isomorphism of chain complexes
\[
        Sp_{\ast,\alpha}(\Tope(\A))_{A\to B}
        \cong
        \MC_{\ast,n}(\Tope(\A))_{A\to -A}
        \otimes
        \MC_{\ast,\alpha-\one}(\Tope(\A))_{-A\to B}.
\]
\end{proposition}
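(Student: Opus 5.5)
The isomorphism will be the splitting map at the unique special vertex. A special chain $\vec C=(C_0,\dots,C_k)$ from $A$ to $B$ has a unique index $i$ with $\lambda_i=\one$. Since $\lambda_i=\one$ means every hyperplane is crossed exactly once between $C_0=A$ and $C_i$, the prefix $(C_0,\dots,C_i)$ is a geodesic chain of length $n$. Hence $C_i=-A$, and the prefix lies in $P_{i,n}(\Tope(\A))_{A\to -A}$. Conversely, the suffix $(C_i,\dots,C_k)$ has length vector $\alpha-\one$ and lies in $P_{k-i,\alpha-\one}(\Tope(\A))_{-A\to B}$. I will define
\[
\Phi(\vec C)=(C_0,\dots,C_i)\otimes(C_i,\dots,C_k),
\]
possibly with a sign to be fixed below. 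Conversely, concatenating a geodesic chain $A\to -A$ of length $n$ with a chain $-A\to B$ of length vector $\alpha-\one$ gives a proper chain whose prefix at the junction has $\lambda=\one$. Both pieces are proper, so the concatenation is proper, and it is special. Thus $\Phi$ is a bijection on generators, hence an isomorphism of graded groups with total degree $i+(k-i)=k$. Note that every chain in $\MC_{\ast,n}(\Tope(\A))_{A\to -A}$ is automatically geodesic and has length vector $\one$, so these chains are exactly the possible prefixes.

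The main step is compatibility with the differential on the quotient. Take $\partial_j\vec C$ for $0<j<k$ and split into three cases.
\begin{itemize}
\item If $j<i$, then $\partial_j$ deletes a vertex inside the prefix. The deletion condition on $\vec C$ coincides with that in the prefix, and the result is still special with junction $i-1$. This matches $\partial_j$ on the first tensor factor.
\item If $j>i$, then $\partial_j$ deletes a vertex inside the suffix. The prefix is unchanged, so the result is special, and this matches the boundary of the second factor, with index shifted by $i$.
\item If $j=i$, the junction vertex $C_i=-A$ itself is deleted. I must show this term is either zero or non-special, so that it vanishes in $Sp$.
\end{itemize}

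The case $j=i$ is the main obstacle. If the deletion is nonzero, then the hyperplane sets crossed by $(C_{i-1},C_i)$ and by $(C_i,C_{i+1})$ are disjoint, and the new chain has prefix lengths $\lambda_0,\dots,\lambda_{i-1},\lambda_{i+1},\dots$. Here $\lambda_{i-1}<\one<\lambda_{i+1}$ strictly, since the steps are proper. None of these equals $\one$, because the $\lambda$'s strictly increase and only $\lambda_i$ equals $\one$. So the term is non-special and dies in the quotient. This also shows the quotient differential is well defined on special generators.

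The boundary of the first factor has terms only for $0<j<i$, since the endpoints $C_0$ and $C_i$ are never deleted. Likewise the second factor's boundary deletes only the interior vertices $C_{i+1},\dots,C_{k-1}$. So the remaining interior terms of $\partial\vec C$ correspond exactly to $\partial\otimes1\pm1\otimes\partial$. The signs follow the Koszul rule: $(-1)^j$ for $j>i$ equals $(-1)^i(-1)^{j-i}$, which is precisely the Koszul sign $(-1)^{\deg x}$ with $\deg x=i$. So $\Phi$ is a chain map with no extra sign needed, and being bijective on bases it is an isomorphism of chain complexes. Naturality is clear because the construction is canonical.
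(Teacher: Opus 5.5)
Your proposal is correct and follows essentially the same route as the paper. You split a special chain at the unique vertex with $\lambda_i=\one$, which must be $-A$; you show the boundary term deleting this gluing vertex is either zero or non-special and so vanishes in $Sp$; and you match the remaining terms with the Koszul-signed tensor differential, handling the junction case slightly more explicitly than the paper does.
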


\begin{proof}
A special chain $\vec C=(A=C_0,\ldots,C_k=B)$ has a unique index $j$ such that
\[
        \vec{\ell}(C_0,\ldots,C_j)=\one.
\]
Then $C_j=-A$, the first part $(A=C_0,\ldots,C_j=-A)$ is a length-$n$ chain from $A$ to $-A$, and the second part $(-A=C_j,\ldots,C_k=B)$ is a chain from $-A$ to $B$ with length vector $\alpha-\one$.
Conversely, concatenating such two chains gives a special
chain with length vector $\alpha$.
Note that every length-$n$ chain from $A$ to $-A$ is geodesic and has length vector $\one$.

In the quotient by non-special chains, the boundary term deleting the gluing
vertex $C_j$ is zero: it is either zero already, or it is non-special.  Thus
\[
\begin{aligned}
        \partial\vec C
        &\equiv
        \sum_{i=1}^{j-1}(-1)^i\partial_i\vec C
        +(-1)^j\sum_{i=1}^{k-j-1}(-1)^i\partial_{i+j}\vec C
        \quad\text{mod }N_{\ast,\alpha}(\Tope(\A)).
\end{aligned}
\]
This is exactly the tensor-product differential with the first tensor factor
in degree $j$.
\end{proof}

\begin{proposition}\label{prop:special_homology}
The homology groups of $Sp_{\ast,\alpha}(\Tope(\A))_{\bullet\to B}$ are given by
\[
        H_kSp_{\ast,\alpha}(\Tope(\A))_{\bullet\to B}
        \cong
        \MH_{k-r,\alpha-\one}(\Tope(\A))_{\bullet\to B},
\]
where the right-hand side is interpreted as zero for $k<r$.
\end{proposition}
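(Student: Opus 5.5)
The plan is to sum the tensor decomposition of \Cref{prop:tensor_decomp} over all starting chambers and then use a Künneth argument. The first factor is a two-vertex magnitude complex with a single-sphere homology, so only the second factor survives, shifted by $r$.

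First, note that $Sp_{\ast,\alpha}(\Tope(\A))_{\bullet\to B}=\bigoplus_{A\in\Ch(\A)}Sp_{\ast,\alpha}(\Tope(\A))_{A\to B}$. This is a finite sum, since $\A$ is central and hence finite. By \Cref{prop:tensor_decomp} it becomes
\[
\bigoplus_{A\in\Ch(\A)}\MC_{\ast,n}(\Tope(\A))_{A\to -A}\otimes\MC_{\ast,\alpha-\one}(\Tope(\A))_{-A\to B}.
\]

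Next, compute the first tensor factor. Since $d(A,-A)=n$, \Cref{intervalMH} gives $\MC_{\ast,n}(\Tope(\A))_{A\to -A}\cong\widetilde C_{\ast-2}(\Delta(A,-A)_{\Tope(\A)})$. The interval $[A,-A]$ is all of $\Ch(\A)=\Ch(\A)_{F_\A}$, so it is facial with face the center $F_\A$, and $\codim F_\A=r$.

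\begin{itemize}
\item If $r\geq 2$, \Cref{thm:interval_homotopy} says the open interval is homotopy equivalent to $S^{r-2}$. The homology of the first factor is then $\ZZ$ concentrated in degree $r$.
\item If $r=1$, there are exactly two chambers $A$ and $-A$, and the open interval is empty. The reduced homology is then $\widetilde H_{-1}(\varnothing)=\ZZ$, which again sits in degree $1=r$.
\end{itemize}

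The case $r=0$ does not occur, since $n>0$ forces $r\geq 1$. In every case the first factor is a bounded complex of finitely generated free abelian groups whose homology is $\ZZ[r]$, a free module in degree $r$.

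Then apply Künneth. Both factors are complexes of free abelian groups and the homology of the first factor is free, so the Tor terms vanish. Hence
\[
H_k\bigl(\MC_{\ast,n}(\Tope(\A))_{A\to-A}\otimes\MC_{\ast,\alpha-\one}(\Tope(\A))_{-A\to B}\bigr)\cong\MH_{k-r,\alpha-\one}(\Tope(\A))_{-A\to B}.
\]
For $k<r$ the right-hand side is zero, because the second factor lives in nonnegative degrees. Alternatively, since a bounded free complex with free homology is chain homotopy equivalent to its homology, one can replace the first factor by $\ZZ[r]$ directly.

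Finally, sum over $A$. Since $A\mapsto -A$ is a bijection of $\Ch(\A)$, this gives $\bigoplus_{A'}\MH_{k-r,\alpha-\one}(\Tope(\A))_{A'\to B}=\MH_{k-r,\alpha-\one}(\Tope(\A))_{\bullet\to B}$, by the decomposition of $\MH_{\ast,\alpha}$ over pairs of endpoints.

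The main point requiring care is the degree bookkeeping. One must check that the shift in \Cref{intervalMH} together with the sphere dimension $r-2$ gives exactly degree $r$, including the degenerate case $r=1$ where the interval is empty. One must also check that the tensor differential in \Cref{prop:tensor_decomp} uses the standard Koszul sign, so that Künneth applies verbatim. The sign $(-1)^j$ appearing there is exactly the Koszul sign, so no correction is needed.
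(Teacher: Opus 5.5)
Your proposal is correct and follows essentially the same route as the paper. The paper likewise combines \Cref{intervalMH} with the facial interval $[A,-A]_{\Tope(\A)}=\Ch(\A)_{F_\A}$ and Edelman--Walker (treating $r=1$ through $\widetilde H_{-1}(\varnothing)=\ZZ$) to get $\ZZ[r]$, then applies K\"unneth to \Cref{prop:tensor_decomp} for each $A$ and sums over $A$.
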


\begin{proof}
For each chamber $A\in \Ch(\A)$, \Cref{intervalMH} identifies
$\MC_{\ast,n}(\Tope(\A))_{A\to -A}$ with a two-fold shift of the chain complex of the open
interval $(A,-A)_{\Tope(\A)}$ in the tope graph.
Since $A$ and $-A$ differ on every hyperplane, the closed interval $[A,-A]_{\Tope(\A)}$ is the facial
interval
\[
        [A,-A]_{\Tope(\A)}=\Ch(\A)_{F_\A}.
\]
By the Edelman--Walker theorem (\Cref{thm:interval_homotopy}), the open
interval $(A,-A)_{\Tope(\A)}$ has the homology of a sphere of dimension $r-2$.
When $r=1$, this means that the open interval is empty and is interpreted as
$S^{-1}$, with reduced homology $\widetilde H_{-1}(\varnothing)=\ZZ$.
Consequently
\[
    \MH_{\ast,n}(\Tope(\A))_{A\to -A}\cong \ZZ[r],
\]
where $\ZZ[r]$ denotes a graded abelian group with $\ZZ$ concentrated in degree
$r$.  Since all chain groups are free abelian, the K\"unneth theorem applied to
\Cref{prop:tensor_decomp} gives
\[
        H_kSp_{\ast,\alpha}(\Tope(\A))_{A\to B}
        \cong \MH_{k-r,\alpha-\one}(\Tope(\A))_{-A\to B}.
\]
The claim now follows by taking the direct sum over $A\in \Ch(\A)$.
\end{proof}

Therefore \Cref{thm:periodicity} follows once we prove that
$N_{\ast,\alpha}(\Tope(\A))$ is acyclic for every $\alpha\geq\one$.

\subsection{Reduction to a poset statement}\label{sec:poset}
We now translate non-special chains into relative chains in a subposet of $\NN^{\oplus n}$.
Fix a base chamber $A\in\Ch(\A)$.  For each hyperplane $H_i$, choose the side
containing $A$ as the positive side.  This identifies the chambers with a
subset
\[
        \Cset_A\subseteq\{0,1\}^n,
\]
where $A$ corresponds to $\zero\in \Cset_A$.  Since $\A$ is central, the
opposite chamber $-A$ corresponds to $\one\in \Cset_A$.

\begin{definition}[The posets $\Tpos_A$ and $\Npos_A$]
Define two subposets of $\NN^{\oplus n}$ by
\[
        \Tpos_A
        :=\{\beta\in\NN^{\oplus n}\mid \ol\beta\in\Cset_A\}\setminus\{\one\},
        \quad
        \Npos_A
        :=\{\beta\in\NN^{\oplus n}\mid \ol\beta\notin\Cset_A\}\cup\{\one\}.
\]
Thus $\NN^{\oplus n}=\Tpos_A\sqcup\Npos_A$.
When the base chamber $A$ is clear, we write simply $\Tpos$ and $\Npos$.
\end{definition}

For $\alpha\in\NN^{\oplus n}$, let
\[
        \Delta_\alpha,
        \quad
        \Delta_\alpha^\Tpos,
        \quad
        \Delta_\alpha^\Npos
\]
be the order complexes of $\NN^{\oplus n}_{\leq\alpha}$,
$\Tpos_{A,\leq\alpha}$, and $\Npos_{A,\leq\alpha}$, respectively.
The order complex $\Delta_\alpha$ is the standard staircase triangulation of
$\Delta^{\alpha_1}\times\cdots\times\Delta^{\alpha_n}$,
and is therefore a PL ball of dimension $\lvert\alpha\rvert$.

\begin{lemma}\label{lem:relative_generators}
A simplex
\[
        \beta^0<\beta^1<\cdots<\beta^k
\]
of $\Delta_\alpha$ is not contained in $\partial\Delta_\alpha$ if and only if
for every coordinate $i$, the set
\[
        \{\beta_i^0,\beta_i^1,\ldots,\beta_i^k\}
\]
is the full set $\{0,1,\ldots,\alpha_i\}$.
Equivalently, the chain starts at $\zero$, ends at $\alpha$, and each
successive difference $\beta^j-\beta^{j-1}$ belongs to $\{0,1\}^n\setminus\{\zero\}$.
\end{lemma}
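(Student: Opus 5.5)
The plan is to use the product structure of $\Delta_\alpha$. The poset $\NN^{\oplus n}_{\leq\alpha}$ is the product of the chains $[0,\alpha_1]\times\cdots\times[0,\alpha_n]$. Its order complex is the staircase triangulation of the product of simplices $\Delta^{\alpha_1}\times\cdots\times\Delta^{\alpha_n}$. We first identify the boundary $\partial\Delta_\alpha$ explicitly, and then test when a given simplex lies in it.

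\textbf{Identifying the boundary.} Under the staircase triangulation, a simplex $\beta^0<\cdots<\beta^k$ has projection to the $i$-th factor $\Delta^{\alpha_i}$ equal to the face spanned by the vertex set $\{\beta^0_i,\ldots,\beta^k_i\}\subseteq\{0,\ldots,\alpha_i\}$. The geometric realization of the simplex therefore lies in the product of these faces. The boundary of the product ball is
\[
        \partial\bigl(\Delta^{\alpha_1}\times\cdots\times\Delta^{\alpha_n}\bigr)
        =\bigcup_i \Delta^{\alpha_1}\times\cdots\times\partial\Delta^{\alpha_i}\times\cdots\times\Delta^{\alpha_n}.
\]
Since the triangulation refines the product cell structure, $\partial\Delta_\alpha$ is the subcomplex of simplices lying in this union.

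\textbf{The criterion.} A simplex lies in this union exactly when, for some $i$, its $i$-th projection lies in a proper face of $\Delta^{\alpha_i}$, that is, $\{\beta^j_i\}_j\neq\{0,\ldots,\alpha_i\}$.

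The one point needing care is the direction ``full coordinate sets $\Rightarrow$ not in the boundary.'' Take the barycenter of the simplex; in the product its $i$-th coordinate is an average of the vertices $\beta^j_i$. If every value $0,\ldots,\alpha_i$ occurs, this average has positive weight on every vertex of $\Delta^{\alpha_i}$, so it is interior. The interior of a product is the product of interiors, so the barycenter is an interior point of the ball and the simplex is not in the boundary. Conversely, if some coordinate set misses a value, the entire simplex lies in a proper face of the $i$-th factor, hence in the boundary. This argument assumes the standard description of the staircase triangulation, which says that a chain of lattice points realizes as the convex hull of the corresponding product points. I expect verifying this cleanly to be the main obstacle, and I would cite it as standard.

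\textbf{The equivalent form.} Suppose every coordinate set $\{\beta^0_i,\ldots,\beta^k_i\}$ is full. Because the chain is weakly increasing in each coordinate, the coordinate must start at $0$ and end at $\alpha_i$. Since it takes every intermediate value, no coordinate can jump by $2$ or more, so each step increases every coordinate by $0$ or $1$. Strictness of the chain makes each difference $\beta^j-\beta^{j-1}$ nonzero. Conversely, a chain from $\zero$ to $\alpha$ whose differences lie in $\{0,1\}^n\setminus\{\zero\}$ hits every value in each coordinate.
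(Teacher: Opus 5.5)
Your proposal is correct and follows essentially the same route as the paper: identify $\Delta_\alpha$ with the staircase triangulation of $\Delta^{\alpha_1}\times\cdots\times\Delta^{\alpha_n}$, note that a simplex lies in the boundary exactly when some coordinate projection misses a value (that is, lands in a proper face of $\Delta^{\alpha_i}$), and derive the equivalent form from monotonicity of the chain. Your barycenter argument just spells out the ``full coordinate sets implies not in the boundary'' direction, which the paper asserts in one line.
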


\begin{proof}
Under the identification of $\Delta_\alpha$ with the staircase triangulation of
$\Delta^{\alpha_1}\times\cdots\times\Delta^{\alpha_n}$, the $i$-th projection
records the vertices of the simplex $\Delta^{\alpha_i}$ that occur in the
$i$-th coordinate.  A simplex is in the boundary of the product exactly when,
for some $i$, this projection lies in the boundary of $\Delta^{\alpha_i}$;
that is, when at least one value in $\{0,1,\ldots,\alpha_i\}$ is missing.
This proves the first assertion.  The equivalent formulation follows from the
monotonicity of the chain.
\end{proof}

\begin{lemma}\label{lem:nonspecial_to_poset}
For every $\alpha\in\NN^{\oplus n}$ with $\alpha\geq\one$, there is an isomorphism of chain complexes
\[
        N_{\ast,\alpha}(\Tope(\A))_{A\to \bullet}
        \cong
        C_\ast(\Delta_\alpha^\Tpos,\Delta_\alpha^\Tpos\cap\partial\Delta_\alpha).
\]
\end{lemma}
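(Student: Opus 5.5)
The isomorphism will come from sending a chain to the sequence of its cumulative length vectors. Given a non-special chain $\vec C=(A=C_0,C_1,\ldots,C_k)$ with $\vec\ell(\vec C)=\alpha$, set $\lambda_i=\vec\ell(C_0,\ldots,C_i)$, so that
\[
        \zero=\lambda_0<\lambda_1<\cdots<\lambda_k=\alpha .
\]
The key observation is that, in the coordinates of $\Cset_A$, the chamber $C_i$ is exactly $\ol{\lambda_i}$. Indeed, each step $C_{j-1}\to C_j$ flips precisely the coordinates in $\supp\vec d(C_{j-1},C_j)$, and $\vec d(C_{j-1},C_j)\in\{0,1\}^n$. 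Hence the chain is recovered from $(\lambda_0,\ldots,\lambda_k)$, and each $\lambda_i$ satisfies $\ol{\lambda_i}\in\Cset_A$. Non-speciality says $\lambda_i\neq\one$ for all $i$, so every $\lambda_i$ lies in $\Tpos_{A,\le\alpha}$. Since the successive differences lie in $\{0,1\}^n\setminus\{\zero\}$ and the chain runs from $\zero$ to $\alpha$, \Cref{lem:relative_generators} shows that this simplex of $\Delta^\Tpos_\alpha$ is not in $\partial\Delta_\alpha$.

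Conversely, let $\beta^0<\cdots<\beta^k$ be a simplex of $\Delta^\Tpos_\alpha$ not contained in $\partial\Delta_\alpha$. By \Cref{lem:relative_generators}, $\beta^0=\zero$, $\beta^k=\alpha$, and the differences are nonzero $0/1$ vectors. Put $C_i:=\ol{\beta^i}\in\Cset_A$. Then consecutive $C_i$ differ exactly in the support of $\beta^i-\beta^{i-1}$, so $\vec d(C_{i-1},C_i)=\beta^i-\beta^{i-1}$; this is nonzero, so the chain is proper. Its length vector is $\alpha$, it starts at $A$, and it is non-special because $\one\notin\Tpos_A$. These two maps are mutually inverse, so we obtain a bijection between the generators of $N_{k,\alpha}(\Tope(\A))_{A\to\bullet}$ and the $k$-simplices of $\Delta_\alpha^\Tpos$ not in $\partial\Delta_\alpha$. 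The latter form a basis of the relative chain group $C_k(\Delta^\Tpos_\alpha,\Delta^\Tpos_\alpha\cap\partial\Delta_\alpha)$. The map is degree-preserving, since a $k$-chain has $k+1$ vertices.

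It remains to compare differentials, and this is the main step to check carefully. Deleting the vertex $\beta^0$ or $\beta^k$ of an interior simplex produces a face that misses $0$ or $\alpha_i$ in some coordinate, so that face lies in $\partial\Delta_\alpha$ and vanishes in the relative complex. This matches the magnitude differential, which omits $i=0$ and $i=k$. For $0<i<k$, deleting $\beta^i$ gives a face that is interior exactly when $\beta^{i+1}-\beta^{i-1}\in\{0,1\}^n$, i.e.\ when the supports of $\vec d(C_{i-1},C_i)$ and $\vec d(C_i,C_{i+1})$ are disjoint. That is precisely the condition $d(C_{i-1},C_{i+1})=d(C_{i-1},C_i)+d(C_i,C_{i+1})$ for $\partial_i$ to be nonzero, as in \Cref{lem:nonspecial_subcomplex}. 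When the condition holds, the face is the image of $\partial_i\vec C$, which is again non-special. The simplicial face sign is $(-1)^i$, the same as in $\partial$, so with the identification $\vec C\mapsto[\lambda_0<\cdots<\lambda_k]$ and no extra sign twist the two differentials agree on the nose. This gives the claimed isomorphism of chain complexes.
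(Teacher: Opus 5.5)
Your proposal is correct and takes essentially the same approach as the paper. You use the map $\vec C\mapsto[\lambda_0<\cdots<\lambda_k]$, with the parity-vector inverse and \Cref{lem:relative_generators} identifying the relative generators. You then observe that endpoint deletions and non-smooth interior deletions fall into $\partial\Delta_\alpha$, while smooth interior deletions match $\partial_i$ with the same sign $(-1)^i$.
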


\begin{proof}
Let $\vec C=(A=C_0,\ldots,C_k)\in P_{k,\alpha}(\Tope(\A))_{A\to \bullet}$ be a non-special chain.
Define
\[
        \lambda_j:=\vec{\ell}(C_0,\ldots,C_j),
        \quad 0\leq j\leq k.
\]
Then $\zero=\lambda_0<\lambda_1<\cdots<\lambda_k=\alpha$,
and each difference $\lambda_j-\lambda_{j-1}$ is a nonzero vector in $\{0,1\}^n$.
The parity vector $\ol{\lambda_j}$ is the sign vector of the chamber $C_j$ relative to $A$.
Since the chain is non-special, no $\lambda_j$ is equal to $\one$.  Hence all vertices $\lambda_j$ lie in $\Tpos_A$.

Moreover, in each coordinate $i$, the value of $\lambda_{j,i}$ increases by
one exactly when the step $C_{j-1}\to C_j$ crosses $H_i$.  Since the total
number of crossings of $H_i$ is $\alpha_i$, the chain
$\lambda_0<\cdots<\lambda_k$ uses every value $0,1,\ldots,\alpha_i$ in the
$i$-th coordinate.  By \Cref{lem:relative_generators}, it represents a
relative simplex of
$(\Delta_\alpha^\Tpos,\Delta_\alpha^\Tpos\cap\partial\Delta_\alpha)$.
We send $\vec C$ to $[\lambda_0<\cdots<\lambda_k]$.

Conversely, let
$\lambda_0<\cdots<\lambda_k$ be a simplex of $\Delta_\alpha^\Tpos$ which is
not contained in $\partial\Delta_\alpha$.  By
\Cref{lem:relative_generators}, $\lambda_0=\zero$,
$\lambda_k=\alpha$, and every difference
$\lambda_j-\lambda_{j-1}$ lies in $\{0,1\}^n\setminus\{\zero\}$.  Since
$\ol{\lambda_j}\in\Cset_A$, there is a unique chamber $C_j$ with this sign
vector relative to $A$.  The step from $C_{j-1}$ to $C_j$ crosses precisely
the hyperplanes in the support of $\lambda_j-\lambda_{j-1}$.  Thus
$(C_0,\ldots,C_k)$ is a proper magnitude chain with length vector
$\alpha$.  Since no vertex $\lambda_j$ is $\one$, the chain is non-special.

It remains to check the differentials.  The simplicial boundary also has the two
endpoint terms obtained by deleting $\lambda_0=\zero$ or $\lambda_k=\alpha$.
Deleting $\lambda_0$ misses the value $0$ in every coordinate crossed in the
first step, and deleting $\lambda_k$ misses the value $\alpha_i$ in every
coordinate crossed in the last step.  Hence both endpoint faces lie in
$\partial\Delta_\alpha$ by \Cref{lem:relative_generators}, and they vanish
in the relative chain complex.

For an internal vertex, deleting $C_j$ corresponds to deleting $\lambda_j$.
If $C_j$ is smooth, the adjacent crossing supports are disjoint, so deleting
$\lambda_j$ again gives a relative simplex corresponding to the magnitude
boundary term.  If $C_j$ is not smooth, then the metric equality required in the
magnitude differential fails.  Equivalently, some coordinate jumps by at least
two after deleting $\lambda_j$; by \Cref{lem:relative_generators}, the
resulting simplex lies in $\partial\Delta_\alpha$ and is zero in the relative
chain complex.
Hence the two differentials agree.
\end{proof}

\begin{lemma}\label{lem:T_N_alexander}
For every $\alpha\in\NN^{\oplus n}$ and every $k$, there is an isomorphism
\[
        H_k(\Delta_\alpha^\Tpos,
             \Delta_\alpha^\Tpos\cap\partial\Delta_\alpha)
        \cong
        \widetilde H^{\,|\alpha|-k-1}(\Delta_\alpha^\Npos).
\]
\end{lemma}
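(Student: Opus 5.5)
This is a direct application of \Cref{lem:alexander_ball}. Take $\Delta=\Delta_\alpha$, the order complex of $\NN^{\oplus n}_{\leq\alpha}$. As noted above, it is the staircase triangulation of $\Delta^{\alpha_1}\times\cdots\times\Delta^{\alpha_n}$, and hence a PL ball of dimension $d=|\alpha|$.

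Its vertex set is $\NN^{\oplus n}_{\leq\alpha}$. Partition it using the decomposition $\NN^{\oplus n}=\Tpos_A\sqcup\Npos_A$:
\[
X:=\Tpos_{A,\leq\alpha},\qquad Y:=\Npos_{A,\leq\alpha}.
\]

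The one point to check is that the induced subcomplexes $\Delta_X$ and $\Delta_Y$ are the order complexes $\Delta^\Tpos_\alpha$ and $\Delta^\Npos_\alpha$. This holds because the order complex of a subposet is the full subcomplex of the ambient order complex on that vertex set. A set of vertices of $X$ spans a simplex of $\Delta_\alpha$ if and only if it is a chain in $\NN^{\oplus n}_{\leq\alpha}$, which is the same as being a chain in the induced subposet $\Tpos_{A,\leq\alpha}$. The same argument applies to $Y$.

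\Cref{lem:alexander_ball} then gives
\[
H_k(\Delta^\Tpos_\alpha,\Delta^\Tpos_\alpha\cap\partial\Delta_\alpha)\cong\widetilde H^{\,|\alpha|-k-1}(\Delta^\Npos_\alpha)
\]
for every $k$, which is the claim.

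The degenerate cases need no separate treatment. The lemma is stated for arbitrary partitions, and the proof of \Cref{lem:alexander_ball} handles $Y=\varnothing$ explicitly. For example, when $\alpha=\zero$ we have $\Delta_\alpha$ a point, $\zero\in\Cset_A$, and $Y=\varnothing$. The empty-complex convention $\widetilde H^{-1}(\varnothing)=\ZZ$ is built into that lemma.

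There is no real obstacle here. The only step requiring any care is confirming the PL-ball structure of $\Delta_\alpha$, which the paper has already recorded, and the full-subcomplex identification above.
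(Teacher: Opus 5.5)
Your proposal is correct and matches the paper's proof, which likewise applies \Cref{lem:alexander_ball} to the PL ball $\Delta_\alpha$ with the vertex partition $\NN^{\oplus n}_{\leq\alpha}=\Tpos_{A,\leq\alpha}\sqcup\Npos_{A,\leq\alpha}$. Your additional check, that the induced subcomplexes of an order complex are the order complexes of the induced subposets, is a detail the paper leaves implicit.
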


\begin{proof}
Apply \Cref{lem:alexander_ball} to the PL ball
$\Delta_\alpha$ and to the vertex partition
$\NN^{\oplus n}_{\leq\alpha}
=\Tpos_{A,\leq\alpha}
\sqcup
\Npos_{A,\leq\alpha}$.
\end{proof}

Thus the acyclicity of $N_{\ast,\alpha}(\Tope(\A))$ is reduced to the following purely poset-theoretic statement.

\begin{proposition}\label{prop:target}
For every $A\in \Ch(\A)$ and every $\alpha\in\NN^{\oplus n}$ with $\alpha\geq\one$,
the poset $\Npos_{A,\leq\alpha}$ is acyclic.
\end{proposition}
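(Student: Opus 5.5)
Writing $\Npos=\Npos_A$, we must show that $\Npos_{\leq\alpha}$ is acyclic for every $\alpha\geq\one$. The natural approach is to exhibit $\Npos_{\leq\alpha}$ as a union of acyclic lower ideals, using \Cref{lem:gluing}, and to handle the non-ideal parts with \Cref{lem:homological-quillen}.

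\emph{Step 1 (cone point).} Consider the subposet $\Npos_{\geq\one,\leq\alpha}$ of elements of $\Npos_{\leq\alpha}$ lying above $\one$. It has minimum $\one$, so it is a cone and hence acyclic. More generally, any lower ideal of $\Npos_{\leq\alpha}$ with a maximum element is acyclic. The elements below $\one$ in $\Npos$ are $\one$ itself together with the vectors $\beta\in\{0,1\}^n$ with $\beta\notin\Cset_A$. We call these \emph{non-topes}.

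\emph{Step 2 (inclusion $\Npos_{\geq\one,\leq\alpha}\hookrightarrow\Npos_{\leq\alpha}$).} We apply \Cref{lem:homological-quillen} in its dual form, with upper fibers $\{p\geq\one,\ p\geq x\}$ in place of lower fibers; it applies to the opposite posets, and order complexes are unchanged. For $x\in\Npos_{\leq\alpha}$ with $x\not\geq\one$, the upper fiber is
\[
\{\gamma\in\Npos_{\leq\alpha}:\gamma\geq x\vee\one\}.
\]
This fiber has the minimum $x\vee\one$ precisely when $x\vee\one\in\Npos$. That fails only when $\overline{x\vee\one}\in\Cset_A$. Note that $x\vee\one$ agrees with $x$ wherever $x_i\geq1$ and equals $1$ where $x_i=0$, so its parity is determined by $x$.

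We therefore cannot use this fiber directly in general. Instead we induct on $|\alpha|$ and on the set of coordinates where $x$ vanishes. Let $Z=\{i:x_i=0\}$, a nonempty set. The fiber is isomorphic to a poset of the form $\Npos'_{\leq\alpha'}$ for a related arrangement. Concretely, $\gamma\mapsto\gamma-x$ on the coordinates outside $Z$ roughly corresponds to changing base chamber and to localizing or contracting the arrangement. We want this to be an instance of the proposition for a smaller $\alpha$, or for a restriction $\A^X$ or localization $\A_F$ with base chamber $A\circ$(something). The parity of $x$ determines a sign vector, and when that sign vector is a covector of the oriented matroid we are in the facial situation.

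\emph{Step 3 (the main obstacle).} The hard part is the fibers where $\overline{x\vee\one}$ is a tope. Here we would use \Cref{lem:gluing}. We write the relevant set as a union of lower ideals $\Npos_{\leq\beta}$ over maximal non-tope-type elements $\beta$. These ideals are closed under intersection via coordinatewise meets, and each is acyclic by the induction hypothesis applied to $\beta\geq\one$ with $\beta<\alpha$. When $\beta\not\geq\one$, the ideal instead reduces to a statement about the non-topes $\{0,1\}^n\setminus\Cset_A$. That is, we need the acyclicity of the poset of non-tope sign patterns lying below a fixed non-tope, relative to the base chamber $A$.

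For this base case I would invoke the Edelman--Walker theorem (\Cref{thm:interval_homotopy}) through Alexander duality. By \Cref{lem:alexander_ball} applied to the cube $\Delta_{\one}$, the complementary tope part corresponds to intervals $[A,C]$ in $\Tope(\A)$. Contractibility of the non-facial intervals, together with the sphere homology of the facial intervals, should give exactly the required acyclicity. Matching the facial intervals with the cases excluded by the special vertex $\one$ is the delicate bookkeeping step I expect to be the crux.
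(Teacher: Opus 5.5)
There is a genuine gap, and it is in Step 2. The upper fibers of $\Npos_{\geq\one,\leq\alpha}\hookrightarrow\Npos_{\leq\alpha}$ are in general \emph{not} acyclic, so Step 3 cannot supply what Step 2 needs.

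Take $\A=U_{2,3}$. For a suitable labeling, the topes relative to $A$ are $000,100,110,111,011,001$, and the non-topes are $010$ and $101$.
\begin{itemize}
\item For $\alpha=(2,1,1)$ and $x=(2,1,0)\in\Npos_{\leq\alpha}$, you get $x\vee\one=\alpha$, whose parity $011$ is a tope. So the fiber $\{\gamma\in\Npos_{\leq\alpha}\mid\gamma\geq x\vee\one\}$ is empty. Meanwhile $\Npos_{\leq\alpha}$ is the path on $(2,1,0),(0,1,0),(1,1,1),(1,0,1)$, in this order, so it is acyclic, but not via your fibers.
\item For $\alpha=(3,2,2)$ and the same $x$, the fiber is the two-point antichain $\{(2,1,2),(3,2,1)\}$.
\end{itemize}
In general, translating by $y=x\vee\one$ identifies the fiber with the vectors $\delta\leq\alpha-y$ whose parity is a non-tope relative to $C_{\overline y}$, \emph{without} the adjoined element $\one$. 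This is not of the form $\Npos'_{\leq\alpha'}$, contrary to your claim, and it typically has homology.

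Step 3 is also not coherent as written. A fiber $\Npos\cap[x\vee\one,\alpha]$ is not a union of lower ideals $\Npos_{\leq\beta}$. The ``base case'' you isolate, non-topes below a fixed non-tope, is trivial, since that poset has a maximum.

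The missing idea is the correct induction statement. Induction over $\alpha\geq\one$ does not close, because the posets that arise are $\Npos_{\leq\eta}$ with $\eta\not\geq\one$. These need not be acyclic: for example $\Npos_{\leq\zero}=\varnothing$, and by Alexander duality $\Npos_{\leq a}$ is not acyclic for a tope $a<\one$ with $[A,C_a]$ facial.

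The paper proves the stronger \Cref{prop:strong_acyclicity}: if $\gamma\wedge q\in\Npos$ for some tope $q$, then $\Npos_{\leq\gamma}$ is acyclic. The proposition is the case $q=\one$. The proof runs as follows.
\begin{itemize}
\item It uses \emph{lower} fibers. Let $W=\bigcup_X\Npos_{\leq\gamma-\one_X}$ over nonempty $X\subseteq\{i\mid\gamma_i\geq 2\}$; $W$ is acyclic by the gluing lemma.
\item For $\beta\in\Npos_{\leq\gamma}\setminus W$, the fiber $W_{\leq\beta}$ is the union of the $\Npos_{\leq\beta-\one_X}$. Each of these is acyclic by induction applied with the \emph{new} tope $\overline\gamma$, since $(\beta-\one_X)\wedge\overline\gamma=\overline\beta\in\Npos$. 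This change of $q$ is exactly why the strengthened hypothesis is essential.
\item The base case $\gamma\leq\one$ is a tope $\gamma\neq\one$ with $\gamma\wedge q$ a non-tope. The Tits-product argument of \Cref{lem:nonfacial_criterion} shows $[A,C_\gamma]$ is non-facial. Edelman--Walker contractibility together with Alexander duality (\Cref{lem:nonfacial_acyclic}) then gives acyclicity.
\end{itemize}
You gesture at these last two tools. Without the non-faciality criterion and the strengthened hypothesis, however, there is nothing for them to act on.
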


\subsection{The acyclicity theorem for $\Npos$}
It remains to show that the poset $\Npos_{A,\leq\alpha}$ has no reduced homology.
In this subsection the base chamber $A$ is fixed, and we suppress it from the
notation.

\begin{definition}
Let $a\in\Cset_A\subseteq\{0,1\}^n$ be a chamber sign vector and $C_a$ be the corresponding chamber.
We call $\Tpos_{\leq a}$ facial if the corresponding interval
$[A,C_a]_{\Tope(\A)}$ in $\Tope(\A)$ is facial.
\end{definition}

\begin{lemma}\label{lem:nonfacial_acyclic}
Let $a\in\Tpos_{\leq\one}=\Cset_A\setminus\{\one\}$.  If the interval
$\Tpos_{\leq a}$ is not facial, then $\Npos_{\leq a}$ is acyclic.
\end{lemma}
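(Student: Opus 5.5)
The plan is to run the Alexander duality of \Cref{lem:T_N_alexander} in reverse, with $\alpha=a$, and so reduce the claim to the Edelman--Walker theorem for the interval $[A,C_a]_{\Tope(\A)}$.

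\emph{Identifying the posets.} Since $a\in\{0,1\}^n$ and $a\neq\one$, every $\beta\leq a$ lies in $\{0,1\}^n$, so $\ol\beta=\beta$, and $\one\not\leq a$. Hence
\[
\Tpos_{\leq a}=\{\beta\in\Cset_A\mid \beta\leq a\},
\qquad
\Npos_{\leq a}=\{\beta\in\{0,1\}^n\mid \beta\leq a,\ \beta\notin\Cset_A\}.
\]
For a chamber $C$ with sign vector $\beta$ relative to $A$, the condition $\beta\leq a$ says exactly that $S(A,C)\subseteq S(A,C_a)$. This is equivalent to $d(A,C)+d(C,C_a)=d(A,C_a)$. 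Likewise, the coordinatewise order on such $\beta$ agrees with $\preceq_{A,C_a}$. So $\Tpos_{\leq a}$ is the closed interval $[A,C_a]_{\Tope(\A)}$ as a poset, with $\zero\leftrightarrow A$ and $a\leftrightarrow C_a$.

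\emph{Applying duality.} \Cref{lem:T_N_alexander} with $\alpha=a$ gives
\[
H_k(\Delta_a^\Tpos,\Delta_a^\Tpos\cap\partial\Delta_a)\cong\widetilde H^{\,|a|-k-1}(\Delta_a^\Npos)
\]
for every $k$. By \Cref{lem:relative_generators}, and because all coordinates of $a$ are at most $1$, a chain in $\Delta_a$ avoids $\partial\Delta_a$ exactly when it contains both $\zero$ and $a$. Indeed, a monotone chain meets the values $0$ and $1$ in every coordinate of $\supp(a)$ if and only if its minimum is $\zero$ and its maximum is $a$.

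\emph{Identifying the relative complex.} The relative chain group is therefore free on chains $\zero<\beta^1<\cdots<\beta^{k-1}<a$ in $\Tpos_{\leq a}$. Sending such a chain to $\pm[\beta^1<\cdots<\beta^{k-1}]$ identifies the relative complex, up to sign conventions, with $\widetilde C_{\ast-2}$ of the open interval $(A,C_a)_{\Tope(\A)}$. The endpoint-deletion faces vanish because they lie in $\partial\Delta_a$. This is the same identification as in \Cref{intervalMH}, and one could alternatively route it through \Cref{lem:nonspecial_to_poset} restricted to the case $\alpha=a\le\one$.

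\emph{Conclusion.} Since $\Tpos_{\leq a}$ is not facial, the interval $[A,C_a]_{\Tope(\A)}$ is not facial, so \Cref{thm:interval_homotopy} says the open interval $(A,C_a)_{\Tope(\A)}$ is contractible. Here $A\neq C_a$ because a facial interval would otherwise be trivial; in fact $a=\zero$ gives the facial interval $\Ch(\A)_A$. Consequently the relative homology vanishes in all degrees, and hence $\widetilde H^{j}(\Delta_a^\Npos)=0$ for all $j$. Since $\Delta_a^\Npos$ is a finite complex, the universal coefficient theorem converts vanishing of all reduced cohomology into vanishing of all reduced homology, so $\Npos_{\leq a}$ is acyclic.

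The only delicate points are the bookkeeping in the identification of the relative complex (signs and the endpoint faces) and the degree $-1$ convention. The latter takes care of itself: vanishing of $\widetilde H^{-1}$ forces $\Npos_{\leq a}\neq\varnothing$, consistent with the convention that the empty poset is not acyclic.
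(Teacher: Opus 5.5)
Your proof is correct and matches the paper's argument: you identify the relative complex $(\Delta_a^\Tpos,\Delta_a^\Tpos\cap\partial\Delta_a)$ with the two-fold shift of the augmented chain complex of the open interval $(A,C_a)_{\Tope(\A)}$, use the Edelman--Walker theorem to get vanishing, and then pass to $\Npos_{\leq a}$ via \Cref{lem:T_N_alexander} and the universal coefficient theorem. Your explicit exclusion of $a=\zero$ (since $\{A\}=\Ch(\A)_A$ is facial) is a small point the paper leaves implicit. Your aside about routing through \Cref{lem:nonspecial_to_poset} would use that lemma outside its stated hypothesis $\alpha\geq\one$, but the argument does not depend on it.
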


\begin{proof}
Every relative simplex of
$(\Delta_a^\Tpos,\Delta_a^\Tpos\cap\partial\Delta_a)$ contains both endpoints
$\zero$ and $a$.  For a relative $k$-simplex
\[
[\zero<\lambda_1<\cdots<\lambda_{k-1}<a],
\]
deleting the two endpoints and multiplying by $(-1)^k$ gives a chain isomorphism
with the two-fold degree shift of the augmented simplicial chain complex of the open
interval $\Tpos_{\leq a}\setminus\{\zero,a\}$.  The factor $(-1)^k$ accounts for
our standard simplicial boundary convention.  Thus
\[
        H_\ast(\Delta_a^\Tpos,
             \Delta_a^\Tpos\cap\partial\Delta_a)
        \cong
        \widetilde H_{\ast-2}(\Tpos_{\leq a}\setminus\{\zero,a\}).
\]
By the Edelman--Walker theorem for central hyperplane arrangements (\Cref{thm:interval_homotopy}), the open interval is contractible when
$\Tpos_{\leq a}$ is not facial.  Hence the relative homology group above
vanishes in every degree.  \Cref{lem:T_N_alexander} gives $\widetilde H^\ast(\Delta_a^\Npos)=0$.
By the universal coefficient theorem, all reduced homology groups of
$\Delta_a^\Npos$ also vanish.  Therefore $\Npos_{\leq a}$ is acyclic.
\end{proof}

\begin{lemma}\label{lem:nonfacial_criterion}
Let $a,b\in\Cset_A$ with $a<\one$.  If the coordinatewise meet
$a\wedge b$ is not a chamber sign vector, then $\Npos_{\leq a}$ is acyclic.
\end{lemma}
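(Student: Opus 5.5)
The plan is to reduce to \Cref{lem:nonfacial_acyclic}: we will show that if $a\wedge b\notin\Cset_A$, then the interval $\Tpos_{\leq a}$, i.e.\ $[A,C_a]_{\Tope(\A)}$, is not facial. Note that $a\neq\zero$, since otherwise $a\wedge b=\zero\in\Cset_A$. Also $a<\one$ gives $a\in\Cset_A\setminus\{\one\}=\Tpos_{\leq\one}$, so the lemma applies once non-faciality is established.

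The first step is to record what faciality means in sign-vector terms. Suppose $[A,C_a]_{\Tope(\A)}=\Ch(\A)_F$ for a face $F$. Then, as noted after the definition of facial intervals, $F$ has sign vector with $F_{H_i}=0$ for $i\in\supp(a)$ and $F_{H_i}=+$ (the common sign of $A$ and $C_a$) for $i\notin\supp(a)$. Every chamber of $\Ch(\A)_F$ lies in the interval, so its sign vector relative to $A$ is some $c\in\Cset_A$ with $c\leq a$. Conversely, every $c\in\{0,1\}^n$ with $c\leq a$ and $c\in\Cset_A$ lies in the interval, since $d(A,C_c)+d(C_c,C_a)=|c|+|a-c|=|a|$.

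The key step is to use the Tits product. Given $b\in\Cset_A$ with chamber $C_b$, consider $F\circ C_b$. It is a chamber with $F\leq F\circ C_b$, hence in $\Ch(\A)_F$. Its sign vector agrees with $C_b$ on the coordinates in $\supp(a)$, where $F$ vanishes, and is $+$ (relative coordinate $0$) elsewhere. In relative $\{0,1\}$ coordinates, $F\circ C_b$ therefore corresponds exactly to $a\wedge b$. So faciality forces $a\wedge b\in\Cset_A$, contradicting the hypothesis. Hence $\Tpos_{\leq a}$ is not facial, and \Cref{lem:nonfacial_acyclic} gives that $\Npos_{\leq a}$ is acyclic.

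The main obstacle is only the bookkeeping: checking that $F\circ C_b$ really corresponds to the coordinatewise meet $a\wedge b$ under the identification with $A$ as the positive side. There, $0$ means the same side as $A$, and $F$ carries $A$'s signs off $\supp(a)$. The subtle point is the correct identification of $F$, namely its zero set being exactly $S(A,C_a)$, which the paper's remark on facial intervals supplies.
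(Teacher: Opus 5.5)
Your proposal is correct and is essentially the paper's proof: reduce to \Cref{lem:nonfacial_acyclic}, identify the face $F$ with zero set $\{i\mid a_i=1\}$ and $A$'s signs elsewhere, and observe that the Tits product $F\circ b$ is a chamber whose sign vector is $a\wedge b$. The only differences are minor: you take the description of $F$ from the remark after the definition of facial intervals, whereas the paper derives it directly, and your checks that $F\circ C_b\in\Ch(\A)_F$ and that $a\neq\zero$ are harmless but not needed.
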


\begin{proof}
By \Cref{lem:nonfacial_acyclic}, it suffices to show that $\Tpos_{\leq a}$ is not facial.
Assume, for a contradiction, that $\Tpos_{\leq a}$ is facial.  Then there is
a face $F$ such that
\[
        \Tpos_{\leq a}=\{C\in\Ch(\A)\mid F\leq C\},
\]
where the chambers are written in sign coordinates based at $A$.
Let
\[
        S=\{i\mid a_i=1\}.
\]
For each $i\in S$, the chambers $\zero$ and $a$ lie on opposite sides of
$H_i$, and both contain $F$ in their closures.  Hence $F\subseteq H_i$.
For $i\notin S$, every chamber in $\Tpos_{\leq a}$ lies on the same side of
$H_i$ as $A$; hence $F$ is not contained in $H_i$ and lies on the $A$-side of
$H_i$.

Consider the Tits product $F\circ b$.  In the $0$-$1$ coordinates based at
$A$, the coordinates in which $F\subseteq H_i$ inherit the side of $b$, while the
remaining coordinates keep the $A$-side determined by $F$.  Thus
\[
        (F\circ b)_i=
        \begin{cases}
        b_i,& i\in S,\\
        0,& i\notin S,
        \end{cases}
\]
which is exactly $a\wedge b$.  The Tits product of a face and a chamber is a
chamber.  Hence $a\wedge b$ is a chamber sign vector, contradicting the
hypothesis.
\end{proof}

For $\gamma\in\NN^{\oplus n}$ with $\gamma\not\leq\one$, define
\[
        \Sigma(\gamma)
        :=
        \{\gamma-\one_X
        \mid
        \varnothing\neq X\subseteq\{i\mid \gamma_i\geq2\}\},
\]
where $\one_X$ is the characteristic vector of $X$.
The family of lower ideals
$\{\Npos_{\leq\gamma'}\mid \gamma'\in\Sigma(\gamma)\}$ is closed under pairwise intersections, since
\[
        (\gamma-\one_X)\wedge(\gamma-\one_Y)
        =\gamma-\one_{X\cup Y}.
\]

\begin{proposition}[Acyclicity]\label{prop:strong_acyclicity}
Let $q\in\Cset_A$ be a chamber sign vector, and let $\gamma\in\NN^{\oplus n}$.  If
\[
        \gamma\wedge q\in\Npos,
\]
then $\Npos_{\leq\gamma}$ is acyclic.
\end{proposition}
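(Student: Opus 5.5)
We argue by induction on $\lvert\gamma\rvert$, simultaneously for all chamber sign vectors $q\in\Cset_A$. Two cases are immediate. If $\gamma\in\Npos$, then $\Npos_{\leq\gamma}$ has maximum $\gamma$, so it is a cone and hence acyclic. So assume $\gamma\in\Tpos$.

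If moreover $\gamma\leq\one$, then $\gamma=a$ is a chamber sign vector with $a<\one$, because $\one\notin\Tpos$. Also $a\wedge q\leq a<\one$, so $a\wedge q\neq\one$. Hence the hypothesis $a\wedge q\in\Npos$ says exactly that $a\wedge q$ is not a chamber sign vector. Then \Cref{lem:nonfacial_criterion}, applied with $b=q$, gives the acyclicity of $\Npos_{\leq a}$. This settles the base of the induction.

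Now let $\gamma\in\Tpos$ with $\gamma\not\leq\one$, and put $Y=\{i\mid\gamma_i\geq2\}\neq\varnothing$. For every nonempty $X\subseteq Y$ we have $(\gamma-\one_X)\wedge q=\gamma\wedge q$, because $q$ is a $0$-$1$ vector and $\gamma_i-1\geq1$ on $X$. So the induction hypothesis, with the same $q$, shows that every $\Npos_{\leq\gamma'}$ with $\gamma'\in\Sigma(\gamma)$ is acyclic. As noted before the statement, these lower ideals are closed under pairwise intersection. By \Cref{lem:gluing}, the union
\[
P:=\bigcup_{\gamma'\in\Sigma(\gamma)}\Npos_{\leq\gamma'}
\]
is therefore acyclic. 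It remains to pass from $P$ to $Q:=\Npos_{\leq\gamma}$, and for this we use \Cref{lem:homological-quillen}. The elements of $Q\setminus P$ are the $x\in\Npos$ with $x\leq\gamma$ and $x|_Y=\gamma|_Y$. For such $x$ one has
\[
P_{\leq x}=\bigcup_{\varnothing\neq X\subseteq Y}\Npos_{\leq x-\one_X},
\]
again a family closed under intersections. By \Cref{lem:gluing} once more, it suffices to show that each $\Npos_{\leq x-\one_X}$ is acyclic. Since $\lvert x-\one_X\rvert<\lvert\gamma\rvert$, this would follow from the induction hypothesis once we find a chamber sign vector $q'$ with $(x-\one_X)\wedge q'\in\Npos$.

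Producing such a $q'$ is the main obstacle. The natural first attempt is $q'=q$. Then $(x-\one_X)\wedge q$ agrees with $\gamma\wedge q$ on $Y$, and equals $x\wedge q$ off $Y$, where $x\leq\gamma$ can only lower coordinates. Whether this vector stays in $\Npos$ has to be controlled using $x\in\Npos$, that is, the fact that $\ol{x}$ is not a chamber sign vector (note $x\neq\one$, since $x|_Y=\gamma|_Y$ has entries at least $2$). If $q'=q$ fails, I would modify $q$ via Tits products. The idea is to take $q'=F\circ q$ for a face $F$ adapted to the coordinates where $x$ and $\gamma$ differ, so that the parity obstruction carried by $\ol{x}$ is transported into $(x-\one_X)\wedge q'$. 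This is the same mechanism as in \Cref{lem:nonfacial_criterion}. The aim is to show that the failure of $\ol{x}$ to be a chamber survives after meeting with a suitably chosen chamber.

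If no uniform choice of $q'$ works, the fallback is to strengthen the induction statement to quantify over all pairs $(\gamma,q)$ ordered by $\lvert\gamma\rvert$. The case where $(x-\one_X)\wedge q$ lands in $\Tpos$ would then be handled by a secondary induction on the number of coordinates where $x<\gamma$. Once every $\Npos_{\leq x-\one_X}$ is shown to be acyclic, \Cref{lem:homological-quillen} gives $\widetilde H_\ast(Q)\cong\widetilde H_\ast(P)=0$, which completes the induction.
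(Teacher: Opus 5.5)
Your reduction is sound and matches the paper's argument step for step. This covers the cone case $\gamma\in\Npos$, the case $\gamma\leq\one$ via \Cref{lem:nonfacial_criterion} with $b=q$, the gluing of the $\Npos_{\leq\gamma'}$ for $\gamma'\in\Sigma(\gamma)$ using the \emph{same} $q$, and the description $P_{\leq x}=\bigcup_{\varnothing\neq X\subseteq Y}\Npos_{\leq x-\one_X}$ of the lower fibers for \Cref{lem:homological-quillen}. The gap is at the one step that needs an idea. You never exhibit a chamber sign vector $q'$ with $(x-\one_X)\wedge q'\in\Npos$, and the alternatives you sketch (a Tits product $F\circ q$, a secondary induction) are not carried out. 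So the induction is not closed.

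The natural choice $q'=q$ does fail. Take $U_{2,3}$ with the lines labelled cyclically, so $\Cset_A=\{000,100,110,111,011,001\}$. Let $q=\one$, $\gamma=(2,1,1)$, $x=(2,1,0)$ and $X=\{1\}$. Then $\gamma\wedge q=\one\in\Npos$, and $\ol\gamma=011\in\Cset_A$ gives $\gamma\in\Tpos$. Also $\ol x=010\notin\Cset_A$, so $x\in\Npos_{\leq\gamma}\setminus P$. But $(x-\one_X)\wedge q=(1,1,0)$ is a chamber sign vector other than $\one$, so it lies in $\Tpos$. Meeting with $q$ records positivity of coordinates, not parity, and this loses the obstruction carried by $\ol x$.

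The missing idea is to switch to $q'=\ol\gamma$. It is a chamber sign vector exactly because you are in the case $\gamma\in\Tpos$. With this choice, $(x-\one_X)\wedge\ol\gamma=\ol x$:
\begin{itemize}
\item For $i\in Y$, $(x-\one_X)_i\geq\gamma_i-1\geq 1$, and $\ol x_i=\ol\gamma_i$ because $x_i=\gamma_i$.
\item For $i\notin Y$, $(x-\one_X)_i=x_i\leq\gamma_i=\ol\gamma_i\leq 1$, so the meet is $x_i=\ol x_i$.
\end{itemize}
Since $x\in\Npos$ and $x\neq\one$, we have $\ol x\notin\Cset_A$. As $\ol x$ is its own parity vector, $\ol x\in\Npos$. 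The induction hypothesis applied to $(x-\one_X,\ol\gamma)$ then makes every $\Npos_{\leq x-\one_X}$ acyclic, and the rest of your argument goes through.

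The chamber changes from $q$ in the $\Sigma(\gamma)$ step to $\ol\gamma$ in the fiber step. This is why the proposition is stated for an arbitrary chamber $q$ rather than only for $q=\one$.
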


\begin{proof}
We prove the statement by induction on $|\gamma|$.
If $\gamma\in\Npos$, then $\Npos_{\leq\gamma}$ has the maximum element
$\gamma$, and is therefore contractible.  Hence we may assume
$\gamma\in\Tpos$.

Suppose first that $\gamma\leq\one$.  Then $\gamma$ is a chamber sign vector
and $\gamma\neq\one$.
In particular, $\gamma\wedge q\in \Npos\setminus\{\one\}$ and hence $\gamma\wedge q$ is not a chamber sign vector.
\Cref{lem:nonfacial_criterion}
shows that $\Npos_{\leq\gamma}$ is acyclic.

It remains to treat the case $\gamma\not\leq\one$.  Put
$p:=\gamma\wedge q\in\Npos$.
For every $\gamma'\in\Sigma(\gamma)$, we have
\[
        \gamma'\wedge q=p.
\]
Indeed, decreasing a coordinate of $\gamma$ that is at least $2$ does not
change whether that coordinate is positive after taking the meet with the
$0$--$1$ vector $q$.  Since $|\gamma'|<|\gamma|$, the induction hypothesis gives that
$\Npos_{\leq\gamma'}$ is acyclic for every $\gamma'\in\Sigma(\gamma)$.

Define a subposet $W\subseteq\Npos_{\leq \gamma}$ by
\[
        W:=\bigcup_{\gamma'\in\Sigma(\gamma)}\Npos_{\leq\gamma'}.
\]
By \Cref{lem:gluing}, $W$ is acyclic.  We will show that the inclusion
$W\hookrightarrow\Npos_{\leq\gamma}$ induces an isomorphism on reduced
homology, using \Cref{lem:homological-quillen}.
Let $\beta\in\Npos_{\leq\gamma}\setminus W$.
Set
\[
        H:=\{i\mid \gamma_i\geq2\}.
\]
Since $\beta\notin W$, one must have $\beta_i=\gamma_i$ for $i\in H$;
otherwise $\beta\leq\gamma-\one_{\{i\}}$ for some $i\in H$, and hence
$\beta\in W$.  Since $H$ is nonempty, $\beta\neq\one$.  As
$\beta\in\Npos$, it follows that the parity vector $\ol\beta$ is not a
chamber sign vector.

We claim that
\[
        W_{\leq\beta}
        =
        \bigcup_{\varnothing\neq X\subseteq H}
        \Npos_{\leq\beta-\one_X}.
\]
The inclusion from right to left is clear.  Conversely, if
$\eta\in W_{\leq\beta}$, then $\eta\leq\gamma-\one_Y$ for some nonempty
$Y\subseteq H$.  Hence $\eta_i\leq\beta_i-1$ for every $i\in Y$, so
$\eta\leq\beta-\one_Y$.

For each nonempty $X\subseteq H$, put
\[
        \eta_X:=\beta-\one_X.
\]
Because $\gamma\in\Tpos$, its parity vector $\ol\gamma$ is a chamber sign
vector.  Moreover
\[
        \eta_X\wedge\ol\gamma=\ol\beta.
\]
Indeed, for $i\in H$, the equality follows from $\eta_{X,i}\geq 1$ and $\overline{\beta}_i=\overline{\gamma}_i$.
For $i\notin H$, the equality is immediate from $\eta_{X,i}=\beta_i=\overline{\beta}_i\leq \overline{\gamma}_i$.
Since $\ol\beta\in\Npos$ and
$\lvert\eta_X\rvert<\lvert\gamma\rvert$, the induction hypothesis, applied with the chamber sign
vector $\ol\gamma$, gives that $\Npos_{\leq\eta_X}$ is acyclic.

The family
$\{\Npos_{\leq\beta-\one_X}\}_{\varnothing\neq X\subseteq H}$ is closed under
intersections, because
\[
(\beta-\one_X)\wedge(\beta-\one_Y)=\beta-\one_{X\cup Y}.
\]
Hence \Cref{lem:gluing} shows that $W_{\leq\beta}$ is
acyclic.  By \Cref{lem:homological-quillen}, the inclusion
$W\hookrightarrow\Npos_{\leq\gamma}$ induces isomorphisms on reduced homology.
Since $W$ is acyclic, so is $\Npos_{\leq\gamma}$.
\end{proof}

\begin{figure}[htbp]
\begin{center}
\begin{tikzpicture}[
  scale=1.1,
  dot/.style={circle, fill=black, inner sep=1.3pt},
  line/.style={line width=0.6pt}
]

% nodes
\node[dot,label=right:$\gamma\in \Tpos$] (gamma) at (2,3) {};
\node[dot,label=left:$\Npos\ni\beta$] (beta) at (0,2) {};
\node[dot,label=left:$\eta_X$] (etaX) at (-0.5,1) {};
\node[dot,label=left:$\eta_Y$] (etaY) at (0.5,1) {};
\node[dot,label=left:$\Npos\ni\overline{\beta}$] (betabar) at (0,0) {};
\node[dot,label=right:$\overline{\gamma}\in \Tpos$] (gammabar) at (2,1) {};

% cover relations
\draw[line] (gamma) -- (beta);
\draw[line] (gamma) -- (gammabar);
\draw[line] (beta) -- (etaX);
\draw[line] (etaX) -- (betabar);
\draw[line] (beta) -- (etaY);
\draw[line] (etaY) -- (betabar);
\draw[line] (gammabar) -- (betabar);

\end{tikzpicture}
\end{center}
\caption{Relationships among the elements appearing in the proof of \Cref{prop:strong_acyclicity}}
\label{fig:main_Hasse}
\end{figure}

\begin{proof}[Proof of \Cref{prop:target}]
Take $q=\one$.  Since $\A$ is central, $\one$ is a chamber sign
vector.  If $\alpha\geq\one$, then
\[
        \alpha\wedge q=\one\in\Npos.
\]
\Cref{prop:strong_acyclicity} therefore gives the acyclicity of
$\Npos_{\leq\alpha}$.
\end{proof}

\subsection{Conclusion of the proof}
Let $\alpha\geq\one$.
By \Cref{prop:target}, the poset
$\Npos_{A,\leq\alpha}$ is acyclic for every base chamber $A$.
By the universal coefficient theorem, we obtain $\widetilde H^\ast(\Delta_\alpha^{\Npos_A})=0$.
Hence \Cref{lem:T_N_alexander} gives
\[
        H_k(\Delta_\alpha^{\Tpos_A},
             \Delta_\alpha^{\Tpos_A}\cap\partial\Delta_\alpha)=0
\]
for all $k$ and all $A$.
By \Cref{lem:nonspecial_to_poset}, each summand $N_{\ast,\alpha}(\Tope(\A))_{A\to \bullet}$ of $N_{\ast,\alpha}(\Tope(\A))$ is acyclic and hence $N_{\ast,\alpha}(\Tope(\A))$ itself is acyclic.
In particular, the direct summand $N_{\ast,\alpha}(\Tope(\A))_{\bullet\to B}$ of $N_{\ast,\alpha}(\Tope(\A))$ is acyclic.
Therefore the quotient map
\[
    \MC_{\ast,\alpha}(\Tope(\A))_{\bullet\to B}\twoheadrightarrow Sp_{\ast,\alpha}(\Tope(\A))_{\bullet\to B}
\]
is a quasi-isomorphism.
Combining this with
\Cref{prop:special_homology} proves \Cref{thm:periodicity}.

\section{Magnitude homology of tope graphs}\label{sec:affine_closed}

We determine the magnitude homology of the tope graphs of affine hyperplane arrangements.

\subsection{Face flags}
Let $\A$ be an affine hyperplane arrangement and $B\in \Ch(\A)$.
A \emph{face flag} in $\A$ relative to $B$ is a finite weakly decreasing chain
\[
        \mathscr F=(F_1\geq F_2\geq\cdots\geq F_m),
        \quad F_i\in\Face(\A)_{<B},
\]
where $m\geq0$.
Equalities among the $F_i$ are allowed.
Define
\[
    \rank(\mathscr F):=
    \sum_{i=1}^m \rank(\A_{F_i}),
    \quad
    \ell(\mathscr F):=
    \sum_{i=1}^m \#\A_{F_i},
    \quad\text{and}\quad
    \vec{\ell}(\mathscr F):=
    \sum_{i=1}^m \one_{\A_{F_i}}\in\NN^{\oplus \A},
\]
where $\one_{\A_{F_i}}$ denotes the characteristic vector of the subset $\A_{F_i}\subseteq\A$.
Empty sums are understood to be zero.
The pair $(\rank(\mathscr{F}),\ell(\mathscr{F}))$ is called the \emph{bidegree} of $\mathscr{F}$.
For $\alpha\in\NN^{\oplus \A}$ and $k,\ell\geq0$, put
\begin{align*}
        \Flag_{k,\ell}(\A)_B
        &:=\{\mathscr F\mid
              \rank(\mathscr F)=k,
              \ \ell(\mathscr F)=\ell\},\\
        \Flag_{k,\alpha}(\A)_B
        &:=\{\mathscr F\mid
              \rank(\mathscr F)=k,
              \ \vec{\ell}(\mathscr F)=\alpha\},\\
        \Flag_{k,\ell}^\circ(\A)_B
        &:=\{\mathscr F\mid
              \rank(\mathscr F)=k,
              \ \ell(\mathscr{F})=\ell,
              \ \supp(\vec{\ell}(\mathscr F))=\A\}.
\end{align*}

\begin{lemma}\label{lem:flag_support}
Let $\A$ be an affine hyperplane arrangement and let $B\in \Ch(\A)$.
Let $\zero\neq\alpha\in\NN^{\oplus\A}$.
Then, for any $\mathscr F=(F_1\geq\cdots\geq F_m)\in \Flag_{k,\alpha}(\A)_B$, we have
\[
    \A_{F_m}=\supp(\alpha).
\]
\end{lemma}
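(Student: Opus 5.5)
Since the flag is weakly decreasing, $F_1\geq F_2\geq\cdots\geq F_m$ in the face poset means $F_m\subseteq\overline{F_{m-1}}\subseteq\cdots\subseteq\overline{F_1}$. The proof reduces to comparing the localizations $\A_{F_i}$, which are monotone in $i$, and then reading off the support of $\vec\ell(\mathscr F)$.

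\emph{Step 1: monotonicity of localizations.} I claim that $F\leq G$ implies $\A_G\subseteq\A_F$. Let $H\in\A_G$, so $G\subseteq H$. Since $H$ is closed, $\overline G\subseteq H$. From $F\subseteq\overline G$ we get $F\subseteq H$, hence $H\in\A_F$. Applying this along the flag gives
\[
\A_{F_1}\subseteq\A_{F_2}\subseteq\cdots\subseteq\A_{F_m}.
\]

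\emph{Step 2: computing the support.} By definition, $\alpha=\vec\ell(\mathscr F)=\sum_{i=1}^m\one_{\A_{F_i}}$ has nonnegative summands. A coordinate $H$ is therefore positive exactly when $H\in\A_{F_i}$ for some $i$. This gives
\[
\supp(\alpha)=\bigcup_{i=1}^m\A_{F_i}.
\]
By Step 1, this union equals $\A_{F_m}$. Here we need $m\geq1$, which holds because $\alpha\neq\zero$ and the empty flag has $\vec\ell=\zero$.

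I do not expect any real obstacle. The only points to check are the direction of the order on faces ($F\leq G$ iff $F\subseteq\overline G$) and the exclusion of the empty flag via $\alpha\neq\zero$. The hypothesis $F_i<B$ is not needed for this statement.
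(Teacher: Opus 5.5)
Your proof is correct and is essentially the paper's argument. In both, the localizations $\A_{F_1}\subseteq\cdots\subseteq\A_{F_m}$ increase along the flag, so the support of $\alpha=\sum_i\one_{\A_{F_i}}$ is their union $\A_{F_m}$; you additionally justify the monotonicity (via closedness of $H$) and the need for $m\geq 1$, which the paper leaves implicit.
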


\begin{proof}
Since $F_1\geq\cdots\geq F_m$, the corresponding subsets of hyperplanes form an increasing chain
\[
        \A_{F_1}\subseteq\cdots\subseteq\A_{F_m}.
\]
The support of $\alpha$ is therefore the union of these subsets, which is $\A_{F_m}$.
\end{proof}

\begin{lemma}[Localization for face flags]\label{lem:flag_inductive}
Let $\A$ be an affine hyperplane arrangement and let $B\in \Ch(\A)$.
Let $\zero\neq\alpha\in\NN^{\oplus \A}$ and put $\A_{\alpha}=\supp(\alpha)$.
If there exists a face $F\in \Face(\A)_{<B}$ with $\A_F=\A_{\alpha}$, then there is a bijection
\[
        \Flag_{k,\alpha}(\A)_B
        \cong
        \Flag_{k,\alpha|_{\A_\alpha}}(\A_\alpha)_{B_\alpha},
\]
where $B_\alpha$ is the unique chamber of $\A_\alpha$ containing $B$.
Otherwise, we have
\[
        \Flag_{k,\alpha}(\A)_B=\varnothing.
\]
\end{lemma}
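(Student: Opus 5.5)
The plan is to mirror the proof of \Cref{thm:localization}, but purely combinatorially. Start from a face flag $\mathscr F=(F_1\geq\cdots\geq F_m)\in\Flag_{k,\alpha}(\A)_B$. Since $\alpha\neq\zero$, we have $m\geq 1$, and \Cref{lem:flag_support} gives $\A_{F_m}=\A_\alpha$. Because $F_m\in\Face(\A)_{<B}$, the face $F:=F_m$ witnesses the existence hypothesis. So if no face $F<B$ with $\A_F=\A_\alpha$ exists, the set $\Flag_{k,\alpha}(\A)_B$ is empty. This settles the second assertion.

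For the first assertion, let $U$ be the chamber of $\A\setminus\A_\alpha$ containing $B$. As in the proof of \Cref{thm:localization}, a face $F<B$ with $\A_F=\A_\alpha$ is unique; concretely it is the face $U\cap\bigcap\A_\alpha$. Its sign vector agrees with $B$ off $\A_\alpha$ and is $0$ on $\A_\alpha$. In any flag of length vector $\alpha$, every $F_i$ satisfies $F_i\geq F_m=F$ and $F_i\leq B$. So every $F_i$ lies in the interval $[F,B]$ of $\Face(\A)$.

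The key step is an order isomorphism
\[
[F,B]_{\Face(\A)}\cong \Face(\A_\alpha)_{\leq B_\alpha},\qquad G\mapsto (G_H)_{H\in\A_\alpha}.
\]
Faces $G$ with $F\leq G\leq B$ have $G_H=B_H$ for $H\notin\A_\alpha$, so restriction to $\A_\alpha$ is injective and order-preserving in both directions. For surjectivity, take a face $G'\leq B_\alpha$ of the central arrangement $\A_\alpha$ and set $G:=F\circ$(lift). Geometrically, choose $x\in F$, note that a small neighborhood of $x$ meets only hyperplanes of $\A_\alpha$, and pick a point of $G'$ near $x$ (the arrangement $\A_\alpha$ is central with center containing $x$, so faces are cones at $x$). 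This shows that the face of $\A$ through that point restricts to $G'$ and lies in $[F,B]$. This local-structure argument is the main point needing care, though it is standard: near a point of $F$, the arrangement $\A$ looks like $\A_\alpha$.

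Under this isomorphism the hyperplane data are preserved. For $G\in[F,B]$ we have $\A_G\subseteq\A_F=\A_\alpha$, and $\A_G=(\A_\alpha)_{G'}$ as sets of hyperplanes. The localizations $\A_G$ and $(\A_\alpha)_{G'}$ are the same arrangement, so both $\rank$ and $\one_{\A_G}$ agree. Moreover $G<B$ iff $G'<B_\alpha$.

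Hence applying the isomorphism termwise sends flags in $\Flag_{k,\alpha}(\A)_B$ to flags in $\Flag_{k,\alpha|_{\A_\alpha}}(\A_\alpha)_{B_\alpha}$. For the inverse, take a flag on the $\A_\alpha$ side with length vector $\alpha|_{\A_\alpha}$. By \Cref{lem:flag_support} its last face is the center of $\A_\alpha$, which corresponds to $F$. All its faces lie in $\Face(\A_\alpha)_{\leq B_\alpha}$, so they pull back along the isomorphism. These two maps are mutually inverse and preserve bidegree and length vector, which gives the desired bijection.
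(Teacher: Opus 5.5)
Your proof is correct and takes the same approach as the paper. Both arguments use \Cref{lem:flag_support} to force $F_m$ to be the unique face below $B$ with zero set $\A_\alpha$, so that every face of the flag lies in $\Face(\A)_{\geq F,<B}$, and both then transport flags along the bijection $\Face(\A)_{\geq F,<B}\cong\Face(\A_\alpha)_{<B_\alpha}$ given by restricting sign vectors; the emptiness case is handled identically, and your verification of surjectivity (via the local structure near a point of $F$) and of the preservation of $\A_G$, rank and length vector spells out details the paper only asserts.
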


\begin{proof}
Suppose first that there exists a face $F\in \Face(\A)_{<B}$ with $\A_F=\A_\alpha$.
In this case, there is a bijection
\[
\iota\colon \Face(\A)_{\geq F,<B}\xrightarrow{\sim}\Face(\A_\alpha)_{<B_\alpha}
\]
sending a face $E$ to the unique face of $\A_\alpha$ containing $E$.
If $\mathscr{F}=(F_1\geq \cdots\geq F_m)\in \Flag_{k,\alpha}(\A)_B$, then \Cref{lem:flag_support} shows that $\A_{F_m}=\A_{\alpha}=\A_F$.
Both $F$ and $F_m$ lie below $B$ and have the same zero set
$\A_F=\A_{F_m}$; on every remaining hyperplane their signs agree with the sign of
$B$.  Hence their sign vectors coincide and $F_m=F$.
Therefore any element of $\Flag_{k,\alpha}(\A)_B$ consists of faces from $\Face(\A)_{\geq F,<B}$.
Thus $\iota$ induces the claimed bijection of face flags.

Suppose next that there is no such $F$.
If there is an element $\mathscr{F}=(F_1\geq \cdots\geq F_m)\in \Flag_{k,\alpha}(\A)_B$, then $F_m\in \Face(\A)_{<B}$, and \Cref{lem:flag_support} shows that $\A_{F_m}=\A_{\alpha}$, which contradicts our assumption.
Therefore $\Flag_{k,\alpha}(\A)_B$ is empty.
\end{proof}

\begin{lemma}[Periodicity for face flags]\label{lem:flag_periodicity}
    Let $\A$ be a central hyperplane arrangement of rank $r$, and fix a chamber $B\in \Ch(\A)$.
    Let $\alpha\in\NN^{\oplus \A}$ satisfy $\supp(\alpha)=\A$ (equivalently, $\alpha\geq\one$).
    Then there is a bijection
    \[
        \Flag_{k,\alpha}(\A)_B
        \cong
        \Flag_{k-r,\alpha-\one}(\A)_B,
    \]
    where the right-hand side is interpreted as empty for $k<r$.
\end{lemma}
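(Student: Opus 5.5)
The bijection will append or remove the center. Since $\A$ is central, its center $F_\A=\bigcap_{H\in\A}H$ is a face of $\A$ with $\A_{F_\A}=\A$ and $\rank(\A_{F_\A})=\codim F_\A=r$. It is also the minimum of $\Face(\A)$, so $F_\A\leq G$ for every face $G$. When $\A\neq\varnothing$ we have $F_\A<B$, so $F_\A\in\Face(\A)_{<B}$. The case $\A=\varnothing$ is trivial: then $\alpha=\zero$ and $r=0$.

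Define
\[
\Phi\colon \Flag_{k-r,\alpha-\one}(\A)_B\to\Flag_{k,\alpha}(\A)_B,\qquad (F_1\geq\cdots\geq F_m)\mapsto(F_1\geq\cdots\geq F_m\geq F_\A).
\]
The new sequence is still weakly decreasing because $F_\A$ is the minimum face. Its rank increases by $\rank(\A_{F_\A})=r$, and its length vector increases by $\one_{\A_{F_\A}}=\one$. Hence $\Phi$ is well defined with the stated bidegrees.

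For the inverse, take $\mathscr F=(F_1\geq\cdots\geq F_m)\in\Flag_{k,\alpha}(\A)_B$. Since $\alpha\geq\one$ and $\A\neq\varnothing$, we have $\alpha\neq\zero$, so $m\geq1$. By \Cref{lem:flag_support}, $\A_{F_m}=\supp(\alpha)=\A$, so $F_m\subseteq\bigcap\A=F_\A$. A face is determined by its sign vector, and $F_m$ has sign vector identically zero, which is exactly the sign vector of $F_\A$. Therefore $F_m=F_\A$. (Equivalently, as in the proof of \Cref{lem:flag_inductive}, both faces lie below $B$ and have the same zero set $\A$.)

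Deleting $F_m$ gives a face flag $(F_1\geq\cdots\geq F_{m-1})$ of rank $k-r$ and length vector $\alpha-\one$. Its rank $k-r$ is a sum of ranks and hence nonnegative, which shows that $\Flag_{k,\alpha}(\A)_B=\varnothing$ when $k<r$, consistent with the convention in the statement. Deletion is clearly inverse to $\Phi$, so $\Phi$ is a bijection.

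There is no serious obstacle. The only step needing care is the identification $F_m=F_\A$, which rests on \Cref{lem:flag_support} together with the centrality of $\A$: centrality guarantees that $F_\A$ is nonempty, is a face, and lies below $B$.
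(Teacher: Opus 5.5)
Your proposal is correct and follows the paper's proof: both treat $\A=\varnothing$ separately, then use \Cref{lem:flag_support} to force the last face of any flag in $\Flag_{k,\alpha}(\A)_B$ to be the center $F_\A$, and construct the bijection by deleting or appending $F_\A$. Your additional checks ($F_\A<B$, the minimality of $F_\A$, $m\geq1$, and nonnegativity of $k-r$) correctly fill in details the paper leaves implicit.
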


\begin{proof}
If $\A=\varnothing$, then $r=0$, $\alpha=\zero$, and the identity map gives the
claimed bijection.  We henceforth assume that $\A$ is nonempty.
Let $\mathscr{F}=(F_1\geq \cdots\geq F_m)\in \Flag_{k,\alpha}(\A)_B$.
Then \Cref{lem:flag_support} shows that $\A_{F_m}=\supp(\alpha)=\A$ and hence $F_m$ is the center $F_\A$.
Deleting $F_m=F_\A$ gives a face flag $(F_1\geq \cdots\geq F_{m-1})\in \Flag_{k-r,\alpha-\one}(\A)_B$.
Conversely, given a face flag in $\Flag_{k-r,\alpha-\one}(\A)_B$, we may append $F_\A$ as the last face to obtain a face flag in $\Flag_{k,\alpha}(\A)_B$.
\end{proof}

\subsection{Magnitude homology and face flags}
As we have seen above, the magnitude homology and the set of face flags of a hyperplane arrangement satisfy exactly the same recurrence.
This yields the following theorem.

\begin{theorem}\label{thm:closed}
Let $\A$ be an affine hyperplane arrangement and $B\in \Ch(\A)$.
\begin{enumerate}
    \item $\MH_{k,\alpha}(\Tope(\A))_{\bullet\to B}\cong\ZZ^{\oplus\Flag_{k,\alpha}(\A)_B}$.
    \item $\MH_{k,\ell}(\Tope(\A))_{\bullet\to B}\cong\ZZ^{\oplus\Flag_{k,\ell}(\A)_B}$.
    \item $\MH_{k,\ell}^\circ(\Tope(\A))_{\bullet\to B}\cong\ZZ^{\oplus\Flag_{k,\ell}^\circ(\A)_B}$.
\end{enumerate}
\end{theorem}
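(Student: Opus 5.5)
Part (1) is proved by induction on $|\alpha|$, running the same recursion for magnitude homology and for face flags. For the base case $\alpha=\zero$, only proper $0$-chains $(B)$ have length vector $\zero$. Hence $\MH_{k,\zero}(\Tope(\A))_{\bullet\to B}$ is $\ZZ$ for $k=0$ and $0$ otherwise. On the other side, the only face flag with $\vec\ell(\mathscr F)=\zero$ is the empty flag of rank $0$. Here we use that every face $F<B$ has $\A_F\neq\varnothing$, so each face in a flag contributes a nonzero vector. The two sides therefore agree.

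For the inductive step with $\alpha\neq\zero$, we compare \Cref{thm:localization} with \Cref{lem:flag_inductive}; both split into the same two cases. If no face $F\in\Face(\A)_{<B}$ has $\A_F=\A_\alpha$, both sides vanish. Otherwise both sides are identified with the corresponding objects for the central arrangement $\A_\alpha$, the chamber $B_\alpha$, and $\alpha|_{\A_\alpha}$. The support of this vector is all of $\A_\alpha$, so we may apply \Cref{thm:periodicity} and \Cref{lem:flag_periodicity} with $r=\rank\A_\alpha$. These reduce the pair to $(k-r,\alpha|_{\A_\alpha}-\one)$ for $\A_\alpha$, where $|\alpha|_{\A_\alpha}-\one|<|\alpha|$. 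For $k<r$ both sides are zero or empty.

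One point needs care. The induction must range over all affine arrangements simultaneously, since the localized arrangement $\A_\alpha$ differs from $\A$; this is harmless because we induct on $|\alpha|$. We also need $\A_\alpha$ to be locally finite, which holds because it is central and hence finite. Combining the inductive hypothesis with the two chains of isomorphisms and bijections gives $\MH_{k,\alpha}\cong\ZZ^{\oplus\Flag_{k,\alpha}}$. The main subtlety is checking that the two localization statements use the same chamber $B_\alpha$ and the same restricted vector, so that the identifications match. This is immediate from their statements, and the hard input, periodicity, is already established.

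Part (2) follows from (1) by summing over all $\alpha$ with $|\alpha|=\ell$. We use the decomposition $\MH_{k,\ell}=\bigoplus_{|\alpha|=\ell}\MH_{k,\alpha}$ and the corresponding disjoint union $\Flag_{k,\ell}(\A)_B=\bigsqcup_{|\alpha|=\ell}\Flag_{k,\alpha}(\A)_B$; both restrict to the $\bullet\to B$ summands.

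For part (3), the $\A$-supported subcomplex is exactly $\bigoplus_{\supp\alpha=\A,\,|\alpha|=\ell}\MC_{\ast,\alpha}$, since the support of a chain is determined by its length vector $\alpha$. So $\MH^\circ_{k,\ell}(\Tope(\A))_{\bullet\to B}=\bigoplus_{\supp\alpha=\A}\MH_{k,\alpha}(\Tope(\A))_{\bullet\to B}$. Applying (1) termwise gives the flags with $\supp\vec\ell(\mathscr F)=\A$, which is $\Flag^\circ_{k,\ell}(\A)_B$. When $\A$ is infinite, both sides are zero or empty.
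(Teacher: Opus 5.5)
Your proposal is correct and follows the paper's proof essentially step for step. It inducts on $\lvert\alpha\rvert$ from the base case $\alpha=\zero$. It then pairs \Cref{thm:localization} with \Cref{lem:flag_inductive} to reduce to a central arrangement with full support, pairs \Cref{thm:periodicity} with \Cref{lem:flag_periodicity} to lower $\lvert\alpha\rvert$, and sums over $\alpha$ for (2) and (3). Your remarks that the induction runs over all arrangements simultaneously and that every face $F<B$ has $\A_F\neq\varnothing$ make explicit points the paper leaves implicit.
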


\begin{proof}
We first prove (1) by induction on $\lvert\alpha\rvert$.
If $\lvert\alpha\rvert=0$, then $\alpha=\zero$.
We have $P_{0,\zero}(\Tope(\A))_{\bullet\to B}=\{(B)\}$ and $P_{k,\zero}(\Tope(\A))_{\bullet\to B}=\varnothing$ for $k>0$.
Therefore
\[
        \MH_{k,\zero}(\Tope(\A))_{\bullet\to B}
        \cong
        \begin{cases}
        \ZZ,& \text{if }k=0;\\
        0,& \text{if }k>0.
        \end{cases}
\]
On the other hand, we have $\Flag_{0,\zero}(\A)_B=\{()\}$ and $\Flag_{k,\zero}(\A)_B=\varnothing$ for $k>0$, so the claim is proved.

Next, assume that $\lvert\alpha\rvert>0$ and put
$\A_\alpha=\supp(\alpha)\subseteq\A$.
If there is no face $F\in\Face(\A)_{<B}$ with $\A_F=\A_\alpha$, then
\Cref{thm:localization} and \Cref{lem:flag_inductive} give
\[
\MH_{k,\alpha}(\Tope(\A))_{\bullet\to B}=0,
\qquad
\Flag_{k,\alpha}(\A)_B=\varnothing.
\]
Assume that such a face $F$ exists.  Then $\A_\alpha=\A_F$ is central, and
\Cref{thm:localization} and \Cref{lem:flag_inductive} give
\begin{align*}
    \MH_{k,\alpha}(\Tope(\A))_{\bullet\to B}
    &\cong
    \MH_{k,\alpha|_{\A_\alpha}}(\Tope(\A_\alpha))_{\bullet\to B_\alpha},\\
    \Flag_{k,\alpha}(\A)_B
    &\cong
    \Flag_{k,\alpha|_{\A_\alpha}}(\A_\alpha)_{B_\alpha}.
\end{align*}
After replacing $(\A,B,\alpha)$ by
$(\A_\alpha,B_\alpha,\alpha|_{\A_\alpha})$, we may therefore assume that $\A$ is
central and $\supp(\alpha)=\A$.

Let $r=\rank\A$.  The periodicity results \Cref{thm:periodicity} and
\Cref{lem:flag_periodicity} now give
\begin{align*}
    \MH_{k,\alpha}(\Tope(\A))_{\bullet\to B}
    &\cong
    \MH_{k-r,\alpha-\one}(\Tope(\A))_{\bullet\to B},\\
    \Flag_{k,\alpha}(\A)_B
    &\cong
    \Flag_{k-r,\alpha-\one}(\A)_B.
\end{align*}
Since $\A\neq\varnothing$ and
$\lvert\alpha-\one\rvert=\lvert\alpha\rvert-\#\A<\lvert\alpha\rvert$, the
claim follows from the induction hypothesis.

Taking a direct sum over all $\alpha$ with $\lvert \alpha\rvert=\ell$ proves (2).
Similarly, taking a direct sum over all $\alpha$ with $\lvert \alpha\rvert=\ell$ and $\supp(\alpha)=\A$ proves (3).
\end{proof}

\begin{corollary}
    Let $\A$ be an affine hyperplane arrangement.
    Assume that $\Tope(\A)$ is of finite type.
    Then the unique magnitude weighting on $\Tope(\A)$ is given by
    \[
    \wt(B)=\sum_{\ell=0}^\infty\sum_{k=0}^\ell(-1)^k\#\Flag_{k,\ell}(\A)_B\,q^\ell.
    \]
\end{corollary}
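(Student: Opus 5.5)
The plan is to combine \Cref{lem:weight} with part (2) of \Cref{thm:closed}. Since $\Tope(\A)$ is of finite type, \Cref{lem:weight} applies and expresses the unique magnitude weighting as
\[
\wt(B)=\sum_{\ell=0}^\infty\sum_{k=0}^\ell(-1)^k\rank \MH_{k,\ell}(\Tope(\A))_{\bullet\to B}\,q^\ell.
\]
Uniqueness is already part of \Cref{lem:mag_expansion}, so only the coefficients remain to be identified.

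By \Cref{thm:closed}(2), $\MH_{k,\ell}(\Tope(\A))_{\bullet\to B}\cong\ZZ^{\oplus\Flag_{k,\ell}(\A)_B}$. To conclude $\rank\MH_{k,\ell}(\Tope(\A))_{\bullet\to B}=\#\Flag_{k,\ell}(\A)_B$, we need $\Flag_{k,\ell}(\A)_B$ to be finite.

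I would argue finiteness directly from the finite-type hypothesis rather than from the flag combinatorics. The group $\MH_{k,\ell}(\Tope(\A))_{\bullet\to B}$ is finitely generated, since $P_{k,\ell}(\Tope(\A))_{\bullet\to B}$ is finite, as noted in the preliminaries. A free abelian group $\ZZ^{\oplus S}$ is finitely generated only if $S$ is finite, so $\Flag_{k,\ell}(\A)_B$ is finite and its cardinality equals the rank.

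It remains to check the range of summation. The sum in \Cref{lem:weight} runs over $0\leq k\leq\ell$. Every flag in $\Flag_{k,\ell}(\A)_B$ satisfies $k\leq\ell$, because $\rank(\A_{F_i})\leq\#\A_{F_i}$. This is consistent with the vanishing of $P_{k,\ell}$ for $k>\ell$, so no terms are lost.

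Substituting the cardinalities into the formula from \Cref{lem:weight} gives the claimed expression. The only mildly nontrivial point is this finiteness and rank bookkeeping; the rest is immediate.
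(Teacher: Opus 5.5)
Your proposal is correct and is exactly the intended argument: the paper gives no written proof and treats the corollary as an immediate consequence of \Cref{lem:weight} and \Cref{thm:closed}(2). Your observation that $\Flag_{k,\ell}(\A)_B$ is finite because $\MH_{k,\ell}(\Tope(\A))_{\bullet\to B}$ is finitely generated under the finite-type hypothesis is the right way to make the rank equal the cardinality.
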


We define $\Flag_{k,\ell}(\A)$, $\Flag_{k,\alpha}(\A)$, and $\Flag_{k,\ell}^\circ(\A)$ by
\begin{align*}
        \Flag_{k,\ell}(\A)&=\coprod_{B\in \Ch(\A)}\Flag_{k,\ell}(\A)_B,\\
        \Flag_{k,\alpha}(\A)&=\coprod_{B\in \Ch(\A)}\Flag_{k,\alpha}(\A)_B,\\
        \Flag_{k,\ell}^\circ(\A)&=\coprod_{B\in \Ch(\A)}\Flag_{k,\ell}^\circ(\A)_B.
\end{align*}

\begin{corollary}\label{cor:closed_finite}
Let $\A$ be an affine hyperplane arrangement.
\begin{enumerate}
    \item $\MH_{k,\alpha}(\Tope(\A))\cong\ZZ^{\oplus\Flag_{k,\alpha}(\A)}$.
    \item $\MH_{k,\ell}(\Tope(\A))\cong\ZZ^{\oplus\Flag_{k,\ell}(\A)}$.
    \item $\MH_{k,\ell}^\circ(\Tope(\A))\cong\ZZ^{\oplus\Flag_{k,\ell}^\circ(\A)}$.
\end{enumerate}
\end{corollary}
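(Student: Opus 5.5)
The plan is to deduce \Cref{cor:closed_finite} directly from \Cref{thm:closed} by summing over target chambers. The vertex decomposition \eqref{vertexdecomp}, in its refined form for partial cubes recalled in Section~2, gives natural isomorphisms
\[
\MH_{k,\alpha}(\Tope(\A))\cong\bigoplus_{B\in\Ch(\A)}\MH_{k,\alpha}(\Tope(\A))_{\bullet\to B},
\qquad
\MH_{k,\ell}(\Tope(\A))\cong\bigoplus_{B\in\Ch(\A)}\MH_{k,\ell}(\Tope(\A))_{\bullet\to B}.
\]
These hold for arbitrary (possibly infinite) vertex sets, since the chain complexes themselves split as direct sums of the subcomplexes spanned by chains ending at $B$, and homology commutes with direct sums.

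For (1) and (2), I will apply \Cref{thm:closed}(1) and (2) summand by summand. This yields
\[
\bigoplus_{B}\ZZ^{\oplus\Flag_{k,\alpha}(\A)_B}\cong\ZZ^{\oplus\coprod_B\Flag_{k,\alpha}(\A)_B}=\ZZ^{\oplus\Flag_{k,\alpha}(\A)},
\]
and similarly for $\ell$, using the definitions of $\Flag_{k,\alpha}(\A)$ and $\Flag_{k,\ell}(\A)$ as disjoint unions.

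For (3), I need the analogous decomposition of the interior (that is, $\A$-supported) complex. The support of the length vector depends only on the chain, so $\MC_{\ast,\ell}(\Tope(\A);\A)$ is the direct sum over $B$ of its subcomplexes of chains ending at $B$. Its homology $\MH^\circ_{k,\ell}(\Tope(\A))$ is therefore $\bigoplus_B\MH^\circ_{k,\ell}(\Tope(\A))_{\bullet\to B}$, with the summands interpreted as in \Cref{thm:closed}(3). Applying that statement then finishes the argument.

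There is no real obstacle here. The only points to check are that the direct-sum decompositions are compatible with the differentials, which holds because the boundary maps never change the endpoints, and that free abelian groups on disjoint unions are direct sums of the corresponding free groups.
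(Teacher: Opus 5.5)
Your proposal is correct and is essentially the argument the paper intends; the paper states this corollary without proof immediately after defining $\Flag_{k,\alpha}(\A)$, $\Flag_{k,\ell}(\A)$, and $\Flag^\circ_{k,\ell}(\A)$ as disjoint unions over $B\in\Ch(\A)$. That is, you sum \Cref{thm:closed} over target chambers using the vertex decomposition \eqref{vertexdecomp} and its length-vector refinement, and for (3) you use the same endpoint splitting of the $\A$-supported complex that is implicit in \Cref{thm:closed}(3).
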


\begin{corollary}\label{cor:flag_number_from_LA}
    For a finite affine hyperplane arrangement $\A$, the magnitude Betti numbers
    \[
    \beta_{k,\ell}(\Tope(\A))=\rank\MH_{k,\ell}(\Tope(\A))
    \]
    are uniquely determined by the intersection poset $L(\A)$.
\end{corollary}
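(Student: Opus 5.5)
By \Cref{cor:closed_finite}(2), $\beta_{k,\ell}(\Tope(\A))=\#\Flag_{k,\ell}(\A)$. It therefore suffices to show that this number of face flags can be computed from $L(\A)$ alone. The plan is to reorganize the count by grouping face flags according to their last face and reading off all data from flats.

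\textbf{Step 1: rewrite face flags as chains of flats.} For a chamber $B$, the map $F\mapsto \operatorname{aff}(F)$ (equivalently $F\mapsto \A_F$) should send $\Face(\A)_{<B}$ injectively and order-reversingly into $L(\A)$. Injectivity holds because a face below $B$ is determined by its zero set, as in the proof of \Cref{lem:flag_inductive}. Also $\rank(\A_F)=\codim \operatorname{aff}(F)$ and $\#\A_F=\#\{H\in\A\mid H\geq X\}$ in $L(\A)$ with $X=\operatorname{aff}(F)$. So a face flag relative to $B$ is a weakly increasing multichain $X_1\leq\cdots\leq X_m$ in $L(\A)$ of flats supporting faces of $\overline B$, and its bidegree $\bigl(\sum \rank X_i,\ \sum \#\A_{X_i}\bigr)$ depends only on $L(\A)$.

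\textbf{Step 2: swap the summation over chambers.} Summing over $B$, I obtain
\[
\#\Flag_{k,\ell}(\A)=\sum_{X_1\leq\cdots\leq X_m}\#\{B\in\Ch(\A)\mid \overline B \text{ has faces with affine spans } X_1,\dots,X_m\},
\]
with the sum taken over multichains of bidegree $(k,\ell)$. Given $B$, the face with span $X_i$ is $B$'s face there; if $\overline B$ has a face $F_m$ spanning $X_m$, then the faces of $\overline B$ above $F_m$ correspond to faces of the central arrangement $\A_{X_m}$ below its chamber containing $B$. Every flat $X\leq X_m$ in $L(\A_{X_m})\cong[\widehat0,X_m]$ is the span of exactly one such face, because in a central arrangement each flat is the span of some face of every chamber. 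Hence the condition reduces to "$\overline B$ has a face spanning $X_m$". The count of such chambers is $\#\{B\mid F\leq B,\ F\subseteq X_m \text{ a face of dimension }\dim X_m\}$, which equals the number of regions of the restriction $\A^{X_m}$ times the number of chambers of $\A_{X_m}$.

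\textbf{Step 3: both factors are determined by $L(\A)$.} By Zaslavsky's theorem, $\#\Ch(\A^{X})=|\chi_{L(\A)_{\geq X}}(-1)|$ and $\#\Ch(\A_X)=|\chi_{[\widehat0,X]}(-1)|$, which are intersection-lattice data since $L(\A^X)\cong L(\A)_{\geq X}$ and $L(\A_X)\cong[\widehat0,X]$. Substituting gives a formula for $\beta_{k,\ell}$ purely in terms of $L(\A)$.

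The main obstacle I expect is Step 2's claim that, once $\overline B$ has a face $F$ spanning $X_m$, it has a unique face spanning each smaller flat, and that the pairs $(F,B)$ with $F\leq B$ are counted by the product of the two region counts. I would handle this via the bijection $\iota$ of \Cref{lem:flag_inductive}, together with the standard fact that chambers of $\A$ adjacent to a fixed face $F$ correspond bijectively to chambers of $\A_F$, and that the faces of dimension $\dim X$ in $X$ are the chambers of $\A^X$.
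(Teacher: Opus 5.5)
There is a genuine gap in Step 2. You claim that in a central arrangement every flat is the affine span of some face of every chamber, and this is false. Take $\A=U_{2,3}$, three lines $H_1,H_2,H_3$ through the origin in $\mathbb{R}^2$. Each chamber $B$ is a sector whose closure contains only two rays, lying on two of the lines, so the third line is the span of no face of $\overline B$.

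The correspondence you invoke, between faces of $\overline B$ above $F_m$ and faces of the chamber of $\A_{X_m}$ containing $B$, only shows that each flat $X\leq X_m$ is the span of \emph{at most} one such face. So the condition ``$\overline B$ has faces spanning $X_1,\dots,X_m$'' does not reduce to a condition on $X_m$ alone. The count $\#\Ch(\A^{X_m})\cdot\#\Ch(\A_{X_m})$ is therefore wrong as soon as the multichain contains two distinct flats.

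Concretely, in this example the only multichains of bidegree $(3,4)$ are $H_i\leq\{0\}$ for $i=1,2,3$. Your formula gives $3\cdot 1\cdot 6=18$. But each of the $6$ chambers carries exactly two flags $(F\geq\{0\})$ with $F$ a ray of $\overline B$, so $\beta_{3,4}(C_6)=12$, in agreement with Gu's $T_3(3,4)=12$.

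The idea can be repaired by counting chambers compatible with the whole chain. Let $Y_1<\cdots<Y_p$ be the distinct flats occurring in the multichain. The relevant chambers $B$ correspond bijectively to chains of faces $G_p<\cdots<G_1<B$ with $\operatorname{aff}G_j=Y_j$. Count these one step at a time:
\begin{itemize}
\item faces spanning $Y_p$ are the chambers of $\A^{Y_p}$;
\item faces above $G_{j+1}$ spanning $Y_j$ are the chambers of $(\A_{Y_{j+1}})^{Y_j}$, whose intersection poset is $[Y_j,Y_{j+1}]$;
\item chambers above $G_1$ are the chambers of $\A_{Y_1}$.
\end{itemize}
This gives
\[
\#\Ch(\A^{Y_p})\cdot\prod_{j=1}^{p-1}\#\Ch\bigl((\A_{Y_{j+1}})^{Y_j}\bigr)\cdot\#\Ch(\A_{Y_1}),
\]
and each factor is a Zaslavsky count on an interval or principal filter of $L(\A)$.

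With this correction your direct argument becomes a legitimate alternative to the paper's proof. The paper instead shows $\#\Flag_{k,\alpha}(\A)=\#\Flag_{k,\varphi_\ast\alpha}(\A')$ by lexicographic induction on $(\#\A,\lvert\alpha\rvert)$. There, the localization bijection splits off the factor $\#\Ch(\A^X)$ for the flat $X$ cut out by $\supp(\alpha)$, and periodicity strips off the center. The compatibility of faces along the chain is thus handled recursively rather than through an explicit product.

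Two minor slips: with the reverse-inclusion order one has $\#\A_F=\#\{H\in\A\mid H\leq X\}$, and the flats in your multichains must differ from $\widehat 0$, since the faces are proper.
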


\begin{proof}
Let $\A'$ be another finite affine hyperplane arrangement and let
\(\varphi\colon L(\A)\xrightarrow{\sim}L(\A')\) be a poset isomorphism.  Since
hyperplanes are the atoms of the intersection poset, $\varphi$ induces a bijection
between $\A$ and $\A'$.  For $\alpha\in\NN^{\oplus\A}$, let
$\varphi_\ast\alpha\in\NN^{\oplus\A'}$ be the vector obtained by transporting its
coordinates along this bijection.  We prove, by lexicographic induction on
\((\#\A,\lvert\alpha\rvert)\), that
\[
    \#\Flag_{k,\alpha}(\A)
    =
    \#\Flag_{k,\varphi_\ast\alpha}(\A').
\]
This stronger statement implies the corollary after summing over all vectors of total
length $\ell$.

If $\alpha=\zero$, then the only flags have $k=0$, and their number is
$\#\Ch(\A)$.  Zaslavsky's theorem \cite{Zaslavsky1975} expresses this chamber
number in terms of $L(\A)$, so the assertion holds.

Assume $\alpha\neq\zero$ and put $S=\supp(\alpha)$.  First suppose that
$S=\A$.  Centrality is equivalent to $L(\A)$ having a greatest element, and,
in the central case, $r=\rank\A$ is the graded rank of that element.  Thus both
centrality and $r$ are determined by the intersection poset.  If $\A$ is noncentral,
then no face $F$ satisfies $\A_F=\A$;
by \Cref{lem:flag_support}, both flag sets in question are empty.  If $\A$ is central,
then so is $\A'$, with the same rank $r$, and \Cref{lem:flag_periodicity} gives
\[
\Flag_{k,\alpha}(\A)
\cong
\Flag_{k-r,\alpha-\one}(\A),
\qquad
\Flag_{k,\varphi_\ast\alpha}(\A')
\cong
\Flag_{k-r,\varphi_\ast(\alpha-\one)}(\A').
\]
If $k<r$, both right-hand flag sets are empty by convention.  If $k\geq r$, the
second component of the induction parameter has decreased, so the desired equality
follows from the induction hypothesis.

Now suppose that $S\subsetneq\A$.  Whether there exists a flat $X\in L(\A)$
with $\A_X=S$ is determined by $L(\A)$.  If no such flat exists, then
\Cref{lem:flag_support} shows that both flag sets are empty.  Otherwise let $X$ be
that flat and put $X'=\varphi(X)$.  Faces $F$ with $\A_F=S$ are precisely the
chambers of the restriction $\A^X$.  For each such face, the chambers $B$ satisfying
$F<B$ are in bijection with the chambers of the localization $\A_X$, via
$B\mapsto B_X$.  Applying \Cref{lem:flag_inductive} for each pair $(F,B)$ gives
\[
    \#\Flag_{k,\alpha}(\A)
    =
    \#\Ch(\A^X)\,
    \#\Flag_{k,\alpha|_{S}}(\A_X).
\]
The analogous formula holds for $\A'$ and $X'$.  The intersection posets
$L(\A_X)$ and $L(\A^X)$ are respectively the interval
$[\widehat{0},X]$ and the principal filter $L(\A)_{\geq X}$, and similarly for
$\A'$.  Hence $\varphi$ identifies the corresponding localization and restriction
posets.  Zaslavsky's theorem gives
$\#\Ch(\A^X)=\#\Ch((\A')^{X'})$, while
$\#\A_X=\#S<\#\A$ allows the induction hypothesis to be applied to the
localizations.  The displayed products are therefore equal.

Finally, the atom bijection induced by $\varphi$ identifies the sets of vectors
$\alpha$ of any fixed total length $\ell$.  By \Cref{cor:closed_finite},
\[
\beta_{k,\ell}(\Tope(\A))
=
\sum_{\substack{\alpha\in \NN^{\oplus\A}\\\lvert\alpha\rvert=\ell}}\#\Flag_{k,\alpha}(\A),
\]
and the same formula holds for $\A'$.  Thus the magnitude Betti numbers are
determined by $L(\A)$.
\end{proof}

\begin{corollary}[Homological reciprocity {\cite[Conjecture 7.1 (6)]{Koizumi_Liu}}]\label{reciprocity}
Let $\A$ be a central hyperplane arrangement of rank $r$, and put $n=\#\A$.
Then
\[
        \MH_{k,\ell}^{\circ}(\Tope(\A))
        \cong
        \MH_{k-r,\ell-n}(\Tope(\A)),
\]
where the right-hand side is interpreted as zero for $k<r$ or $\ell<n$.
\end{corollary}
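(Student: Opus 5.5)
The plan is to deduce the reciprocity from the face-flag description in \Cref{cor:closed_finite} together with the periodicity bijection for flags. By \Cref{cor:closed_finite}(2),(3), it suffices to construct, for each chamber $B$, a bijection
\[
\Flag_{k,\ell}^\circ(\A)_B\;\cong\;\Flag_{k-r,\ell-n}(\A)_B,
\]
and then take the disjoint union over $B\in\Ch(\A)$. Both sides are free abelian groups on the respective flag sets, so such a bijection gives the isomorphism. This also settles the vanishing convention for $k<r$ or $\ell<n$.

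To build the bijection, I would first decompose by length vector. We have $\Flag_{k,\ell}^\circ(\A)_B=\coprod_{\alpha}\Flag_{k,\alpha}(\A)_B$, where $\alpha$ runs over $\alpha\in\NN^{\oplus\A}$ with $|\alpha|=\ell$ and $\supp(\alpha)=\A$, that is, $\alpha\geq\one$. Since $\A$ is central of rank $r$, \Cref{lem:flag_periodicity} gives bijections $\Flag_{k,\alpha}(\A)_B\cong\Flag_{k-r,\alpha-\one}(\A)_B$ for each such $\alpha$. The map $\alpha\mapsto\alpha-\one$ is a bijection from $\{\alpha\geq\one,\ |\alpha|=\ell\}$ onto $\{\beta\in\NN^{\oplus\A},\ |\beta|=\ell-n\}$, because $n=\#\A$ is finite for a central arrangement. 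Summing over $\alpha$ therefore yields
\[
\coprod_{|\beta|=\ell-n}\Flag_{k-r,\beta}(\A)_B=\Flag_{k-r,\ell-n}(\A)_B .
\]
If $\ell<n$, there is no $\alpha\geq\one$ with $|\alpha|=\ell$, so the left side is empty. If $k<r$, the flag set on the right is empty by the convention in \Cref{lem:flag_periodicity}.

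Alternatively, the same argument can be run directly on homology. The interior magnitude homology decomposes as $\bigoplus_{\alpha\geq\one,|\alpha|=\ell}\MH_{k,\alpha}(\Tope(\A))$, since the $\A$-supported chain complex is the direct sum of the $\alpha$-summands with full support. Applying \Cref{thm:periodicity} summand by summand, and summing over $B$, gives the isomorphism without passing through flags.

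The only point needing care, and the one I expect to be the main obstacle, is checking that the $\A$-supported complex really splits as the direct sum of the $\MC_{\ast,\alpha}$ with $\alpha\geq\one$. This holds because the boundary preserves length vectors, as in \Cref{lem:nonspecial_subcomplex}. The edge case $\A=\varnothing$ is trivial: there $r=n=0$ and both sides are $\MH_{k,\ell}(\Tope(\A))$.
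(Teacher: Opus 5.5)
Your proposal is correct and follows the paper's proof essentially verbatim. You reduce via \Cref{cor:closed_finite} to a bijection $\Flag^\circ_{k,\ell}(\A)_B\cong\Flag_{k-r,\ell-n}(\A)_B$ for each chamber, decompose by full-support length vectors $\alpha$, and combine \Cref{lem:flag_periodicity} with the shift $\alpha\mapsto\alpha-\one$. Your alternative, applying \Cref{thm:periodicity} directly to the $\alpha$-summands of the interior homology, is also valid and does not need the localization theorem.
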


\begin{proof}
By \Cref{cor:closed_finite}, it suffices to show that
\[
        \Flag_{k,\ell}^{\circ}(\A)_B
        \cong
        \Flag_{k-r,\ell-n}(\A)_B
\]
for any $B\in \Ch(\A)$.
The left-hand side can be decomposed as
\[
        \Flag_{k,\ell}^{\circ}(\A)_B
        =
        \coprod_{\substack{|\alpha|=\ell\\ \supp(\alpha)=\A}}
        \Flag_{k,\alpha}(\A)_B.
\]
If $\ell<n$, there is no vector $\alpha\in\NN^{\oplus \A}$ with $|\alpha|=\ell$ and $\supp(\alpha)=\A$, so this is empty.
Assume $\ell\geq n$.
Then the assignment $\alpha\mapsto \alpha-\one$ is a bijection from the set of vectors $\alpha\in\NN^{\oplus \A}$ with $|\alpha|=\ell$ and $\supp(\alpha)=\A$ to the set of all vectors $\beta\in\NN^{\oplus \A}$ with $|\beta|=\ell-n$.
Applying \Cref{lem:flag_periodicity} to each summand gives
\[
        \Flag_{k,\alpha}(\A)_B
        \cong
        \Flag_{k-r,\alpha-\one}(\A)_B.
\]
Summing over all $\alpha$ with $\supp(\alpha)=\A$ gives the stated isomorphism.
\end{proof}

\subsection{Interpretation via Stanley--Reisner rings}\label{sec:stanley_reisner}
Let $K$ be a field and let $\Delta$ be a simplicial complex on a vertex set $V$.
The \emph{Stanley--Reisner ideal} of $\Delta$ is the squarefree monomial ideal
\[
    I_\Delta
    =
    \biggl(
        \prod_{v\in\sigma}x_v
        \mathrel{\Big|}
        \sigma\subseteq V\text{ finite},\ \sigma\notin\Delta
    \biggr)
    \subseteq K[x_v\mid v\in V],
\]
and the \emph{Stanley--Reisner ring} of $\Delta$ is
\[
    K[\Delta]:=K[x_v\mid v\in V]/I_\Delta.
\]
For a poset $P$, the Stanley--Reisner ring of its order complex $\Delta P$ can be written as
\[
    K[\Delta P]
    =
    K[x_p\mid p\in P]
    \big/
    (x_px_{p'}\mid p,p'\in P\text{ are incomparable}).
\]

Fix an affine hyperplane arrangement $\A$ and a chamber $B\in\Ch(\A)$, and set
\[
    P_B:=\Face(\A)_{<B},
    \qquad
    R_{\A,B}:=K[\Delta P_B].
\]
Endow $R_{\A,B}$ with the $\NN^2$-grading determined by
\[
    \deg(x_F)=(\rank(\A_F),\#\A_F).
\]
The defining ideal of $R_{\A,B}$ is bihomogeneous for this grading.

\begin{proposition}\label{prop:stanley_reisner}
Let $\A$ be an affine hyperplane arrangement such that $\Tope(\A)$ is of finite type, and let $B\in \Ch(\A)$.
For every $k,\ell\geq0$, we have
\[
    \dim_{K}(R_{\A,B})_{k,\ell}
    =
    \#\Flag_{k,\ell}(\A)_B
    =
    \rank\MH_{k,\ell}(\Tope(\A))_{\bullet\to B}.
\]
Consequently, the bigraded Hilbert series of $R_{\A,B}$ satisfies
\[
    \operatorname{Hilb}(R_{\A,B};u,q)
    :=
    \sum_{k,\ell\geq0}
    \dim_{K}(R_{\A,B})_{k,\ell}u^kq^\ell
    =
    \sum_{k,\ell\geq0}
    \rank\MH_{k,\ell}(\Tope(\A))_{\bullet\to B}u^kq^\ell.
\]
\end{proposition}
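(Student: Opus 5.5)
The second equality is \Cref{thm:closed}(2) together with finiteness of $\Flag_{k,\ell}(\A)_B$, so the work is the first equality: we exhibit a $K$-basis of $(R_{\A,B})_{k,\ell}$ indexed by $\Flag_{k,\ell}(\A)_B$. The plan is to use the standard monomial basis of the Stanley--Reisner ring of an order complex. Since $I_{\Delta P_B}$ is generated by the products $x_px_{p'}$ over incomparable pairs, a monomial $\prod_F x_F^{e_F}$ is nonzero in $R_{\A,B}$ exactly when its support is a chain of $P_B$. The images of these monomials form a $K$-basis of $R_{\A,B}$, because a monomial ideal has as quotient basis the monomials not lying in it.

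Next, we match such monomials with face flags. A monomial whose support is a chain $G_1>\cdots>G_s$ in $P_B$, with exponents $e_1,\dots,e_s\geq1$, corresponds to the weakly decreasing sequence in which $G_j$ is repeated $e_j$ times. Conversely, a face flag $(F_1\geq\cdots\geq F_m)$ with $F_i\in\Face(\A)_{<B}$ gives the monomial $x_{F_1}\cdots x_{F_m}$, and its support is a chain since the $F_i$ are totally ordered. These two maps are mutually inverse bijections between nonzero standard monomials and face flags relative to $B$. The empty flag corresponds to the monomial $1$.

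Under this bijection, bidegree is additive on both sides. The monomial $x_{F_1}\cdots x_{F_m}$ has degree $\sum_i(\rank\A_{F_i},\#\A_{F_i})=(\rank\mathscr F,\ell(\mathscr F))$. Hence $(R_{\A,B})_{k,\ell}$ has basis indexed by $\Flag_{k,\ell}(\A)_B$.

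The only delicate point is that $P_B$ may be infinite in the affine case, so we must check that each graded piece is finite-dimensional; this is where the finite-type hypothesis enters. Every proper face $F<B$ has $\#\A_F\geq1$, so a flag of bidegree $(k,\ell)$ has at most $\ell$ entries, each with $\#\A_{F_i}\leq\ell$. The faces of $\overline B$ with $\#\A_F\leq\ell$ correspond to facial intervals $\Ch(\A)_F\ni B$, each containing a chamber at distance $\#\A_F\leq\ell$ from $B$. There are therefore finitely many such faces by finiteness of balls, and so $\Flag_{k,\ell}(\A)_B$ is finite. The Hilbert series identity then follows by summing over all $(k,\ell)$.
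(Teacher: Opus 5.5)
Your proposal is correct and follows the paper's proof. Both use the standard monomial basis of $R_{\A,B}$, match chain-supported monomials with face flags by listing the variables in weakly decreasing order, use additivity of the bidegree, and invoke \Cref{thm:closed} for the second equality. Your explicit finiteness check via finite balls is a valid addition that the paper leaves implicit; it also follows from \Cref{thm:closed}, since $\MH_{k,\ell}(X)_{\bullet\to b}$ is finitely generated when $X$ is of finite type.
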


\begin{proof}
The residue classes of the monomials not contained in $I_{\Delta P_B}$ form a $K$-basis of $R_{\A,B}$.
Such a monomial has chain support, so its variables can be arranged uniquely in weakly decreasing order as
\[
    x_{F_1}x_{F_2}\cdots x_{F_m},
    \qquad
    F_1\geq F_2\geq\cdots\geq F_m,
\]
where repetitions are allowed.
This gives a bijection between the monomial basis of $R_{\A,B}$ and the face flags relative to $B$; the monomial $1$ corresponds to the empty face flag.
For the corresponding face flag $\mathscr F=(F_1\geq\cdots\geq F_m)$, we have
\[
    \deg(x_{F_1}\cdots x_{F_m})
    =
    \left(
        \sum_{i=1}^m\rank(\A_{F_i}),
        \sum_{i=1}^m\#\A_{F_i}
    \right)
    =
    (\rank(\mathscr F),\ell(\mathscr F)).
\]
Thus the first equality follows, and the second is \Cref{thm:closed}.
\end{proof}

\begin{remark}
    When $\A$ is finite, $\Ch(\A)$ and each $P_B$ are finite, and \eqref{vertexdecomp} and \Cref{prop:stanley_reisner} give
    \[
        \sum_{k,\ell\geq0}
        \rank\MH_{k,\ell}(\Tope(\A))u^kq^\ell
        =
        \sum_{B\in\Ch(\A)}
        \operatorname{Hilb}(R_{\A,B};u,q).
    \]
\end{remark}

\subsection{Coxeter arrangements}
In the case of a Coxeter arrangement, the magnitude homology of the tope graph can be described in terms of the Coxeter system.

Let $(W,S)$ be a Coxeter system of finite or affine type.
A subset $T\subseteq S$ is called \emph{spherical} if the corresponding standard parabolic subgroup $W_T=\langle T\rangle$ is finite.

\begin{lemma}\label{lem:spherical}
    Let $(W,S)$ be a Coxeter system of finite or affine type.
    Let $\A$ be the associated Coxeter arrangement with base chamber $B$.
    Let $H_s$ denote the hyperplane corresponding to $s\in S$.
    Then there exists an order-reversing bijection
    \[
    \{\varnothing\neq T\subseteq S\mid \text{spherical}\}\xrightarrow{\sim}\Face(\A)_{<B};\quad T\mapsto F_T:=\overline{B}\cap \bigcap_{t\in T} H_t.
    \]
    Moreover, the localized arrangement $\A_{F_T}$ can be identified with the Coxeter arrangement associated to $(W_T,T)$.
    In particular, we have $\rank(\A_{F_T})=\#T$ and $\#\A_{F_T}=\ell(w_0^T)$, where $w_0^T$ is the longest element in the finite Coxeter group $W_T$.
\end{lemma}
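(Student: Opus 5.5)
The plan is to combine the standard theory of Coxeter complexes (Bourbaki, Humphreys) with the local structure of the arrangement at a face of $\overline B$. First recall that $\overline B$ is a simplicial cone in the finite case and a simplex in the affine case. In both cases its walls are exactly the hyperplanes $H_s$, $s\in S$, and the linear functionals defining these walls form a basis in the finite case. In the affine case they are affinely independent, except that the full set of $\#S$ walls has empty common intersection with $\overline B$.

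Step 1 is the description of faces. A face of $\overline B$ is determined by which walls contain it. For a subset $T\subseteq S$ we put $F_T=\overline B\cap\bigcap_{t\in T}H_t$, understood as the relatively open face. This is nonempty exactly when the walls in $T$ meet on $\overline B$. The standard criterion (Humphreys, Thm.~1.12 and its affine analogue) says that the stabilizer in $W$ of a point $x\in\overline B$ is the parabolic subgroup generated by $\{s\in S: x\in H_s\}$, and every such stabilizer is finite because the action is proper. Conversely, for a spherical $T$ the subgroup $W_T$ is finite and fixes a point. For the affine case we would argue directly: $F_T\neq\varnothing$ iff $T\neq S$, and proper subsets of an irreducible affine $S$ are spherical. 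For reducible affine systems we would treat the components separately, or simply use the stabilizer argument. Distinct $T$ give distinct faces, since the zero set of $F_T$ among the walls is $T$; the inclusion $T\subseteq T'$ iff $F_{T'}\leq F_T$ gives order reversal. Excluding $T=\varnothing$ corresponds to excluding $B$ itself, so the map lands in $\Face(\A)_{<B}$ and is bijective.

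Step 2 identifies the localization. The hyperplanes of $\A$ containing $F_T$ are the reflecting hyperplanes of reflections fixing a relative interior point $x$ of $F_T$. These reflections are exactly the reflections lying in $\mathrm{Stab}(x)=W_T$. Hence $\A_{F_T}$ is the set of reflecting hyperplanes of $W_T$, which is the Coxeter arrangement of $(W_T,T)$, translated so that it is centered at $x$. Its chamber containing $B$ has walls $H_t$, $t\in T$.

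Step 3 gives the numerics. The rank of $\A_{F_T}$ equals the codimension of its center. That center is $\bigcap_{t\in T}H_t$, since all reflections of $W_T$ are generated by $T$. Its codimension is $\#T$ by the linear independence of the wall normals. The number of hyperplanes equals the number of reflections in $W_T$, and by the standard fact about finite Coxeter groups this is $\ell(w_0^T)$.

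The main obstacle is Step 1 in the affine case: we must justify carefully that nonemptiness of $F_T$ is equivalent to sphericity, uniformly for reducible systems. The stabilizer theorem handles this cleanly, so that is the route we would take.
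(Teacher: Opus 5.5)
Your Steps 2 and 3 are the paper's argument. You take $x$ in the relative interior of $F_T$, use $W_x=W_T$, identify $\A_{F_T}$ with the hyperplanes of the reflections lying in $W_T$, and count them as $\ell(w_0^T)$. The paper makes one fact explicit, via Davis' Lemma 4.2.3: the reflections of $W$ lying in $W_T$ are exactly the $W_T$-conjugates of $T$. You get the same fact from finite reflection group theory applied to $W_T$, using that the chamber of $\A_{F_T}$ containing $B$ has walls $H_t$, $t\in T$. That is fine, but it is where this fact enters. For the sphericity criterion in Step 1 the paper does not use stabilizers. It cites the identification of the nerve $L(W,S)$ with the boundary of the dual polytope of $\overline B$.

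The route you say you would take for Step 1 proves only one direction. The stabilizer theorem gives: if $F_T\neq\varnothing$, then $W_T\subseteq W_x$ is finite. For the converse, ``$W_T$ is finite and fixes a point'' only shows that the affine subspace $\bigcap_{t\in T}H_t$ is nonempty, not that it meets $\overline B$.

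A uniform repair goes as follows.
Choose $p\in B$ and let $c$ be the barycenter of the finite orbit $W_Tp$. Then $c$ is fixed by $W_T$, so $c\in H_t$ for every $t\in T$. For $s\in S\setminus T$, let $f_s$ define $H_s$ and be positive on $B$. For $w=t_1\cdots t_k\in W_T$, every hyperplane separating $B$ from $wB$ is crossed by the gallery $B,t_1B,t_1t_2B,\dots$. Hence it is the hyperplane of a reflection in $W_T$. It cannot be $H_s$, because $S\cap W_T=T$. Thus $f_s(wp)>0$ for all $w\in W_T$, so $f_s(c)>0$ and $c\in\overline B\cap\bigcap_{t\in T}H_t$.

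Alternatively, your direct argument works once stated correctly. When $W$ is reducible, $\overline B$ is a product of simplices (possibly times a simplicial cone), not a single simplex. Then $F_T\neq\varnothing$ if and only if $T\cap S_i\neq S_i$ for every irreducible affine factor $S_i$. The stabilizer theorem makes every such $T$ spherical, while a $T$ containing some $S_i$ is not spherical, since $W_{S_i}$ is infinite. The linear independence of wall normals you use in Step 3 also needs justifying for spherical $T$. Either this product structure gives it, or the fact that the $H_t$, $t\in T$, bound a chamber of the finite reflection group $W_T$ (so their normals are simple roots).
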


\begin{proof}
    If $(W,S)$ is of finite type, $\overline B$ is a simplicial cone and every subset of $S$ is spherical, so its faces are precisely the sets $F_T$.
    If $(W,S)$ is of affine type, the nerve $L(W,S)$, whose simplices are the nonempty spherical subsets of $S$, is isomorphic to the boundary of the dual polytope of $\overline B$; see \cite[Example~7.1.4 and Corollary~8.2.10]{Davis2008}.
    Consequently, in either case, $F_T$ is a nonempty face of $\overline B$ if and only if $T$ is spherical.
    Moreover,
    \[
        T(F):=\{s\in S\mid F\subseteq H_s\}
    \]
    satisfies $T(F_T)=T$, and hence $T\mapsto F_T$ is a bijection from the nonempty spherical subsets of $S$ to the proper faces of $B$.

    Fix a spherical subset $T$, set $F=F_T$, and choose a point $x$ in the relative interior of $F$.
    The walls of $B$ containing $x$ are exactly the $H_t$ with $t\in T$.
    The isotropy subgroup of $x$ is $W_x=W_T$; see \cite[\S5.2]{Davis2008}.
    Let
    \[
        R:=\{wsw^{-1}\mid w\in W,\ s\in S\}
        \quad\text{and}\quad
        R_T:=\{utu^{-1}\mid u\in W_T,\ t\in T\}
    \]
    be the sets of reflections of $(W,S)$ and $(W_T,T)$, respectively.  For
    $r\in R$, denote its reflecting hyperplane by $H_r$.  Since
    $x$ is in the relative interior of $F$, a hyperplane contains $F$ if and only if it contains $x$.
    Thus
    \[
    \begin{aligned}
        \A_F
        &=\{H_r\mid r\in R,\ r(x)=x\}\\
        &=\{H_r\mid r\in R\cap W_x\}\\
        &=\{H_r\mid r\in R\cap W_T\}.
    \end{aligned}
    \]
    The inclusion $R_T\subseteq R\cap W_T$ is immediate, while
    \cite[Lemma~4.2.3]{Davis2008} gives the reverse inclusion.
    Hence $R_T=R\cap W_T$ and consequently
    \[
        \A_F=\{H_r\mid r\in R_T\}.
    \]
    After translating $x$ to the origin and passing to the normal space of $F$, these are exactly the reflecting hyperplanes of the finite Coxeter system $(W_T,T)$.
    Therefore $\A_F$ has rank $\#T$ and $\#\A_F=\#R_T$.
    Applying \cite[Lemma~4.6.1(ii)]{Davis2008} to the finite Coxeter system
    $(W_T,T)$ gives $\#R_T=\ell(w_0^T)$.
    Hence $\#\A_{F_T}=\ell(w_0^T)$, as required.
\end{proof}

\begin{definition}
    Let $(W,S)$ be a Coxeter system of finite or affine type.
    A \emph{spherical flag} of $(W,S)$ is a sequence of nonempty spherical subsets
    \[
    \mathscr{F}=(T_1\subseteq T_2\subseteq\cdots\subseteq T_m),
    \]
    where $m\geq 0$.
    Define
    \[
    \rank(\mathscr{F}):=\sum_{i=1}^m \#T_i,\quad
    \ell(\mathscr{F}) :=\sum_{i=1}^m \ell(w_0^{T_i}).
    \]
    We write $\Flag_{k,\ell}(W,S)$ for the set of spherical flags $\mathscr{F}$ of $(W,S)$ with $\rank(\mathscr{F})=k$ and $\ell(\mathscr{F})=\ell$.
\end{definition}

\begin{corollary}\label{cor:coxeter}
    Let $(W,S)$ be a Coxeter system of finite or affine type, and let $\Cay(W,S)$
    be its Cayley graph.  Then, for any $b\in W$, we have
    \begin{align*}
    \MH_{k,\ell}(\Cay(W,S))_{\bullet\to b}
    &\cong
    \mathbb{Z}^{\oplus \Flag_{k,\ell}(W,S)},\\
    \MH_{k,\ell}(\Cay(W,S))
    &\cong
    \mathbb{Z}^{\oplus(\Flag_{k,\ell}(W,S)\times W)}.
    \end{align*}
\end{corollary}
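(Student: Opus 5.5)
The plan is to derive this directly from \Cref{thm:closed}(2) and \Cref{lem:spherical}, using \Cref{topecoxeter} to identify $\Tope(\A_W)$ with $\Cay(W,S)$.

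First, fix the identification of chambers with elements of $W$: the base chamber $B$ corresponds to $e$, and the chamber $wB$ corresponds to $w$. By \Cref{topecoxeter} this is a graph isomorphism $\Tope(\A_W)\cong\Cay(W,S)$, so it induces isomorphisms $\MH_{k,\ell}(\Cay(W,S))_{\bullet\to b}\cong\MH_{k,\ell}(\Tope(\A_W))_{\bullet\to bB}$. \Cref{thm:closed}(2) identifies the latter with $\ZZ^{\oplus\Flag_{k,\ell}(\A_W)_{bB}}$.

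It then suffices to construct a bidegree-preserving bijection $\Flag_{k,\ell}(\A_W)_{bB}\cong\Flag_{k,\ell}(W,S)$ for every $b\in W$. The element $b$ acts on $\mathbb R^d$ as an isometry preserving $\A_W$, so it induces an isomorphism of face posets $\Face(\A_W)_{<B}\cong\Face(\A_W)_{<bB}$. This isomorphism is compatible with localization: $\A_{bF}=b\A_F$, so rank and cardinality of the localized arrangement are preserved. Hence face flags relative to $bB$ correspond to face flags relative to $B$ with the same bidegree, and we may assume $b=e$.

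Now apply \Cref{lem:spherical}. The bijection $T\mapsto F_T$ is order-reversing, so a weakly decreasing chain of faces $F_{T_1}\geq\cdots\geq F_{T_m}$ corresponds exactly to a weakly increasing chain $T_1\subseteq\cdots\subseteq T_m$ of nonempty spherical subsets. Concretely, $F_{T}\geq F_{T'}$ iff $T\subseteq T'$: order-reversal gives one direction, and applying it to the inverse bijection gives the other. The lemma also gives $\rank(\A_{F_T})=\#T$ and $\#\A_{F_T}=\ell(w_0^T)$. Summing these over the flag shows that rank and length match, which proves the first isomorphism.

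The second isomorphism follows by summing over $b\in W$ using the decomposition \eqref{vertexdecomp}. This gives $\bigoplus_{b\in W}\ZZ^{\oplus\Flag_{k,\ell}(W,S)}=\ZZ^{\oplus(\Flag_{k,\ell}(W,S)\times W)}$.

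The only point requiring care is the equivariance in the reduction to $b=e$, namely that $w$ maps faces below $B$ to faces below $wB$ and preserves $\A_F$ up to the action. This holds because $W$ permutes the hyperplanes of $\A_W$, and I expect no real obstacle there.
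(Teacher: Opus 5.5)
Your proposal is correct and follows the paper's proof. You identify $\Cay(W,S)$ with $\Tope(\A_W)$, apply \Cref{thm:closed} together with the order-reversing bijection and bidegree formulas of \Cref{lem:spherical}, and then sum over $b$ via \eqref{vertexdecomp}. The only difference is cosmetic: you reduce to $b=e$ by transporting face flags along the $W$-action on the arrangement, while the paper invokes vertex-transitivity of the Cayley graph, which amounts to the same thing.
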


\begin{proof}
    Since $\Cay(W,S)$ is vertex-transitive, we may assume that $b$ is the identity
    element.  Let $\A$ be the Coxeter arrangement associated to $(W,S)$.  The Cayley
    graph $\Cay(W,S)$ is isomorphic to $\Tope(\A)$, so the first assertion follows
    from \Cref{thm:closed} and \Cref{lem:spherical}.  Taking the direct sum over all
    $b\in W$ in the vertex decomposition \eqref{vertexdecomp} gives the second
    assertion.
\end{proof}

\begin{remark}
    Let $(W,S)$ be a Coxeter system of finite or affine type.
    By \Cref{topecoxeter}, the unique magnitude weighting on $\Cay(W,S)$ is given by the reciprocal of the Poincar\'e series $\sum_{w\in W}q^{\ell(w)}$.
    Comparing this with \Cref{lem:weight} and \Cref{cor:coxeter}, we obtain a nontrivial identity
    \[
    \frac{1}{\sum_{w\in W}q^{\ell(w)}}=\sum_{\ell=0}^\infty\sum_{k=0}^\ell(-1)^k\#\Flag_{k,\ell}(W,S)\,q^\ell.
    \]
    This identity is mentioned by Dyer \cite{Dyer2024} as a formula obtained by recursively expanding the Steinberg formula. It is also observed there that this formula can be interpreted as a special case of a formula describing the Hilbert series of the Stanley--Reisner ring of the order complex of a lower Eulerian poset. Our result may be regarded as a homological refinement of this formula.
\end{remark}

\section{Examples}

This section illustrates the main theorems (\Cref{thm:closed}, \Cref{cor:closed_finite}) by computing the magnitude homology of several tope graphs.
In particular, the Cayley graphs of $S_n$ provide a new family of examples for which the magnitude homology can be computed completely.

\subsection{Path graphs}
Let $n\geq 1$.
Let $\A=\{H_1,\dots,H_n\}$ be the affine hyperplane arrangement in $\mathbb R$ given by
$H_i=\{i\}$ for $1\leq i\leq n$.
It has $n+1$ chambers $C_0,\dots,C_n$, where $H_i$ lies between $C_{i-1}$ and $C_i$.
The tope graph $\Tope(\A)$ is the path graph $P_{n+1}$.
We have $\Flag_{k,\ell}(\A)_{C_i}=\varnothing$ for $k\neq \ell$, and $\Flag_{0,0}(\A)_{C_i}=\{()\}$.
For $(k,\ell)=(m,m)$ with $m\geq 1$, we have
\[
\Flag_{m,m}(\A)_{C_i}=
\begin{cases}
    \{(\underbrace{H_1\geq\cdots\geq H_1}_m)\},&\text{if }i=0;\\
    \{(\underbrace{H_i\geq\cdots\geq H_i}_m),(\underbrace{H_{i+1}\geq\cdots\geq H_{i+1}}_m)\},&\text{if }1\leq i\leq n-1;\\
    \{(\underbrace{H_n\geq\cdots\geq H_n}_m)\},&\text{if }i=n.
\end{cases}
\]
By \Cref{cor:closed_finite}, we obtain
\[
        \beta_{k,\ell}(P_{n+1})=
        \begin{cases}
            0,&\text{if }k\neq\ell;\\
            n+1,&\text{if }k=\ell=0;\\
            2n,&\text{if }k=\ell\geq 1.
        \end{cases}
\]
This agrees with the computation of the magnitude homology for trees given in \cite[Corollary 6.8]{Hepworth2017}.

\subsection{Even cycles}
The magnitude homology of the cycle graph $C_n$ was determined by Gu \cite{Gu2018}.
Asao--Wakatsuki constructed explicit magnitude cycles forming a basis \cite[Theorem 8.2]{Asao_Wakatsuki}.
Here, we recover Gu's computation for even cycles by applying \Cref{cor:closed_finite}.

\begin{definition}
    Let $n\geq 3$.
    Define $T_n(k,\ell)$ for $k,\ell\in \ZZ$ as follows.
    \begin{enumerate}
        \item $T_n(k,\ell)=0$ if $k<0$ or $\ell<0$.
        \item $T_n(0,0)=2n$ and $T_n(1,1)=4n$.
        \item $T_n(k,\ell)=\max\{T_n(k-1,\ell-1), T_n(k-2,\ell-n)\}$ for $(k,\ell)\notin\{(0,0),(1,1)\}$.
    \end{enumerate}
\end{definition}

\begin{theorem}[{\cite[Theorem 4.8]{Gu2018}}]
    For any $n\geq 3$, $\MH_{k,\ell}(C_{2n})$ is a free abelian group of rank $T_n(k,\ell)$.
\end{theorem}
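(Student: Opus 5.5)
We realize $C_{2n}$ as the tope graph of a central arrangement and then count face flags. Take $\A$ to be a central arrangement of $n$ distinct lines through the origin in $\mathbb R^2$. It has $2n$ chambers arranged cyclically, and adjacent chambers are separated by exactly one line, so $\Tope(\A)\cong C_{2n}$. Here $\rank\A=2$ and $\#\A=n$.

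By \Cref{cor:closed_finite}, $\MH_{k,\ell}(C_{2n})$ is free of rank $\#\Flag_{k,\ell}(\A)=\sum_B\#\Flag_{k,\ell}(\A)_B$. By symmetry, each chamber $B$ contributes the same number, so it suffices to count flags relative to a single $B$ and multiply by $2n$.

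For a fixed chamber $B$, the proper faces of $\overline B$ are three:
\begin{itemize}
    \item the two rays $R_1,R_2$, each with $\A_{R_i}$ a single line, of bidegree $(1,1)$;
    \item the origin $O=F_\A$, of bidegree $(2,n)$, and $O<R_1,R_2$.
\end{itemize}
A face flag $F_1\geq\cdots\geq F_m$ is therefore of one of two shapes:
\begin{itemize}
    \item a string of copies of $O$ ($j\geq0$ of them);
    \item a nonempty string of copies of a single ray $R_i$ ($s\geq1$ of them), followed by $j\geq0$ copies of $O$.
\end{itemize}
The two rays are incomparable, so they cannot both appear. Hence the number $f(k,\ell)$ of flags of bidegree $(k,\ell)$ relative to $B$ is:
\begin{itemize}
    \item $1$ if $(k,\ell)=(2j,nj)$ (only $O$'s, including the empty flag at $j=0$);
    \item $2$ if $(k,\ell)=(s+2j,\,s+nj)$ with $s\geq1$;
    \item $0$ otherwise.
\end{itemize}
These cases do not overlap for $n\geq3$: writing $k=s+2j$, $\ell=s+nj$ gives $\ell-k=(n-2)j$, which determines $j$ and then $s$.

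It remains to check that $2n\,f(k,\ell)=T_n(k,\ell)$. The initial values match: $T_n(0,0)=2n$ and $T_n(1,1)=4n$. Negative indices give $0$ on both sides. For the recursion, consider $f(k-1,\ell-1)$ and $f(k-2,\ell-n)$:
\begin{itemize}
    \item If $(k,\ell)=(s+2j,s+nj)$ with $s\geq2$, then $f(k-1,\ell-1)=2$, and $f(k-2,\ell-n)$ is $2$ if $j\geq1$ and $0$ if $j=0$. The max is $2$, as required.
    \item If $s=1$, then $(k-1,\ell-1)=(2j,nj)$ gives $1$, while $f(k-2,\ell-n)=2$ for $j\geq1$. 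The max is $2$; the case $j=0$ is the initial value $(1,1)$.
    \item If $s=0$ and $j\geq1$, then $f(k-2,\ell-n)=1$. The other term is $f(2j-1,nj-1)$, and $(2j-1,nj-1)$ would need $s'-s''=\ldots$ — more precisely, $\ell-k=(n-2)j$ is unchanged, so $j'=j$ and $s'=-1$, which is impossible. That term is $0$, and the max is $1$.
    \item If $(k,\ell)$ is not of either form, then neither shifted pair is: shifting by $(1,1)$ preserves $\ell-k$ and lowers $s$ by one, while shifting by $(2,n)$ lowers $j$ by one. Both terms vanish, and so does the max.
\end{itemize}

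So $\MH_{k,\ell}(C_{2n})$ is free of rank $2n f(k,\ell)=T_n(k,\ell)$. The main obstacle is only this careful case bookkeeping; the one point that needs care is distinguishing $s=0$ from $s\geq1$ so that the maximum in the recursion picks up the correct value.
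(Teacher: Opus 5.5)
Your proposal is correct and follows the paper's proof. Both realize $C_{2n}$ as $\Tope(U_{2,n})$, apply \Cref{cor:closed_finite}, classify the face flags relative to each chamber as a run of copies of one bounding ray followed by copies of the origin (bidegree $(a+2b,a+nb)$), and match the resulting counts $2n$, $4n$, $0$ with $T_n$. Your explicit check of the recurrence, including the uniqueness of $j=(\ell-k)/(n-2)$ for $n\geq 3$, spells out a step the paper simply asserts.
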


\begin{proof}
    The even cycle $C_{2n}$ can be regarded as the tope graph of the hyperplane arrangement $U_{2,n}$ in $\mathbb{R}^2$ consisting of $n$ lines through the origin. 
    By \Cref{cor:closed_finite}, it suffices to show that $\#\Flag_{k,\ell}(U_{2,n})=T_n(k,\ell)$.
    The face poset of the central hyperplane arrangement $U_{2,n}$ can be described as follows.
    There are $2n$ chambers $C_1,\dots,C_{2n}$, $2n$ rank-one faces $F_1,\dots,F_{2n}$, and the center $\{0\}$.
    The partial order on these faces is generated by
    \[
    \{0\}\leq F_i,\quad F_i\leq C_i,\quad F_{i+1}\leq C_i,
    \]
    where $F_{2n+1}=F_1$.
    The bidegree of $F_i$ is $(1,1)$, and the bidegree of $\{0\}$ is $(2,n)$.
    Therefore a face flag in $U_{2,n}$ relative to $C_i$ is of the form
    \[
    (\underbrace{F_j\geq \cdots\geq F_j}_a
    \geq
    \underbrace{\{0\}\geq \cdots\geq \{0\}}_b),
    \]
    where $j\in\{i,i+1\}$ when $a>0$; if $a=0$, no choice of $j$ is present.
    Such a flag has bidegree
    \[
    (k,\ell)=(a+2b,a+nb).
    \]
    In particular, $\#\Flag_{k,\ell}(U_{2,n})$ is given by
    \[
    \#\Flag_{k,\ell}(U_{2,n})=\begin{cases}
        2n,&\text{if }(k,\ell)=(2b,nb)\text{ for some }b\geq 0;\\
        4n,&\text{if }(k,\ell)=(a+2b,a+nb)\text{ for some }a>0, b\geq 0;\\
        0,&\text{otherwise}.
    \end{cases}
    \]
    These cardinalities satisfy precisely the recurrence defining $T_n$.
    Thus $\#\Flag_{k,\ell}(U_{2,n})=T_n(k,\ell)$.
\end{proof}

\subsection{Cayley graph of $S_n$}
Let $n\geq 2$.
The Coxeter arrangement of type $A_{n-1}$ is also known as the braid arrangement $\Br_n$.
It is an arrangement in $\mathbb R^n$ defined by the hyperplanes $x_i=x_j$ for $1\le i<j\le n$.
The tope graph of $\Br_n$ is isomorphic to the Cayley graph of $S_n$ with respect to adjacent transpositions, which we denote by $\Cay(S_n,\adj)$.
For example, we have $\Cay(S_2,\adj)\cong K_2$ and $\Cay(S_3,\adj)\cong C_6$.
\Cref{fig:B_4} shows $\Cay(S_4,\adj)$; its vertices correspond to the $24$ chambers of $\Br_4$.

\begin{figure}[htbp]
\centering
\tdplotsetmaincoords{80}{115}
\begin{tikzpicture}[tdplot_main_coords, scale=1.0, line join=round, line cap=round]
\foreach \name/\x/\y/\z in {
  v0/0/-1/-2,
  v1/0/-1/ 2,
  v2/0/ 1/-2,
  v3/0/ 1/ 2,
  v4/0/-2/-1,
  v5/0/-2/ 1,
  v6/0/ 2/-1,
  v7/0/ 2/ 1,
  v8/-1/0/-2,
  v9/-1/0/ 2,
  v10/1/0/-2,
  v11/1/0/ 2,
  v12/-2/0/-1,
  v13/-2/0/ 1,
  v14/2/0/-1,
  v15/2/0/ 1,
  v16/-1/-2/0,
  v17/-1/ 2/0,
  v18/1/-2/0,
  v19/1/ 2/0,
  v20/-2/-1/0,
  v21/-2/ 1/0,
  v22/2/-1/0,
  v23/2/ 1/0}{
  \coordinate (\name) at (\x,\y,\z);
}
\foreach \a/\b in {
  v0/v4,  v0/v8,  v0/v10,
  v1/v5,  v1/v9,  v1/v11,
  v2/v6,  v2/v8,  v2/v10,
  v3/v7,  v3/v9,  v3/v11,
  v4/v16, v4/v18,
  v5/v16, v5/v18,
  v6/v17, v6/v19,
  v7/v17, v7/v19,
  v8/v12,
  v9/v13,
  v10/v14,
  v11/v15,
  v12/v20, v12/v21,
  v13/v20, v13/v21,
  v14/v22, v14/v23,
  v15/v22, v15/v23,
  v16/v20,
  v17/v21,
  v18/v22,
  v19/v23}{
  \draw[line width=0.6pt] (\a) -- (\b);
}
\foreach \v in {
  v0,v1,v2,v3,v4,v5,v6,v7,v8,v9,v10,v11,
  v12,v13,v14,v15,v16,v17,v18,v19,v20,v21,v22,v23}{
  \fill (\v) circle (1.3pt);
}
\end{tikzpicture}
\caption{Cayley graph $\Cay(S_4,\adj)$}
\label{fig:B_4}
\end{figure}

\begin{proposition}
For any $n\geq 2$, $\MH_{k,\ell}(\Cay(S_n,\adj))$ is a free abelian group of rank $n!N_n(k,\ell)$, where
\[
  N_n(k,\ell)
  =
  \#\biggl\{
  (e_1,\ldots,e_{n-1})\in \NN^{n-1}
  \biggm|
  \sum_{i=1}^{n-1} e_i=k,\ 
  \sum_{1\leq a<b\leq n}
  \min(e_a,e_{a+1},\ldots,e_{b-1})=\ell
  \biggr\}.
\]
\end{proposition}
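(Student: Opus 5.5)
We would apply \Cref{cor:coxeter} to the Coxeter system $(S_n,S)$ of type $A_{n-1}$, with $S=\{s_1,\dots,s_{n-1}\}$ the adjacent transpositions $s_i=(i\ i{+}1)$. This system is of finite type, so every subset of $S$ is spherical and $\Cay(S_n,\adj)\cong\Tope(\Br_n)$. \Cref{cor:coxeter} then gives
\[
\MH_{k,\ell}(\Cay(S_n,\adj))\cong\ZZ^{\oplus(\Flag_{k,\ell}(S_n,S)\times S_n)},
\]
which is free of rank $n!\,\#\Flag_{k,\ell}(S_n,S)$. It therefore suffices to construct a bijection between $\Flag_{k,\ell}(S_n,S)$ and the set counted by $N_n(k,\ell)$.

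The bijection sends a spherical flag $\mathscr F=(T_1\subseteq\cdots\subseteq T_m)$ to its multiplicity vector $e=(e_1,\dots,e_{n-1})$, where
\[
e_i:=\#\{j\mid s_i\in T_j\}.
\]
We check that this is a bijection onto $\NN^{n-1}$.
\begin{itemize}
\item Since the $T_j$ are nested, the flag is recovered from $e$: the level sets $\{s_i\mid e_i\geq t\}$ for $t=1,\dots,\max e$ are exactly $T_m\supseteq T_{m-1}\supseteq\cdots$, read from the top.
\item Repetitions of the $T_j$ are allowed and every $T_j$ is nonempty, so $m=\max_i e_i$. Every vector $e\in\NN^{n-1}$ arises, with $e=\zero$ corresponding to the empty flag.
\end{itemize}
Under this map $\rank(\mathscr F)=\sum_j\#T_j=\sum_i e_i$, so the first condition defining $N_n(k,\ell)$ matches.

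For the second condition we need $\ell(w_0^T)$ for a subset $T\subseteq S$. Decompose $T$ into maximal runs of consecutive generators. A run $\{s_a,\dots,s_{b-1}\}$ generates a copy of $S_{b-a+1}$ acting on $\{a,\dots,b\}$, whose longest element has length $\binom{b-a+1}{2}$. This equals the number of pairs $a\leq a'<b'\leq b$, that is, the number of pairs $a'<b'$ with $s_{a'},\dots,s_{b'-1}\in T$. Summing over runs gives
\[
\ell(w_0^T)=\#\{(a,b)\mid 1\leq a<b\leq n,\ \{s_a,\dots,s_{b-1}\}\subseteq T\}.
\]
This is equivalent to the statement that $\#\A_{F_T}$ counts the hyperplanes $x_a=x_b$ containing $F_T$, which is the form given in \Cref{lem:spherical}.

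Summing over the flag, we get
\[
\ell(\mathscr F)=\sum_{a<b}\#\{j\mid \{s_a,\dots,s_{b-1}\}\subseteq T_j\}.
\]
Because the $T_j$ are nested, the number of $j$ with $\{s_a,\dots,s_{b-1}\}\subseteq T_j$ equals $\min(e_a,\dots,e_{b-1})$. This matches the second condition and completes the bijection.

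The main step needing care is the length computation $\ell(w_0^T)=\sum_{\text{runs}}\binom{\text{run size}+1}{2}$. It rests on two standard facts: the parabolic subgroup $W_T$ is the direct product of the groups generated by its runs, and the longest element of $S_m$ has length $\binom m2$.
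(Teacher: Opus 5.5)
Your proposal is correct and follows essentially the same route as the paper. Both arguments reduce via \Cref{cor:coxeter} to counting spherical flags. Both then biject a flag with its multiplicity vector $e$ (recovering $T_j$ as a level set, with $m=\max_i e_i$), compute $\ell(w_0^T)$ by summing $\binom{\#R+1}{2}$ over the connected runs $R$ of $T$, and use nestedness to turn the count of $j$ with $\{s_a,\dots,s_{b-1}\}\subseteq T_j$ into $\min(e_a,\dots,e_{b-1})$.
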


\begin{proof}
Write $\adj=\{s_1,s_2,\dots,s_{n-1}\}$, where $s_j=(j\ j+1)\in S_n$.
By \Cref{cor:coxeter}, it suffices to show that $\#\Flag_{k,\ell}(S_n,\adj)=N_n(k,\ell)$.
An element of $\Flag_{k,\ell}(S_n,\adj)$ is a chain
\[
\mathscr{F}=(T_1\subseteq T_2\subseteq\cdots\subseteq T_m)
\]
of nonempty subsets of $\adj$.
Recall that
\[
    \rank(\mathscr{F})=\sum_{i=1}^m\#T_i,\quad \ell(\mathscr{F})=\sum_{i=1}^m\ell(w_0^{T_i}),
\]
where $w_0^{T_i}$ is the longest element of the subgroup $\langle T_i\rangle\subseteq S_n$.
If $T_i=R_1\sqcup \cdots \sqcup R_s$ is the decomposition into connected blocks, then we have
\[
    \ell(w_0^{T_i})=\sum_{j=1}^s\binom{\#R_j+1}{2}=\#\{(a,b)\mid 1\leq a< b\leq n,\ \{s_a,s_{a+1},\dots,s_{b-1}\}\subseteq T_i\}.
\]
Therefore $\ell(\mathscr{F})$ is given by
\[
    \ell(\mathscr{F})=\sum_{1\leq a<b\leq n} \#\{i\mid \{s_a,s_{a+1},\dots,s_{b-1}\}\subseteq T_i\}.
\]

Define $e_j=\#\{i\mid s_j\in T_i\}$ for $1\leq j\leq n-1$.
This gives a vector
$e=(e_1,\ldots,e_{n-1})\in \NN^{n-1}$.
Conversely, every such vector determines a unique spherical flag.  Indeed, if
$m=\max_j e_j$, then one recovers $T_i$ by
\[
  T_i=\{j\mid e_j\geq m-i+1\,\},
  \quad 1\leq i\leq m.
\]
The zero vector corresponds to the empty flag.
Hence spherical flags are in bijection with vectors in $\NN^{n-1}$.
Under this bijection, we have
\[
    \rank(\mathscr{F})
    =
    \sum_{j=1}^{n-1} e_j,
    \quad
    \ell(\mathscr{F})
    =
    \sum_{1\leq a<b\leq n}
    \min(e_a,e_{a+1},\ldots,e_{b-1}).
\]
This shows that $\#\Flag_{k,\ell}(S_n,\adj)=N_n(k,\ell)$.
\end{proof}

For example, the magnitude Betti numbers of $\Cay(S_4,\adj)$ for $0\leq k,\ell\leq 10$ are given in \Cref{tab:B4}.
These values come from the vectors
$(e_1,e_2,e_3)\in\NN^3$ in the proposition; for instance, the single vector
$(1,1,1)$ contributes to $(k,\ell)=(3,6)$.

\begin{table}[htbp]
\begin{tabular}{c|rrrrrrrrrrr}
$k\backslash \ell$
& 0 & 1 & 2 & 3 & 4 & 5 & 6 & 7 & 8 & 9 & 10 \\
\hline
0  & 24 & 0  & 0  & 0   & 0   & 0   & 0   & 0   & 0   & 0   & 0   \\
1  & 0  & 72 & 0  & 0   & 0   & 0   & 0   & 0   & 0   & 0   & 0   \\
2  & 0  & 0  & 96 & 48  & 0   & 0   & 0   & 0   & 0   & 0   & 0   \\
3  & 0  & 0  & 0  & 120 & 96  & 0   & 24  & 0   & 0   & 0   & 0   \\
4  & 0  & 0  & 0  & 0   & 144 & 96  & 48  & 72  & 0   & 0   & 0   \\
5  & 0  & 0  & 0  & 0   & 0   & 168 & 96  & 96  & 96  & 48  & 0   \\
6  & 0  & 0  & 0  & 0   & 0   & 0   & 192 & 96  & 96  & 168 & 96  \\
7  & 0  & 0  & 0  & 0   & 0   & 0   & 0   & 216 & 96  & 96  & 240 \\
8  & 0  & 0  & 0  & 0   & 0   & 0   & 0   & 0   & 240 & 96  & 96  \\
9  & 0  & 0  & 0  & 0   & 0   & 0   & 0   & 0   & 0   & 264 & 96  \\
10 & 0  & 0  & 0  & 0   & 0   & 0   & 0   & 0   & 0   & 0   & 288
\end{tabular}
\vspace{1em}
\caption{Magnitude Betti numbers of $\Cay(S_4,\adj)$ for $0\leq k,\ell\leq 10$}
\label{tab:B4}
\end{table}

\subsection{$n$-diagonal graphs that are not diagonal}

Let $G$ be a finite connected graph and $n\geq 1$.
We say that $G$ is \emph{$n$-diagonal} if
\[
\forall k\leq n,\quad \forall \ell>k,\quad \MH_{k,\ell}(G)=0.
\]
This means that the magnitude homology concentrates on the diagonal $k=\ell$ in the range $0\leq k\leq n$.
All finite graphs are $1$-diagonal \cite[Example 4.6]{Leinster2021}.
Here, we construct, for every $n\geq 1$, a graph whose magnitude homology is concentrated on the diagonal for $k\leq n$, but is not diagonal in all degrees.

Consider the generic central hyperplane arrangement $U_{n+1,n+2}$ in $\mathbb R^{n+1}$ (\Cref{topegeneric}).
Its tope graph $G_n=\Tope(U_{n+1,n+2})$ is isomorphic to the two-punctured hypercube $\{+,-\}^{n+2}\setminus\{(+,\dots,+),(-,\dots,-)\}$.
The face $\{0\}$ of $U_{n+1,n+2}$ is of bidegree $(n+1,n+2)$, and all other faces are of bidegree $(k,k)$ for some $k\leq n$.
Therefore, for any face flag $\mathscr{F}$, we have either $\rank(\mathscr{F})=\ell(\mathscr{F})$ or $\rank(\mathscr{F})\geq n+1$.
By \Cref{cor:closed_finite}, this shows that $G_n$ is $n$-diagonal.
On the other hand, the face flag $(\{0\})$ is of bidegree $(n+1,n+2)$, so we have $\MH_{n+1,n+2}(G_n)\neq 0$.
In particular, $G_n$ is not diagonal.

\section*{Acknowledgements}
The author thanks Ye Liu for valuable discussions during the early stages of this research, and Yasuhiko Asao and Masahiko Yoshinaga for helpful comments on a draft.

\printbibliography
\end{document}